\newtheorem{thm}{Theorem}[section]
\newtheorem{prop}[thm]{Proposition}
\newtheorem{lem}[thm]{Lemma}
\theoremstyle{remark}
\newtheorem{rem}[thm]{Remark}
\renewcommand{\le}{\leqslant}
\def\les{\lesssim}
\renewcommand{\ge}{\geqslant}
\renewcommand{\subset}{\subseteq}
\newcommand{\mcl}{\mathcal}
\newcommand{\E}{\mathbb{E}}
\newcommand{\cE}{\mathcal{E}}
\newcommand{\N}{\mathbb{N}}
\newcommand{\Ll}{\left}
\newcommand{\Rr}{\right}
\newcommand{\R}{\mathbb{R}}
\newcommand{\Z}{\mathbb{Z}}
\renewcommand{\P}{\mathbb{P}}
\newcommand{\td}{\tilde}
\newcommand{\eps}{\varepsilon}
\def\d{{\mathrm{d}}}
\renewcommand{\hom}{\mathrm{hom}}
\numberwithin{equation}{section}
\title[Pointwise two-scale expansion]{Pointwise two-scale expansion for parabolic equations with random coefficients}
\author{Yu Gu, Jean-Christophe Mourrat}
\address[Yu Gu]{Department of Mathematics, Building 380, Stanford University, Stanford, CA, 94305, USA}
\address[Jean-Christophe Mourrat]{ENS Lyon, CNRS, 46 allée d'Italie, 69007 Lyon, France}
\begin{document}
\begin{abstract}

We investigate the first-order correction in the homogenization of linear parabolic equations with random coefficients. In dimension $3$ and higher and for coefficients having a finite range of dependence, we prove a pointwise version of the two-scale expansion. A similar expansion is derived for elliptic equations in divergence form. 
The result is surprising, since it was not expected to be true without further symmetry assumptions on the law of the coefficients.

\bigskip

\noindent \textsc{MSC 2010:} 35B27, 35K05, 60G44, 60F05, 60K37.

\medskip

\noindent \textsc{Keywords:} quantitative homogenization, martingale, central limit theorem, diffusion in random environment.

\end{abstract}
\maketitle
%
%
%
%
%
%
%
%
\section{Introduction}
\label{s:intro}
\setcounter{equation}{0}

\subsection{Main result}

We are interested in parabolic equations in divergence form when $d\geq 3$:
\begin{equation}
\label{mainEq}
\left\{
\begin{array}{ll}
\displaystyle{\partial_t u_\eps(t,x,\omega)=\frac12\nabla\cdot (\tilde{a}(\frac{x}{\eps},\omega)\nabla u_\eps(t,x,\omega))   } &\text{on } \R_+\times \R^d,\\
\\
\displaystyle{u_\eps(0,x,\omega)=f(x) } & \text{on } \R^d,
\end{array}
\right.
\end{equation}
where $\omega\in \Omega$ denotes a particular random realization sampled from a probability space $(\Omega,\mathcal{F},\P)$, the function $f$ is bounded and smooth, and $\tilde{a}: \R^d\times \Omega\to \R^{d\times d}$ is a random field of symmetric matrices satisfying the uniform ellipticity condition
\begin{equation}
C^{-1}|\xi|^2\leq \xi^T \tilde{a}(x,\omega)\xi\leq C|\xi|^2.
\end{equation}
Standard homogenization theory shows that under the assumptions of stationarity and ergodicity on the random field $\tilde{a}(x,\omega)$, there exists a deterministic matrix $\bar{A}$ such that $u_\eps$ converges to the solution $u_{\hom}$ of a ``homogenized'' equation:
\begin{equation}
\label{limitEq}
\left\{
\begin{array}{ll}
\displaystyle{\partial_t u_{\hom}(t,x)=\frac12\nabla\cdot (\bar{A}\nabla u_{\hom}(t,x))   } &\text{on } \R_+\times \R^d,\\
\\
\displaystyle{u_{\hom}(0,x)=f(x) } & \text{on } \R^d.
\end{array}
\right.
\end{equation}

The goal of this paper is to further analyze the difference between $u_\eps(t,x,\omega)$ and $u_{\hom}(t,x)$, in a \emph{pointwise} sense. We assume that the coefficients $\td{a}$ have a short range of dependence (more precisely, that they can be written as a local function of a homogeneous Poisson point process). For each fixed $(t,x)$, we show that 
\begin{equation}
\label{e:main-prev}
u_\eps(t,x,\omega)-u_{\hom}(t,x)
=  \eps \nabla u_{\hom}(t,x)\cdot \td{\phi}(x/\eps,\omega)+o(\eps),
\end{equation}
where $\td{\phi}$ is the (stationary) corrector, and where $o(\eps)/\eps\to 0$ in $L^1(\Omega)$.

\subsection{Context}

There is a large body of literature on stochastic homogenization, starting from the work of Kozlov \cite{kozlov1979averaging} and Papanicolaou-Varadhan \cite{papanicolaou1979boundary} on divergence form operators. Their results show that as the correlation length of the random coefficients goes to zero, the operator converges in a certain sense to the one with constant coefficients. The qualitative convergence essentially comes from an ergodic theorem. In order to provide convergence rates, a quantification of ergodicity is required. The first quantitative result was given by Yurinskii \cite{yurinskii1986averaging}, where an algebraic rate was obtained. Other suboptimal results were obtained in \cite{mourrat2011variance}. Caffarelli and Souganidis considered nonlinear equations, and also derived an error estimate \cite{caffarelli2010rates}. 

Optimal results have started appearing only very recently, beginning with the groundbreaking work of Gloria and Otto \cite{gloria2011optimal,gloria2012optimal} and Gloria, Neukamm and Otto \cite{gloria2013quantification,gloria2014optimal}. Further developments include \cite{marahrens2013annealed,gloria2014quantitative,armstrong2,armstrong3,gno-reg,armstrong4}.

We would like in particular to draw the reader's attention to the results in \cite{gloria2014optimal}. There, linear elliptic equations in divergence-form on the $d$-dimensional torus~$\mathbb{T}$ are considered (so that there is no boundary layer), and a two-scale expansion is proved, in the sense that
$$
\Ll\|u_\eps(x,\omega) -u_{\hom}(x) - \eps \nabla u_{\hom}(x) \cdot \td{\phi}(x/\eps,\omega) \Rr\|_{H^1(\mathbb{T})} = O(\eps),
$$
with obvious notation for $u_\eps$ and $u_{\hom}$, and where $O(\eps)/\eps$ is bounded in $L^2(\Omega)$ uniformly over $\eps$. (Striclty speaking, the equations studied there are discrete, and a minor modification in the definition of $u_\eps$ is necessary in order to suppress the discretization error.) This statement is probably best understood as the summary of two estimates: one on $u_\eps$, and one on its gradient:
$$
\Ll(\int_{\mathbb{T}} \Ll|u_\eps(x,\omega) -u_{\hom}(x)\Rr|^2 \, dx\Rr)^{1/2} = O(\eps),
$$
$$
\Ll(\int_{\mathbb{T}} \Ll|\nabla u_\eps(x,\omega) - \nabla u_{\hom}(x) - \nabla \td{\phi}(x/\eps,\omega) \nabla u_{\hom}(x)\Rr|^2 \, dx\Rr)^{1/2} = O(\eps).
$$
In particular, it does not follow from this result that 
\begin{equation}
\label{first-order}
u_\eps(x,\omega) - u_{\hom}(x) - \eps \nabla u_{\hom}(x) \cdot \td{\phi}(x/\eps,\omega) = o(\eps).
\end{equation}
In fact, one of us (JCM) started this project with the belief that the expansion \eqref{first-order} was \emph{wrong} in general; that in order for it to be true, an additional symmetry property of the coefficients had to be assumed, a good candidate being the invariance of the law of the coefficients under the transformation $z \mapsto -z$. Even the weaker fact that 
\begin{equation}
\label{e:weak}
\E\{u_\eps(x,\omega) \} - u_{\hom}(x) = o(\eps)
\end{equation}
seemed a priori unlikely to be true in general. For the most part, this belief was based on three observations:
\begin{enumerate}
\item Numerical evidence, in the discrete setting, indicates that $\eps^{-1}\big(\E\{u_\eps(x,\omega)\} - u_{\hom}(x)\big) $ does not converge to $0$ for ``generic'' periodic environments, see \cite[Section~4.4.2 and Figure~15]{cemracs};
\item A simple toy model was proposed in \cite[Remark~4.4]{cemracs} to ``explain''  that $\eps^{-1}\big(\E\{u_\eps(x,\omega) \} - u_{\hom}(x)\big)$ should be of order $1$ in general: when summing i.i.d.\ random variables, the rate of convergence in the central limit theorem is generically of order $\eps$ when $\eps^{-2}$ random variables are summed; but it is of order $\eps^2$ when the law of the random variables is invariant under the transformation $z \mapsto -z$;
\item In the regime of small ellipticity contrast, Conlon and Fahim showed that the $\E\{u_\eps(x,\omega)\} - u_{\hom}(x) = O(\eps^2)$ when the law of the coefficients is invariant under the transformation $z \mapsto -z$, but they only show that it is $O(\eps)$ in general; see \cite[Theorem~1.2, Proposition~A.1, Remark~8 and Lemma~A.2]{conlon}.
\end{enumerate}

Despite these strong indications to the contrary, our result \eqref{e:main-prev} on the parabolic equation implies the corresponding result for the elliptic equation. That is, the expansion \eqref{first-order} is actually \emph{true in general} (i.e.\ without it being necessary to assume that the law of the coefficients is invariant under a transformation such as $z \mapsto -z$).

Why are there so convincing arguments to the contrary? It seems to us that the core of the matter is that the foregoing observations (1-3) all concern \emph{discrete} equations (i.e.\ where the underlying space is $\Z^d$), while our proof of \eqref{e:main-prev} and \eqref{first-order} applies to \emph{continuous} equations. Interestingly, we do not know how to prove our result (or the weaker statement \eqref{e:weak}) in the discrete setting without making use of an assumption such as the invariance of the law of the coefficients under the transformation $z \mapsto -z$.

\medskip

Finally, we would like to point out that while it is fairly easy to pass from a result on the parabolic equation to one on the elliptic equation, the converse does not seem to be possible. In fact, we are not aware of any previous ``two-scale expansion'' result for parabolic equations.

\subsection{The probabilistic approach}

From a probabilistic point of view, homogenizing a differential operator with random coefficients corresponds to proving an invariance principle for a random motion in random environment. Kipnis and Varadhan have developed a general central limit theorem for additive functionals of reversible Markov processes \cite{kipnis1986central}. A large class of random motions in random environment can be analyzed by following their approach, using also the idea of the ``medium seen from the moving particle'' (see \cite{komorowski2012fluctuations} and the references therein). The proof is based on a martingale decomposition and an application of the martingale central limit theorem (CLT).

In order to make this argument quantitative, two ingredients are necessary. One is a quantitative version of the martingale CLT; the other is a quantitative estimate on the speed of convergence to equilibrium of the medium seen from the particle. This route was already pursued in \cite{mourrat2012berry,mourrat2012kantorovich,gu2014fluctuations}. The quantitative martingale CLT developped in \cite{mourrat2012kantorovich} for general martingales was further explored in \cite{gu2014fluctuations}. It was shown there that by focusing on continuous martingales, one can express the first-order correction in the CLT in simple terms involving the quadratic variation of the martingale. This will provide us with a suitable quantitative martingale CLT. In addition, we will also need to assert that the process of the environment seen from the particle converges to equilibrium sufficiently fast. This question was first investigated in \cite{mourrat2011variance}, and we will borrow from there the idea that it is sometimes sufficient to understand the convergence to equilibrium of the environment as seen by a standard Brownian motion (independent of the environment). Furthermore, we will rely crucially on moment bounds on the corrector and on the gradients of the Green function recently obtained in \cite{gloria2014improved, gloria2014quantitative}. All these tools will enable us to identify a deterministic first-order correction to the expansion in \eqref{e:main-prev}, which we will finally show to be zero.

\subsection{Other relevant work}

The probabilistic approach is particularly well-suited for obtaining pointwise information such as \eqref{e:main-prev}. While such pointwise results are relatively rare, the precise behavior of more global random quantities has received considerable attention. In particular, a central limit theorem for the averaged energy density was derived in \cite{rossignol, nolen2011normal,biskup2014central}. The large-scale correlations and then the scaling limit of the corrector are investigated in \cite{mourrat2014correlation, mourrat2014CLT}. A comparable study of the scaling limit of the fluctuations of $u_\eps$ was performed in \cite{gu-mourrat}. We stress however that this result only characterizes the \emph{fluctuations} of $u_\eps$, but not the \emph{bias} $\E[u_\eps] - u_\hom$. The desire to understand the typical size of the bias (cf.\ \eqref{e:weak}) is what initiated our study. 


For other types of equations, e.g. a deterministic operator perturbed by a highly oscillatory random potential, fluctuations around homogenized limits have been analyzed in different contexts \cite{figari1982mean,bal2008central,bal2010homogenization,gu2014fluctuations}, see a review \cite{bal2013review}. From a probabilistic perspective, it corresponds to a random motion independent of the random environment.


\subsection{Organization and notation}

The rest of the paper is organized as follows. We make assumptions on the random field $\tilde{a}(x,\omega)$ and state the main results in Section \ref{s:assRe}. Then we present a standard approach to diffusions in random environments in Section \ref{s:setup}. Some key estimates of the correctors and the Green functions are contained in Section \ref{s:pCo}. The proof of the main results are presented in Sections \ref{s:r}, \ref{s:m} and \ref{s:ell}. 

We write $a\les b$ when $a\leq Cb$ with a constant $C$ independent of $\eps,t,x$. The normal distribution with mean $\mu$ and variance $\sigma^2$ is denoted by $N(\mu,\sigma^2)$, and $q_t(x)$ is the density of $N(0,t)$. The Fourier transform is defined by $\hat{f}(\xi)=\int_{\R^d}f(x)e^{-i\xi\cdot x}dx$. We will have two independent probability spaces with the associated expectations denoted by $\E,\E_B$ respectively. The expectation in the product probability space is then denoted by $\E\E_B$.

\section{Assumptions and main results}
\label{s:assRe}

Let $\mcl{M}$ be an arbitrary metric space equipped with its Borel $\sigma$-algebra, and let $\mu$ be a $\sigma$-finite measure on $\mcl{M}$. We let $\omega$ be a Poisson point process on $\mcl{M} \times \R^d$ with intensity measure $\d \mu(m) \, \d x$. We think of $\omega$ as an element of the probability space $(\Omega,\mcl{F},\P)$, where $\Omega$ is the collection of countable subsets of $\mcl{M} \times \R^d$, and $\mcl{F}$ is the smallest $\sigma$-algebra that makes the maps
$$
\left\{
\begin{array}{lll}
\Omega & \to & \N \cup \{+\infty\} \\
\omega & \mapsto & \mathrm{Card} (\omega \cap A) 
\end{array}
\right.
$$
measurable, for every measurable $A \subset \mcl{M} \times \R^d$. For a construction of such Poisson point processes, we refer to \cite[Section~2.5]{kingman}. For any measurable $S\subset \R^d$, we denote the $\sigma$-algebra generated by the Poisson point process restricted to $\mathcal{M}\times S$ by $\mathcal{F}_S$.

The group of translations of $\R^d$ can be naturally lifted to the space $\Omega$ by defining, for every $x \in \R^d$,
$$
\tau_x \omega = \{(m,x+z) \ : \ (m,z) \in \omega \} = \omega + (0,x).
$$
%
%
It is a classical result that $\{\tau_x,x\in\R^d\}$ satisfies the following properties:
\begin{enumerate}
\item \emph{Measure-preserving}: $\P \circ \tau_x=\P$.
\item \emph{Ergodicity}: if a measurable set $A \subset \Omega$ is such that for every $x \in \R^d$, $A = \tau_x(A)$, then $\P(A)\in\{0,1\}$.
\item \emph{Stochastic continuity}: for any $\delta>0$ and $f$ bounded measurable, 
\begin{equation*}
\lim_{h\to 0}\P\{|f(\tau_h\omega)-f(\omega)|\geq \delta\}=0.
\end{equation*}
\end{enumerate}

We denote the inner product and norm on $L^2(\Omega)$ by $\langle.,.\rangle$ and $\|.\|$ respectively, and define the operator $T_x$ on $L^2(\Omega)$ as $T_x f:=f\circ \tau_{-x}$.
The family $\{T_x,x\in \R^d\}$ forms a $d$-parameter group of unitary operators on $L^2(\Omega)$. Stochastic continuity implies that the group is strongly continuous, and ergodicity asserts that a function $f$ is constant if and only if $T_xf=f$ for all $x\in \R^d$. 

Let $\{D_k,k=1,\ldots,d\}$ be the generators of the group $\{T_x,x\in \R^d\}$. They correspond to differentiations in $L^2(\Omega)$ in the canonical directions denoted by $\{e_k,k=1,\ldots,d\}$. 
The gradient is then denoted by $D:=(D_1,\ldots,D_d)$, and we define the Sobolev space $H^1(\Omega)$ as the completion of smooth functions under the norm $\|f\|_{H^1}^2:=\langle f,f\rangle+\sum_{k=1}^d\langle D_kf,D_kf\rangle$.

Any function $f$ on $\Omega$ can be extended to a stationary random field $\tilde{f}(x,\omega):=f(\tau_{-x}\omega)$. The random coefficients $\tilde{a}(x,\omega)$ appearing in \eqref{mainEq} are given by $\tilde{a}(x,\omega)=a(\tau_{-x}\omega)$ for some measurable $a:\Omega\to \R^{d\times d}$. We further make the following assumptions on $a$:

\begin{enumerate}

\item \emph{Uniform ellipticity and smoothness}. For every $\omega \in \Omega$, $a(\omega)$ is a symmetric matrix satisfying 
\begin{equation}
\label{uniEll}
C^{-1}|\xi|^2\leq \xi^T a(\omega)\xi\leq C|\xi|^2
\end{equation} for some constant $C>0$.  Each entry $\tilde{a}_{ij}(x,\omega)=a_{ij}(\tau_{-x}\omega)$ has $C^2$ sample paths whose first and second order derivatives are uniformly bounded in $(x,\omega)$.

\item \emph{Local dependence}. 
There exists $C > 0$ such that for all $x\in \R^d$, $\tilde{a}(x,\omega)=a(\tau_{-x}\omega)$ is $\mathcal{F}_{\{y:|y-x|\leq C\}}$-measurable for some constant $C>0$.

\end{enumerate}

The coefficient field $a(\omega)$ can for instance be constructed by choosing a ``shape function'' $g: \mcl{M} \times \R^d \to E$ for some measurable vector space $E$ (e.g.\ the space of symmetric matrices) and a ``cut-off function'' $F : E \to \R^{d\times d}$ (that can be used to ensure uniform ellipticity), and letting
$$
a(\omega) = F\Ll(\sum_{(m,z) \in \omega} g(m,z)\Rr).
$$
The condition of local dependence on $a$ is guaranteed if $g(m,z)$ is non-zero only for $z$ varying in a compact set. As we will see below, the Poisson structure is only used to establish the covariance estimate \eqref{covEs1} and then prove Propositions~\ref{p:corCo} and \ref{p:corQu}. Although the law of the Poisson point process is invariant under transformations such as $z \mapsto -z$, this is of course not the case in general for the coefficient field $\td{a}(x,\omega)$ itself.



The following is our main theorem.
\begin{thm}
\label{t:main}
Assume $f\in \mathcal{C}_c^\infty(\R^d)$. For every $(t,x)$, there exists $C_\eps\to 0$ in $L^1(\Omega)$ such that 
\begin{equation}
\label{e:main}
u_\eps(t,x,\omega)-u_{\hom}(t,x)
=  \eps \nabla u_{\hom}(t,x)\cdot \phi(\tau_{-x/\eps}\omega)+\eps C_\eps.
\end{equation}
Here $\phi=(\phi_{e_1},\ldots,\phi_{e_d})$, where $\phi_{e_k}$ is the (zero-mean) stationary corrector in the canonical direction $e_k$.
\end{thm}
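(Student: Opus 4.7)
The plan is to start from the probabilistic (Feynman--Kac) representation
\begin{equation*}
u_\eps(t,x,\omega) = \EEo\bigl[f(\eps Y_{t/\eps^2})\bigr],
\end{equation*}
where $Y$ is the diffusion with generator $\frac12 \nabla\cdot(\td a(\cdot,\omega)\nabla)$ started at $Y_0=x/\eps$ under the quenched law $\PPo$. Introducing the extended (stationary) corrector $\td\phi_{e_k}(y,\omega) := \phi_{e_k}(\tau_{-y}\omega)$, the map $y \mapsto y + \td\phi(y,\omega)$ is $\td a$-harmonic, so Itô's formula yields the martingale decomposition
\begin{equation*}
Y_s - Y_0 = M_s - \bigl[\td\phi(Y_s,\omega) - \td\phi(Y_0,\omega)\bigr],
\end{equation*}
where $M_s$ is a continuous quenched martingale whose predictable quadratic variation is asymptotically $\bar A s$. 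Substituting into the Feynman--Kac formula and Taylor-expanding $f$ around $x+\eps M_{t/\eps^2}$ gives
\begin{equation*}
u_\eps(t,x,\omega) = \EEo\bigl[f(x+\eps M_{t/\eps^2})\bigr] + \eps\,\td\phi(x/\eps,\omega)\cdot\EEo\bigl[\nabla f(x+\eps M_{t/\eps^2})\bigr] - \eps\,\EEo\bigl[\nabla f(x+\eps M_{t/\eps^2})\cdot\td\phi(Y_{t/\eps^2},\omega)\bigr] + R_\eps,
\end{equation*}
with $R_\eps$ a quadratic remainder controlled by the second moments of $\td\phi$ at the points $x/\eps$ and $Y_{t/\eps^2}$.

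Each piece would then be analyzed separately. The first expectation is handled by a quantitative continuous-martingale CLT in the spirit of \cite{mourrat2012kantorovich, gu2014fluctuations}: its deviation from $u_\hom(t,x)$ is driven by a norm of $\eps^2\la M\ra_{t/\eps^2} - \bar A t$, which is controlled by quantifying the speed of convergence of the environment seen from the particle to its invariant distribution. Following the strategy of \cite{mourrat2011variance}, one bounds this mixing by a comparison with an \emph{independent} Brownian motion and combines it with the stationary moment bounds on $\phi$ and the Green-function gradient estimates of \cite{gloria2014improved,gloria2014quantitative}. The second piece already isolates the claimed leading term $\eps\,\nabla u_\hom(t,x)\cdot\phi(\tau_{-x/\eps}\omega)$ up to the replacement of $\EEo[\nabla f(x+\eps M_{t/\eps^2})]$ by $\nabla u_\hom(t,x)$, itself a martingale-CLT error. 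The third (``endpoint'') piece is precisely where the mixing of $\tau_{-Y_s}\omega$ is used: the invariant average of $\td\phi(Y_s,\omega)$ is $\E[\phi]=0$, so quantifying this mixing turns the term into $o(\eps)$ in $L^1(\Omega)$. Finally $R_\eps = O(\eps^2)$ by the same stationary moment bounds on $\phi$.

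The main obstacle is the one flagged in the introduction: the martingale-CLT error and the endpoint-corrector term each threaten to contribute a deterministic $O(\eps)$ bias, and no general principle forces these biases to cancel. The delicate step will therefore be to identify these biases in closed form via the quantitative martingale CLT (expressing the correction in terms of $\la M\ra$ and the initial distribution) and to verify that the resulting deterministic quantity actually vanishes in our continuous Poissonian setting; this is presumably where the Poisson structure must be used through a covariance/integration-by-parts identity of the sort underlying the corrector fluctuation estimates, and it is exactly the ingredient known to be absent in the discrete $\Z^d$ setting without a $z\mapsto -z$ symmetry. Once the deterministic bias has been shown to vanish, the upgrade from convergence in probability to convergence in $L^1(\Omega)$ is automatic, supplied by the uniform integrability from the stationary moment bounds on $\phi$ and $\nabla\phi$.
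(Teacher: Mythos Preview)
Your decomposition is essentially the paper's, up to the cosmetic choice of Taylor-expanding $f$ directly rather than working in Fourier and expanding $e^{iR_t^\eps}$. The handling of the endpoint corrector term and the quadratic remainder is correct as you describe. The genuine gap is in your identification of \emph{why} the deterministic $O(\eps)$ bias from the martingale-CLT part vanishes, and consequently in your diagnosis of what separates the continuous from the discrete case.

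The paper's mechanism is as follows. After applying the refined continuous-martingale CLT of \cite{gu2014fluctuations} (Proposition~\ref{prop:3rdQM}), the leading bias is governed by $\eps^2\int_0^{t/\eps^2}\psi_\xi(\omega_s)\,ds$ with $\psi_\xi=(\xi+D\phi_\xi)^T a(\xi+D\phi_\xi)-\xi^T\bar A\xi$. One then runs a \emph{second} Kipnis--Varadhan decomposition on $\psi_\xi$, introducing a second-level corrector $\Psi_{ij}$ (solving $-L\Psi_{ij}=\psi_{ij}$ in the appropriate sense). The residual deterministic bias is then
\[
c_{ijk}=\tfrac12\,\E\bigl[(D\Psi_{ij})^T a(e_k+D\phi_{e_k})\bigr],
\]
and this vanishes by a purely algebraic integration by parts: it equals $\lim_{\lambda\to 0}\langle\Psi_{\lambda,ij},\,L\phi_{\lambda,e_k}+e_k\cdot b\rangle=\lim_{\lambda\to 0}\langle\Psi_{\lambda,ij},\,\lambda\phi_{\lambda,e_k}\rangle=0$, using only the corrector equation and $\lambda\|\Psi_{\lambda,ij}\|\,\|\phi_{\lambda,e_k}\|\to 0$. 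No Poisson structure enters here; the Poisson assumption is used only for the quantitative decorrelation estimates (Propositions~\ref{p:corCo}--\ref{p:corQu}) that guarantee $\psi_\xi$ has finite asymptotic variance so that $D\Psi_{ij}$ exists.

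Correspondingly, the obstruction in the discrete setting is \emph{not} that such a covariance identity fails. The identity $c_{ijk}=0$ would hold verbatim on $\Z^d$. What fails is the refined martingale CLT itself: with jumps, Proposition~\ref{prop:3rdQM} does not deliver the correction in the clean form $\tfrac12 f''(M_\tau)(\langle M\rangle_t-\sigma^2 t)$, and one cannot reduce the bias to the $c_{ijk}$ in the first place (see Remark after Proposition~\ref{prop:3rdQM} in the paper). So your proposed route of searching for a Poisson-specific cancellation would not find one, and would miss both the second-level corrector and the actual reason continuity matters.
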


\begin{rem}
The existence of $\phi$ is given by Theorem~\ref{t:bdCo}. An examination of the proof reveals that the smoothness condition on $f$ can be relaxed. It suffices to assume that sufficiently many weak derivatives of $f$ belong to $L^2(\R^d)$ (i.e., the Fourier transform $\hat{f}$ is such that $\hat{f}(\xi)(1+|\xi|)^n$ is integrable for some large $n$).
\end{rem}

\begin{rem}
\label{}
It would be interesting to quantify the convergence of $\E[C_\eps]$ to $0$. We discuss a possible approach to show that $\E[C_\eps] \lesssim \sqrt{\eps}$ (up to logarithmic corrections) in Remark~\ref{r:exponent-improvement} below. 
\end{rem}

\begin{rem}
\label{}
To the best of our knowledge, Theorem~\ref{t:main} was not known even in the periodic case. We explain how to adapt our methods to this setting in Section~\ref{s:per}.
\end{rem}

Theorem~\ref{t:main} gives, for every $(t,x)$, the existence of some $C_\eps = C_\eps(t,x)$ such that \eqref{e:main} holds. Our proof actually shows more. In particular, for every $T >0$,  $\sup_{x \in \R^d, t \le T} \E\{|C_\eps(t,x)|\}$ tends to $0$ as $\eps$ tends to $0$, and we also obtain some control on the growth of this quantity as $T$ grows. Therefore, we can derive a similar result for elliptic equations, which we now describe more precisely.

Let $U_\eps(x,\omega)$ and $U_{\hom}(x)$ solve the following equations on $\R^d$ respectively 
\begin{eqnarray}
U_\eps(x,\omega)-\frac12\nabla\cdot(\tilde{a}(\frac{x}{\eps},\omega)\nabla U_\eps(x,\omega))&=&f(x),\\
U_{\hom}(x)-\frac12\nabla \cdot (\bar{A} \nabla U_{\hom}(x))&=&f(x).
\end{eqnarray}
\begin{thm}
\label{t:mainE}
Under the same assumption as in Theorem~\ref{t:main} and for every $x$, there exists $\tilde{C}_\eps\to 0$ in $L^1(\Omega)$ such that
\begin{equation*}
U_\eps(x,\omega)-U_{\hom}(x)
=\eps \nabla U_{\hom}(x)\cdot \phi(\tau_{-x/\eps}\omega)+\eps \tilde{C}_\eps.
\end{equation*}
\end{thm}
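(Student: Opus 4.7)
The plan is to recover the elliptic solutions as exponentially weighted time averages of the parabolic ones, and then to integrate the conclusion of Theorem~\ref{t:main} in time. Integration by parts in $t$ shows that if $u_\eps$ denotes the solution of \eqref{mainEq} with initial datum $f$, then $V_\eps(x,\omega):=\int_0^\infty e^{-t} u_\eps(t,x,\omega)\,\d t$ satisfies
$$V_\eps - \tfrac{1}{2}\nabla\cdot\bigl(\td{a}(x/\eps,\omega)\nabla V_\eps\bigr)=f,$$
so by uniqueness $V_\eps=U_\eps$. The same argument with $\td{a}$ replaced by $\bar A$ gives $U_{\hom}(x)=\int_0^\infty e^{-t}u_{\hom}(t,x)\,\d t$, and in particular $\nabla U_{\hom}(x)=\int_0^\infty e^{-t}\nabla u_{\hom}(t,x)\,\d t$ by differentiation under the integral.

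Subtracting these two identities and inserting the two-scale expansion \eqref{e:main} inside the time integral yields
$$U_\eps(x,\omega)-U_{\hom}(x)=\eps\,\nabla U_{\hom}(x)\cdot\phi(\tau_{-x/\eps}\omega)+\eps\int_0^\infty e^{-t}C_\eps(t,x)\,\d t,$$
so the theorem reduces to showing that $\td C_\eps:=\int_0^\infty e^{-t}C_\eps(t,x)\,\d t$ tends to $0$ in $L^1(\Omega)$.

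For this last step I would invoke the strengthened form of Theorem~\ref{t:main} announced in the paper: its proof actually yields, for every $T>0$, that $\sup_{x\in\R^d,\,t\le T}\E\{|C_\eps(t,x)|\}\to 0$ as $\eps\to 0$, together with at-most-polynomial control of this quantity as $T$ grows. Given such bounds, one splits the integral at a threshold $T_\eps\to\infty$ chosen slowly enough that $\sup_{x,\,t\le T_\eps}\E\{|C_\eps(t,x)|\}$ still vanishes; the tail $[T_\eps,\infty)$ is then absorbed by the exponential weight $e^{-t}$ against the at-most-polynomial growth of $\E\{|C_\eps(t,x)|\}$, and Fubini converts this into an $L^1(\Omega)$-statement for $\td C_\eps$. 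The main obstacle is therefore not the Laplace-type reduction, which is cleanly classical, but tracing back through the proof of Theorem~\ref{t:main} to confirm that the $L^1(\Omega)$-smallness of $C_\eps(t,x)$ really is uniform in $x$ and grows only slowly in $t$, so that this split argument is valid.
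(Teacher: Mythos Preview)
Your proposal is correct and follows essentially the same route as the paper: write $U_\eps$ and $U_{\hom}$ as Laplace transforms of the parabolic solutions, integrate the expansion \eqref{e:main} in time, and show that $\int_0^\infty e^{-t} C_\eps(t,x)\,\d t \to 0$ in $L^1(\Omega)$. The only difference is in this last step: the paper extracts from Propositions~\ref{p:conR} and~\ref{p:conM} the explicit uniform bound $\E\{|C_\eps(t,x)|\}\le C(1+t)$, which is integrable against $e^{-t}$, so dominated convergence applies directly and your split-at-$T_\eps$ argument is unnecessary.
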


\section{Diffusions in random environments}
\label{s:setup}
\setcounter{equation}{0}

In this section, we present a standard approach to diffusions in random environments, including the process of the medium seen from the particle, corrector equations and the martingale decomposition. A complete introduction can be found in \cite[Chapter 9]{komorowski2012fluctuations}, so we do not present the details. 


For every fixed $\omega\in \Omega, x\in\R^d$ and $\eps>0$, we define the diffusion process $X_t^\omega$ on $\R^d$, starting from $x/\eps$,  by the It\^o stochastic differential equation
\begin{equation}
\label{disde}
dX_t^\omega=\tilde{b}(X_t^\omega,\omega)dt+\tilde{\sigma}(X_t^\omega,\omega)dB_t.
\end{equation}
Here, the drift $\tilde{b}=(\tilde{b}_1,\ldots,\tilde{b}_d)$ is defined by $\tilde{b}_i=\frac12\sum_{j=1}^d \partial_{x_j}\tilde{a}_{ji}$, the diffusion matrix is $\tilde{\sigma}=\sqrt{\tilde{a}}$, and the driving force $B_t=(B_t^1,\ldots,B_t^d)$ is a standard $d$-dimensional Brownian motion built on a different probability space $(\Sigma,\mathcal{A},\P_B)$ with the associated expectation $\E_B$. (Although we keep it implicit in the notation, note that the starting point of the diffusion depends on $\eps$.)

The medium or environment seen from the particle is the process taking values in $\Omega$ defined by
\begin{equation}
\omega_s:=\tau_{-X_s^\omega}\omega.
\end{equation}
The following lemma is taken from \cite[Proposition~9.8]{komorowski2012fluctuations}.
\begin{lem}
\label{lem:proEn}
$(\omega_s)_{s \ge 0}$ is a Markov process that is reversible and ergodic with respect to the measure $\P$. Its generator is given by $$L:=\frac12\sum_{i,j=1}^d D_i(a_{ij}D_j).$$
\end{lem}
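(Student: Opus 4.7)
The plan is to unpack the SDE~\eqref{disde} via It\^o's formula applied to observables of the form $\omega \mapsto f(\tau_{-X^\omega_s}\omega)$, read off the generator, and then separately verify invariance, reversibility, and ergodicity of $\P$ as the stationary measure on $\Omega$.

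First, for a smooth local $f:\Omega \to \R$, set $\tilde f(x,\omega) := f(\tau_{-x}\omega)$. By the definition of the generators $D_k$ of the group $\{T_x\}$, one has $\partial_{x_k}\tilde f(x,\omega) = \widetilde{D_k f}(x,\omega)$, and similarly for the second derivatives and for $\partial_{x_k}\tilde a_{ij}$. Applying It\^o's formula to $\tilde f(X^\omega_s,\omega)$ with the coefficients $\tilde b_i = \tfrac12\sum_j \partial_{x_j}\tilde a_{ji}$ and the quadratic variation $(\d X^\omega_s)^i(\d X^\omega_s)^j = \tilde a_{ij}\,\d s$, and then evaluating at $\omega_u = \tau_{-X^\omega_u}\omega$, the finite-variation part of $f(\omega_s)$ becomes
\[
\int_0^s \Ll[\tfrac12 \sum_{i,j} (D_j a_{ij})(\omega_u) (D_i f)(\omega_u) + \tfrac12 \sum_{i,j} a_{ij}(\omega_u)(D_i D_j f)(\omega_u)\Rr]\d u.
\]
Using the product rule $D_j(a_{ij} D_i f) = (D_j a_{ij})D_i f + a_{ij}D_j D_i f$, the commutativity $D_iD_j = D_jD_i$ (inherited from $T_xT_y = T_yT_x$), and the symmetry of $a$, the integrand collapses to $Lf(\omega_u)$. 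Hence $f(\omega_s) - \int_0^s Lf(\omega_u)\,\d u$ is a $\P_B$-martingale, which gives the Markov property of $(\omega_s)$ with generator~$L$.

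Next, for invariance and reversibility, I would exploit the fact that each $D_k$ is antisymmetric on $L^2(\Omega)$ (a standard consequence of $\{T_x\}$ being a strongly continuous unitary group). Integration by parts together with the symmetry of $a$ yields, for smooth local $f,g$,
\[
\la Lf, g\ra = -\tfrac12 \sum_{i,j}\la a_{ij} D_j f, D_i g\ra,
\]
which is symmetric in $(f,g)$. Taking $g \equiv 1$ gives $\la Lf,1\ra = 0$, so $\P$ is infinitesimally invariant for the semigroup $P_s$ generated by $L$; the symmetry of the bilinear form upgrades to $\la P_s f, g\ra = \la f, P_s g\ra$, giving both preservation of $\P$ and reversibility.

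Finally, for ergodicity, any $f \in L^2(\Omega)$ with $P_s f = f$ for all $s \ge 0$ satisfies $\la Lf,f\ra = 0$, i.e.\ $\la aDf,Df\ra = 0$. Uniform ellipticity~\eqref{uniEll} then forces $Df \equiv 0$, which means $T_x f = f$ for every $x \in \R^d$, and ergodicity of $\P$ under the translations $\{\tau_x\}$ stated just before the lemma yields $f = \E[f]$ a.s. The step requiring the most care is justifying the integration-by-parts identity beyond smooth cylinder functions; this is handled by approximating in $H^1(\Omega)$ and using the $C^2$-smoothness and uniform boundedness of the coefficients assumed in Section~\ref{s:assRe}.
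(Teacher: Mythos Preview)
Your argument is a reasonable and essentially correct sketch of the standard proof: identify the generator via It\^o's formula, obtain symmetry of the Dirichlet form from the antisymmetry of the $D_k$, and deduce ergodicity by combining uniform ellipticity with the ergodicity of the translation group. One small point worth tightening is the Markov property itself: solving the martingale problem for $L$ on a dense class is not quite the same as establishing that $(\omega_s)$ is Markov; the cleanest route is to use the Markov property of $X^\omega$ together with the identity $\omega_{s+u} = \tau_{-(X^\omega_{s+u}-X^\omega_s)}\omega_s$ and the fact that the law of the increment depends on the environment only through $\omega_s$.

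That said, the paper does not prove this lemma at all. It is simply quoted from \cite[Proposition~9.8]{komorowski2012fluctuations}, with no argument given. So there is no ``paper's proof'' to compare against; your write-up supplies what the paper outsources to the reference.
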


The diffusively rescaled process $\eps X_{t/\eps^2}^\omega$ starts from $x$, with an infinitesimal generator given by
\begin{equation}
\label{infiGe}
L_\eps^\omega:=\frac12\sum_{i,j=1}^d\tilde{a}_{ij}(\frac{x}{\eps},\omega)\partial_{x_i}\partial_{x_j}+\frac{1}{\eps}\tilde{b}(\frac{x}{\eps},\omega)\cdot \nabla=\frac12\nabla\cdot (\tilde{a}(\frac{x}{\eps},\omega)\nabla).
\end{equation}
Hence, we can express the solution to \eqref{mainEq} as an average with respect to the diffusion process $\eps X_{t/\eps^2}^\omega$, i.e., for every fixed $\omega\in \Omega, t>0, x\in \R^d, \eps>0$, we have 
\begin{equation}
\label{proRe}
u_\eps(t,x,\omega)=\E_B\{f(\eps X_{t/\eps^2}^\omega)\}.
\end{equation}
With the above probabilistic representation, the problem reduces to an analysis of the asymptotic behavior of $\eps X_{t/\eps^2}^\omega$. In view of \eqref{disde}, the process can be written as
\begin{eqnarray*}
\eps X_{t/\eps^2}^\omega &=& x+\eps\int_0^{t/\eps^2}\tilde{b}(X_s^\omega,\omega) \, ds+\eps\int_0^{t/\eps^2}\tilde{\sigma}(X_s^\omega,\omega) \, dB_s\\
&=& x+ \eps \int_0^{t/\eps^2}b(\omega_s) \, ds+\eps\int_0^{t/\eps^2}\sigma(\omega_s) \, dB_s.
\end{eqnarray*}
It is clear that $b=(b_1,\ldots,b_d)$ with $b_i=\frac12\sum_{j=1}^d D_ja_{ji}$ and $\sigma=\sqrt{a}$.

The idea is to decompose the drift term $\eps \int_0^{t/\eps^2}b(\omega_s)ds$ as a martingale plus some small remainder. Since it is an additive functional of a stationary and ergodic Markov process, we can use the Kipnis-Varadhan method. For any $\lambda>0$, the $\lambda$-corrector in the direction of $\xi\in \R^d$, denoted by $\phi_{\lambda,\xi}$, is defined as the solution in $L^2(\Omega)$ to the following equation:
\begin{equation}
\label{corEqP}
(\lambda-L)\phi_{\lambda,\xi}=\xi\cdot b.
\end{equation}
By It\^o's formula, 
\begin{equation}
\begin{aligned}
\td{\phi}_{\lambda,\xi}(X_{t/\eps^2}^\omega,\omega) - \td{\phi}_{\lambda,\xi}(X_{0}^\omega,\omega) = &\int_0^{t/\eps^2} L^\omega_1 \td{\phi}_{\lambda,\xi}(X_s^\omega,\omega) \, ds \\
&+ \sum_{i,j = 1}^d \int_0^t \partial_{x_i} \td{\phi}_{\lambda,\xi}(X_s^\omega,\omega) \td{\sigma}_{ij}(X_s^\omega,\omega) \, dB_s^j.
\end{aligned}
\end{equation}
Hence, the projection on $\xi$ of the drift term can be decomposed as
\begin{equation*}
\begin{aligned}
\int_0^{t/\eps^2}(\xi\cdot b)(\omega_s) \, ds=&\int_0^{t/\eps^2}\lambda\phi_{\lambda,\xi}(\omega_s) \, ds-\phi_{\lambda,\xi}(\omega_{t/\eps^2})+\phi_{\lambda,\xi}(\omega_0)\\
&+\sum_{i,j=1}^d \int_0^{t/\eps^2}D_i\phi_{\lambda,\xi}(\omega_s)\sigma_{ij}(\omega_s) \, dB_s^j,
\end{aligned}
\end{equation*}
so the projection on $\xi$ of the rescaled process admits the following representation:
\begin{equation}
\label{e:decomp}
 \xi\cdot \left( \eps X_{t/\eps^2}^\omega \right)=\xi\cdot x+R_t^\eps(\lambda)+M_t^\eps(\lambda),
\end{equation} where the remainder $R_t^\eps(\lambda)$ and the martingale $M_t^\eps(\lambda)$ are given by
\begin{eqnarray}
R_t^\eps(\lambda)&:=&\eps\int_0^{t/\eps^2}\lambda\phi_{\lambda,\xi}(\omega_s) \, ds-\eps\phi_{\lambda,\xi}(\omega_{t/\eps^2})+\eps\phi_{\lambda,\xi}(\omega_0),\\
M_t^\eps(\lambda)&:=&\sum_{j=1}^d \eps\int_0^{t/\eps^2}\sum_{i=1}^d(D_i\phi_{\lambda,\xi}(\omega_s)+\xi_i)\sigma_{ij}(\omega_s) \, dB_s^j.
\end{eqnarray}

We point out that equation \eqref{corEqP} on the probability space $\Omega$ corresponds to the following PDE on the physical space $\R^d$:
\begin{equation}
\label{corEqR}
(\lambda-L_1^\omega)\tilde{\phi}_{\lambda,\xi}=\xi\cdot \tilde{b},
\end{equation}
where we recall that $\tilde{\phi}_{\lambda,\xi}(x,\omega)=\phi_{\lambda,\xi}(\tau_{-x}\omega)$. Letting $G^\omega_\lambda(x,y)$ be the Green function associated with $\lambda-L_1^\omega$, we have the integral representation
\begin{equation}
\phi_{\lambda,\xi}(\tau_{-x}\omega)=\int_{\R^d}G^\omega_\lambda(x,y)\xi\cdot b(\tau_{-y}\omega) \, dy.
\end{equation}

We briefly discuss the proof of homogenization, see \cite[Chapter 9]{komorowski2012fluctuations} for details. For the remainder, it can be shown that $\lambda\langle\phi_{\lambda,\xi},\phi_{\lambda,\xi}\rangle\to 0$ as $\lambda\to 0$, so by applying Lemma~\ref{lem:proEn} and choosing $\lambda=\eps^2$, we obtain $\E\E_B\{|R_t^\eps(\lambda)|^2\}\to 0$ as $\eps\to 0$. For the martingale, we can first show that $D\phi_{\lambda,\xi}$ converges in $L^2(\Omega)$, with the limit formally written as $D\phi_\xi$. Then by a martingale central limit theorem, $M_t^\eps(\lambda)$ converges in distribution to a Gaussian with mean zero and variance $\sigma_\xi^2:=\xi^T\bar{A}\xi$, where the homogenized matrix $\bar{A}$ is given by 
\begin{equation}
\label{hoMa}
\bar{A}_{ij}=\E\{(e_i+D \phi_{e_i})^Ta( e_j+D\phi_{e_j})\}.
\end{equation}
We can express the solution \eqref{proRe} in the Fourier domain using~\eqref{e:decomp} as
\begin{equation}
u_\eps(t,x,\omega)=\frac{1}{(2\pi)^d}\int_{\R^d} \hat{f}(\xi)e^{i\xi \cdot x}\E_B\{e^{i R_t^\eps(\lambda)} e^{iM_t^\eps(\lambda)}\} \, d\xi.
\end{equation}
By the convergence of $R_t^\eps(\lambda)\to 0$ and $M_t^\eps(\lambda)\to  N(0,\sigma_\xi^2)$, it can be shown that
\begin{equation}
u_\eps(t,x,\omega)\to u_{\hom}(t,x)=\frac{1}{(2\pi)^d}\int_{\R^d}\hat{f}(\xi)e^{i\xi \cdot x}e^{-\frac12\xi^T\bar{A}\xi t} \, d\xi
\end{equation}
in probability.

\section{Properties of correctors and functionals of the environment seen from the particle}
\label{s:pCo}

In this section, we first present some key estimates on the corrector $\phi_{\lambda,\xi}$ and the Green function $G_\lambda^\omega(x,y)$. 
Then we analyze the decorrelation rate of certain functionals of the corrector by an application of the spectral gap inequality. In the end, we estimate the variance decay of functionals of the environmental process by a comparison of resolvents. Throughout the section, $\xi$ is a fixed vector in $\R^d$.

The following two theorems are borrowed from \cite[Proposition~1]{gloria2014quantitative} and \cite[Corollary~1.5]{gloria2014improved}.
\begin{thm}[\cite{gloria2014quantitative}]
\label{t:bdCo}
Recall that $d\geq 3$. There exists $\phi_{\xi}\in H^1(\Omega)$ such that $\phi_{\lambda,\xi}\to \phi_\xi$ in $H^1(\Omega)$ as $\lambda$ tends to $0$. Furthermore, the $p$-th moments of $\phi_{\lambda,\xi},D\phi_{\lambda,\xi},\phi_\xi,D\phi_\xi$ are uniformly bounded in $\lambda$ for any $p<\infty$.
\end{thm}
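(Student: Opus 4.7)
The plan is to establish three things in sequence: a uniform first-order energy estimate, convergence as $\lambda\to 0$, and the higher moment bounds. The theorem is quoted from \cite{gloria2014quantitative}, and the strategy outlined below follows theirs, adapted to the Poisson setting.

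First I would test the corrector equation $(\lambda-L)\phi_{\lambda,\xi}=\xi\cdot b$ against $\phi_{\lambda,\xi}$ in $L^2(\Omega)$. Using $b_i=\frac12\sum_j D_j a_{ji}$ and integrating by parts,
\begin{equation*}
\lambda\|\phi_{\lambda,\xi}\|^2 + \tfrac12 \la a\, D\phi_{\lambda,\xi},\, D\phi_{\lambda,\xi}\ra = -\tfrac12 \la a\xi,\, D\phi_{\lambda,\xi}\ra,
\end{equation*}
so uniform ellipticity yields $\lambda\|\phi_{\lambda,\xi}\|^2 + \|D\phi_{\lambda,\xi}\|^2 \lesssim |\xi|^2$. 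A standard Kipnis--Varadhan-type Cauchy argument applied to differences $\phi_{\lambda,\xi}-\phi_{\mu,\xi}$ (which solve $(\lambda-L)(\phi_{\lambda,\xi}-\phi_{\mu,\xi})=(\mu-\lambda)\phi_{\mu,\xi}$) then shows that $D\phi_{\lambda,\xi}$ converges in $L^2(\Omega)$ as $\lambda\to 0$, producing a limit gradient $D\phi_\xi$.

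Second, to construct $\phi_\xi$ itself as a stationary element of $L^2(\Omega)$ (which is where $d\ge 3$ enters), I would work from the integral representation
\begin{equation*}
\td\phi_{\lambda,\xi}(x,\omega)=\int_{\R^d}G^\omega_\lambda(x,y)\,\xi\cdot \td b(y,\omega)\, dy
\end{equation*}
and integrate by parts in $y$ to move the divergence in $\td b$ onto the Green function. Combined with the annealed gradient Green-function bound $\E[|\nabla_y G^\omega_\lambda(x,y)|^2]^{1/2}\lesssim |x-y|^{1-d}$ from \cite{gloria2014improved} and stationarity of $\td a$, this produces an $L^2(\Omega)$ estimate on $\td\phi_{\lambda,\xi}(0,\omega)$ essentially controlled by $\int_{\R^d}|y|^{2(1-d)}\,dy$, which is finite at infinity precisely when $d\ge 3$; the singularity at the origin is absorbed using the $\lambda$-massive regularization together with local elliptic estimates. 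Combined with the first step, this upgrades $L^2$ convergence of the gradients to convergence of $\phi_{\lambda,\xi}$ in $H^1(\Omega)$.

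Third, to upgrade these $L^2$ bounds to $L^p$ for all $p<\infty$, I would invoke the spectral gap/concentration inequality for Poisson functionals,
\begin{equation*}
\|F-\E F\|_{L^p(\P)} \lesssim \sqrt{p}\,\Bigl\|\Bigl(\int|D_{(m,z)}F|^2\,\d\mu(m)\,dz\Bigr)^{1/2}\Bigr\|_{L^p(\P)},
\end{equation*}
where $D_{(m,z)}F=F(\omega\cup\{(m,z)\})-F(\omega)$ is the add-one cost. Applied to $F=\td\phi_{\lambda,\xi}(0,\omega)$ and $F=D_i\phi_{\lambda,\xi}$, the add-one cost is expressible through the Green function together with the local sensitivity of the coefficients to an inserted point, whence the square-function bound reduces to a weighted integral of annealed gradient Green-function estimates. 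Iterating on $p$ (or invoking a modified log-Sobolev inequality for Poisson functionals) then delivers $p$-th moment bounds uniform in $\lambda$. The main obstacle is precisely this third step: mere existence is classical Kipnis--Varadhan, but uniform-in-$\lambda$ $L^p$ control for every $p$ is the deep content of \cite{gloria2014quantitative} and relies crucially on the sharp annealed gradient Green-function estimates of \cite{gloria2014improved} together with a careful nonlinear iteration of the Poisson concentration inequality. The hypothesis $d\ge 3$ enters both here, via integrability of the kernels at infinity, and in the second step, where it is necessary for the stationary corrector itself to exist.
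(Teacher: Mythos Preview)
The paper does not prove this theorem; it is quoted directly from \cite{gloria2014quantitative}. The only thing the paper adds is a remark explaining that the cited result bounds moments of the \emph{spatially averaged} gradient $\int_{|x|\le 1}|\nabla\tilde\phi_{\lambda,\xi}|^2\,dx$, and then uses the assumed $C^2$ regularity of $\tilde a$ together with interior H\"older estimates to pass from this to pointwise moment bounds on $D\phi_{\lambda,\xi}$. Your sketch omits this step, which is in fact the one place where the paper's smoothness hypothesis on $\tilde a$ is invoked.

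More substantively, your second step has a gap. After integrating by parts you obtain
\[
\tilde\phi_{\lambda,\xi}(0,\omega)=-\tfrac12\int_{\R^d} \nabla_y G_\lambda^\omega(0,y)\cdot\tilde a(y,\omega)\,\xi\,dy,
\]
but $\tilde a$ is merely bounded, not decaying, so the annealed bound $\E[|\nabla_y G|^2]^{1/2}\lesssim|y|^{1-d}$ applied via Minkowski yields the divergent integral $\int_{\R^d}|y|^{1-d}\,dy$, not the one you write. The integrand $|y|^{2(1-d)}$ is what appears only \emph{after} one applies the spectral gap inequality: bounding $\E[\phi_{\lambda,\xi}^2]\le\sum_k\E[|\partial_k\phi_{\lambda,\xi}|^2]$ and estimating each vertical derivative through the Green-function sensitivity formula (exactly the computation the paper carries out in the proof of Proposition~\ref{p:corCo}) gives $\E[|\partial_k\phi_{\lambda,\xi}|^2]\lesssim|k|^{2(1-d)}$, whose sum over $k\in\Z^d$ converges for $d\ge 3$. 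In other words, the spectral gap is not merely a device for upgrading $L^2$ to $L^p$ as in your third step; it is already essential for the basic uniform $L^2$ bound on the corrector itself. As written, your second step does not close.
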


\begin{rem}
From \eqref{corEqP}, it is clear that $\E\{\phi_{\lambda,\xi}\}=0$, so $\E\{\phi_\xi\}=0$.
\end{rem}

\begin{rem}
For the gradient of the corrector $D\phi_{\lambda,\xi}$, \cite[Proposition~1]{gloria2014quantitative} proves 
$$
\E\Ll\{\Ll(\int_{|x|\leq 1}|\nabla\tilde{\phi}_{\lambda,\xi}(x,\omega)|^2dx\Rr)^p\Rr\}\leq C_p
$$ for any $p>0$, i.e., a high moment bound of some spatial average. This can be improved with additional regularity assumptions on $\tilde{a}$. Recall that for almost every~$\omega$, $\tilde{\phi}_{\lambda,\xi}(x,\omega)$ is the weak solution to
\begin{equation*}
\lambda \tilde{\phi}_{\lambda,\xi}(x,\omega)-\frac12\nabla\cdot \tilde{a}(x,\omega)(\xi + \nabla\tilde{\phi}_{\lambda,\xi}(x,\omega))=0,
\end{equation*}
and since the sample path of $\tilde{a}_{ij}(x,\omega)$ is $C^2$ and hence H\"older continuous (uniformly over $\omega$), the following estimate is given by standard H\"older regularity theory \cite[Theorems~3.13 and 3.1]{han1997elliptic}
\begin{equation*}
|\nabla\tilde{\phi}_{\lambda,\xi}(0,\omega)|^2\leq C\left(1+\int_{|x|\leq 1} |\tilde{\phi}_{\lambda,\xi}(x,\omega)|^2dx+\int_{|x|\leq 1} |\nabla\tilde{\phi}_{\lambda,\xi}(x,\omega)|^2dx\right),
\end{equation*}
with the constant $C$ independent of $\omega$ and $\lambda \le 1$. By taking expectation, we derive a bound on the $L^p$ norm of $D\phi_{\lambda,\xi}$ that is uniform in $\lambda \le 1$.
\end{rem}

\begin{thm}[\cite{gloria2014improved,armstrong2}]
\label{t:bdGr}
Recall that $d\geq 3$. For every $p>0$, there exists $C_p < \infty$ such that for every $\lambda \ge 0$ and $x,y\in\R^d$,
\begin{eqnarray*}
\E\{|\nabla_x G^\omega_\lambda(x,y)|^p\}^{\frac{1}{p}}&\leq& \frac{C_p}{|x-y|^{d-1}},\\
\E\{|\nabla_x\nabla_y G^\omega_\lambda(x,y)|^p\}^{\frac{1}{p}}&\leq& \frac{C_p}{|x-y|^{d}},
\end{eqnarray*}
where the constant $C_p>0$ does not depend on $\lambda$, and $\nabla_x\nabla_y$ denotes the mixed second order derivatives.
\end{thm}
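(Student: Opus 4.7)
The plan is to derive \emph{deterministic} (i.e.\ $\omega$-uniform) pointwise bounds on $\nabla_x G_\lambda^\omega(x,y)$ and $\nabla_x \nabla_y G_\lambda^\omega(x,y)$; since $\td a$ is assumed uniformly elliptic \emph{and} its sample paths are uniformly $C^2$, the randomness does not enter the upper bounds, so the $p$-th moment statements reduce to pointwise estimates. The strategy proceeds in three layers: Aronson-type control of $G_\lambda^\omega$ itself, an upgrade to a local $L^2$ bound on its gradient via Caccioppoli, and a final pointwise upgrade via Schauder regularity.

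First I would invoke the Nash--Aronson upper bound on the heat kernel $p_t^\omega(x,y)$ associated with $L_1^\omega$: uniform ellipticity alone yields $p_t^\omega(x,y) \le C t^{-d/2} \exp(-c|x-y|^2/t)$ with constants depending only on the ellipticity ratio. Integrating $e^{-\lambda t} p_t^\omega(x,y)$ over $t > 0$ gives the pointwise bound $|G_\lambda^\omega(x,y)| \le C |x-y|^{-(d-2)}$, valid uniformly in $\omega$ and in $\lambda \ge 0$. This handles the base-level size of the Green function.

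Next, set $r = |x-y|/4$ and observe that $u(z) := G_\lambda^\omega(z, y)$ solves $(\lambda - L_1^\omega) u = 0$ on $B(x, 2r)$, which does not contain $y$. Caccioppoli's energy inequality applied to this equation (the zero-order $\lambda u$ term has the good sign and is harmlessly absorbed), combined with the pointwise bound from the previous step, yields
$$
\int_{B(x,r)} |\nabla_z G_\lambda^\omega(z, y)|^2 \, \d z \,\les\, r^{-2} \int_{B(x,2r)} |G_\lambda^\omega(z,y)|^2 \, \d z \,\les\, r^{2-d},
$$
with $\omega$- and $\lambda$-uniform constants. Because $\td a$ is uniformly $C^2$ in $z$ with derivatives bounded independently of $\omega$, the coefficients of the equation belong to a fixed $C^{0,\alpha}$ class. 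Classical interior Schauder estimates then promote the $L^2$ control of $\nabla_z G_\lambda^\omega(z,y)$ into the pointwise bound $|\nabla_x G_\lambda^\omega(x,y)| \les r^{-(d-1)}$, uniformly in $\omega$ and $\lambda$.

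For the mixed second derivative, I would use the symmetry $G_\lambda^\omega(x,y) = G_\lambda^\omega(y,x)$ (coming from symmetry of $\td a$) to obtain the same pointwise bound on $\nabla_y G_\lambda^\omega(x,y)$, and then repeat the Caccioppoli--Schauder argument in the $x$-variable applied to $z \mapsto \nabla_y G_\lambda^\omega(z,y)$, which still satisfies the same elliptic equation in $z$ away from $y$. This yields $|\nabla_x \nabla_y G_\lambda^\omega(x,y)| \les r^{-d}$. All constants being deterministic, the moment bounds follow trivially. The main point I would watch is $\lambda$-uniformity throughout: it relies on the observation that both Caccioppoli and Schauder estimates behave monotonically in $\lambda$ (a positive zero-order coefficient only helps), so the constants obtained for $\lambda = 0$ control the general case.
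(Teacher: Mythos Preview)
The paper does not give its own proof of this theorem; it is quoted from the cited references \cite{gloria2014improved,armstrong2}. So there is no in-paper proof to compare against. That said, your proposal has a real gap, and it is worth pinpointing where.

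The weak spot is the passage from the Caccioppoli $L^2$ bound on $\nabla G$ over $B(x,r)$, $r=|x-y|/4$, to the pointwise bound $|\nabla_x G_\lambda^\omega(x,y)|\lesssim r^{-(d-1)}$ via ``classical interior Schauder estimates''. Schauder estimates are not scale-free in the way you need. If you rescale $B(x,r)$ to the unit ball by $z\mapsto x+rz$, the coefficients become $\td a(x+rz,\omega)$, whose $C^{0,\alpha}$ seminorm is $r^{\alpha}[\td a]_{C^{0,\alpha}}$; for $r\gg 1$ this blows up, and the Schauder constant blows up with it. If instead you apply Schauder on a unit-scale ball $B(x,1)$, the constant is indeed uniform, but then you can only input $\|G\|_{L^\infty(B(x,1))}\lesssim r^{-(d-2)}$ (or, after subtracting a constant and invoking De~Giorgi--Nash, $\mathrm{osc}_{B(x,1)}G\lesssim r^{-(d-2+\alpha)}$ for some $\alpha\in(0,1)$). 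Either way you obtain $|\nabla_x G|\lesssim r^{-(d-2)}$ or $r^{-(d-2+\alpha)}$, not $r^{-(d-1)}$. The same issue propagates to the mixed-derivative step.

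The missing ingredient is precisely a \emph{large-scale} Lipschitz estimate --- that for $a$-harmonic functions $u$ on $B_R$ one has $\|\nabla u\|_{L^\infty(B_{R/2})}\le C\big(\fint_{B_R}|\nabla u|^2\big)^{1/2}$ with $C$ independent of $R$. This is exactly what the cited works establish (in an annealed, or quenched-with-random-minimal-radius, form), and it genuinely relies on the stationarity and quantitative ergodicity of the coefficient field, not on its smoothness. Uniform $C^2$ regularity of $\td a$ buys you small-scale control (scales $\le 1$), but nothing at large scales; in particular, your claim that ``the randomness does not enter the upper bounds'' is not correct for $|x-y|\gg 1$.
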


The Poisson structure that we assume enables us to decompose the randomness into i.i.d.\ random variables, i.e., we have $\omega=\{\eta_k,k\in \Z^d\}$ with $\eta_k$ the Poisson point process restricted on $\mathcal{M}\times \{k+[0,1)^d\}$. In this way, we can use a spectral gap inequality given by \cite[Lemma~1]{gloria2013quantification} to estimate the decorrelation rates of functions on $\Omega$. For any $f\in L^2(\Omega)$ with $\E\{f\}=0$, the inequality shows 
\begin{equation}
\E\{f^2\}\leq \sum_{k\in \Z^d}\E\{|\partial_k f|^2\},
\end{equation}
with $\partial_k f:=f-\E\{f|\{\eta_i, i\neq k\}\}$ describing the dependence of $f$ on $\eta_k$.

By following the same argument, a covariance estimate can be derived, i.e., for any $f,g\in L^2(\Omega)$ with $\E\{f\}=\E\{g\}=0$, we have
\begin{equation}
\label{covEs1}
|\E\{fg\}|\leq \sum_{k\in \Z^d}\sqrt{\E\{|\partial_k f|^2\}}\sqrt{\E\{|\partial_k g|^2\}}.
\end{equation}

We further claim that 
\begin{equation}
\label{deEs}
\E\{|\partial_k f|^2\}= \frac{1}{2} \,  \E\{|f-f_k|^2\}.
\end{equation}
Here $f_k(\omega):=f(\omega_k)$ with $\omega_k:=\{\eta_i,i\neq k\}\cup \{\tilde{\eta}_k\}$ and $\tilde{\eta}_k$ an independent copy of $\eta_k$, i.e., $\omega_k$ is a perturbation of $\omega$ at $k$. First, since conditional expectation is an $L^2$ projection, we have $\E\{|\partial_k f|^2\}=\E\{f^2\}-\E\{|\E\{f|\{\eta_i, i\neq k\}\}|^2\}$. Secondly, $\E\{|f-f_k|^2\}=2\E\{f^2\}-2\E\{ff_k\}$ and $\E\{ff_k\}=\E\{|\E\{f|\{\eta_i, i\neq k\}\}|^2\}$ by conditioning on $\{\eta_i,i\neq k\}$. So \eqref{deEs} is proved.

Combining \eqref{covEs1} and \eqref{deEs}, we obtain
\begin{equation}
\label{covEs2}
|\E\{fg\}|\leq \sum_{k\in \Z^d}\sqrt{\E\{|f-f_k|^2\}}\sqrt{\E\{|g-g_k|^2\}}.
\end{equation}
This will be our main tool to estimate the decorrelation rate of functionals on $\Omega$.

\begin{rem}

\label{r:checker}
The covariance estimate also holds for the random checkerboard structure, e.g., let $\tilde{a}(x,\omega)=\eta_k$ if $x-k\in[0,1)^d$, with $\{\eta_k, k\in \Z^d\}$ i.i.d.\ matrix-valued random variables. However, in that case $\tilde{a}(x,\omega)$ is only stationary with respect to shifts in $\Z^d$,  and such situations are not covered by Theorems~\ref{t:bdCo} and \ref{t:bdGr}.
\end{rem}

The following is an estimate of the decorrelation rate of $\phi_\xi$.

\begin{prop}
\label{p:corCo}
$|\E\{\phi_\xi(\tau_0\omega)\phi_\xi(\tau_{-x}\omega)\}|\les |\xi|^2(1\wedge \frac{1}{|x|^{d-2}})$.
\end{prop}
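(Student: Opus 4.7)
My plan is to apply the covariance estimate \eqref{covEs2} with
$$f(\omega) := \phi_\xi(\omega) = \tilde\phi_\xi(0,\omega), \qquad g(\omega) := \phi_\xi(\tau_{-x}\omega) = \tilde\phi_\xi(x,\omega),$$
so that both $f(\omega)-f(\omega_k)$ and $g(\omega)-g(\omega_k)$ are values at $0$ and $x$ of a single object
$$h^k(y) := \tilde\phi_\xi(y,\omega) - \tilde\phi_\xi(y,\omega_k).$$
The main task is then to show that for $|y_0-k|\ge 2$,
$$\E\{|h^k(y_0)|^2\} \les \frac{|\xi|^2}{|y_0-k|^{2(d-1)}}, \qquad (\star)$$
while for $|y_0-k|\le 2$ one has the trivial bound $\E\{|h^k(y_0)|^2\}\les |\xi|^2$ from stationarity and Theorem~\ref{t:bdCo} applied to both $\omega$ and $\omega_k$ (which share the same law).

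To prove $(\star)$, I would first work with the regularized corrector $\phi_{\lambda,\xi}$ and consider $h^k_\lambda:=\tilde\phi_{\lambda,\xi}(\cdot,\omega) - \tilde\phi_{\lambda,\xi}(\cdot,\omega_k)$. Subtracting the PDEs \eqref{corEqR} written for $\omega$ and $\omega_k$ gives
$$\lambda h^k_\lambda - \tfrac12 \nabla \cdot \tilde a(\cdot,\omega)\nabla h^k_\lambda = \tfrac12 \nabla \cdot \bigl[(\tilde a(\cdot,\omega) - \tilde a(\cdot,\omega_k))(\xi + \nabla \tilde\phi_{\lambda,\xi}(\cdot,\omega_k))\bigr].$$
The right-hand side has its coefficient $(\tilde a(\cdot,\omega)-\tilde a(\cdot,\omega_k))$ supported in a ball of fixed radius around $k$ by the local dependence assumption. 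Using the Green function $G^\omega_\lambda$ of $\lambda-L_1^\omega$ and integrating by parts,
$$h^k_\lambda(y_0) = -\tfrac12 \int_{|y-k|\le C} \nabla_y G^\omega_\lambda(y_0,y) \cdot (\tilde a-\tilde a^k)(y)\bigl(\xi + \nabla\tilde\phi_{\lambda,\xi}(y,\omega_k)\bigr)\, dy.$$
Applying Cauchy-Schwarz on this bounded domain, then another Cauchy-Schwarz in expectation, and finally invoking Theorem~\ref{t:bdGr} for $\E\{|\nabla G^\omega_\lambda(y_0,y)|^{p}\}^{1/p}\les |y_0-k|^{-(d-1)}$ (for $y$ near $k$) together with Theorem~\ref{t:bdCo} for $\E\{|\nabla\tilde\phi_{\lambda,\xi}(y,\omega_k)|^p\}\les 1$ yields $(\star)$ with constants uniform in $\lambda$. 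Passing $\lambda\to 0$ in $L^2(\Omega)$ via Theorem~\ref{t:bdCo} transfers the bound to $h^k$.

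Plugging $(\star)$ and the small-$k$ bound into \eqref{covEs2} gives
$$|\E\{\phi_\xi(\omega)\phi_\xi(\tau_{-x}\omega)\}| \le \sum_{k\in\Z^d}\sqrt{\E\{|h^k(0)|^2\}\,\E\{|h^k(x)|^2\}} \les |\xi|^2 \sum_{k\in\Z^d} \frac{1}{(1\vee|k|)^{d-1}(1\vee|x-k|)^{d-1}}.$$
For $d\ge 3$, the convolution of two kernels of decay $d-1$ yields a kernel of decay $2(d-1)-d=d-2$, so the sum is $\les |x|^{-(d-2)}$ for $|x|\ge 1$ (split $\Z^d$ into regions $|k|\le|x|/2$, $|k-x|\le|x|/2$, and the complement, using $\int_{|k|\le R}|k|^{-(d-1)}dk \les R$). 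For $|x|\le 1$, direct Cauchy-Schwarz on $\E\{\phi_\xi(\omega)\phi_\xi(\tau_{-x}\omega)\}$ combined with Theorem~\ref{t:bdCo} gives the bound $|\xi|^2$. Combining the two regimes gives $1\wedge |x|^{-(d-2)}$.

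The main obstacle is the $\lambda \to 0$ limit together with the simultaneous manipulation of the Green function for $\omega$ and the corrector evaluated at $\omega_k$; this is handled cleanly because $\omega_k \stackrel{d}{=} \omega$, so both Theorems~\ref{t:bdCo} and \ref{t:bdGr} apply with the same constants to $\omega$ and $\omega_k$, and the $L^2(\Omega)$-convergence of $\phi_{\lambda,\xi}$ (jointly under $\omega$ and $\omega_k$) guarantees that the uniform bounds $(\star)$ persist in the limit.
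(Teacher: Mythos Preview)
Your proposal is correct and follows essentially the same route as the paper: apply the covariance estimate \eqref{covEs2}, subtract the regularized corrector equations for $\omega$ and $\omega_k$ to obtain the Green-function representation of $h^k_\lambda$, invoke Theorems~\ref{t:bdCo} and \ref{t:bdGr} for the bound $(\star)$ uniformly in $\lambda$, and conclude with the convolution estimate (the paper packages this as Lemma~\ref{l:conL}). The only cosmetic difference is that the paper keeps $\lambda>0$ throughout and passes to the limit on the final covariance, whereas you pass $\lambda\to 0$ on each term $h^k$; both orderings are fine.
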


\begin{proof}
By Theorem~\ref{t:bdCo}, $\phi_{\lambda,\xi}\to \phi_\xi$ in $L^2(\Omega)$, so we only need to show that the estimate holds for $\phi_{\lambda,\xi}$ with an implicit constant independent of $\lambda$. Clearly, it suffices to consider $|x|$ sufficiently large.

By \eqref{covEs2} we have
\begin{equation}
\begin{aligned}
&|\E\{\phi_{\lambda,\xi}(\tau_0\omega)\phi_{\lambda,\xi}(\tau_{-x}\omega)\}|\\
\leq &\sum_{k\in \Z^d}\sqrt{\E\{|\phi_{\lambda,\xi}(\tau_0\omega)-\phi_{\lambda,\xi}(\tau_0\omega_k)|^2\}}\sqrt{\E\{|\phi_{\lambda,\xi}(\tau_{-x}\omega)-\phi_{\lambda,\xi}(\tau_{-x}\omega_k)|^2\}},
\end{aligned}
\end{equation}
where $\omega_k$ is obtained by replacing $\eta_k$ in $\omega$ by an independent copy $\tilde{\eta}_k$.

Now we only need to control $\E\{|\phi_{\lambda,\xi}(\tau_{-x}\omega)-\phi_{\lambda,\xi}(\tau_{-x}\omega_k)|^2\}$ for $x\in \R^d, k\in \Z^d$. Since it is bounded, we consider the case when $|x-k|$ is large. Recall that we write $\tilde{\phi}_{\lambda,\xi}(x,\omega)=\phi_{\lambda,\xi}(\tau_{-x}\omega)$, and that
\begin{eqnarray}
\lambda\tilde{\phi}_{\lambda,\xi}(x,\omega)-\frac12\nabla\cdot (\tilde{a}(x,\omega)\nabla\tilde{\phi}_{\lambda,\xi}(x,\omega))&=&\xi\cdot \tilde{b}(x,\omega),\\
\lambda\tilde{\phi}_{\lambda,\xi}(x,\omega_k)-\frac12\nabla\cdot (\tilde{a}(x,\omega_k)\nabla\tilde{\phi}_{\lambda,\xi}(x,\omega_k))&=&\xi\cdot \tilde{b}(x,\omega_k).
\end{eqnarray}
As a consequence,
\begin{equation}
\label{e:diff-phi}
\begin{aligned}
&\tilde{\phi}_{\lambda,\xi}(x,\omega)-\tilde{\phi}_{\lambda,\xi}(x,\omega_k)\\
=&\int_{\R^d}G_\lambda^\omega(x,y)\left(\xi\cdot(\tilde{b}(y,\omega)-\tilde{b}(y,\omega_k))+\frac12\nabla\cdot (\tilde{a}(y,\omega)-\tilde{a}(y,\omega_k))\nabla \tilde{\phi}_{\lambda,\xi}(y,\omega_k)\right) \, dy\\
=&-\int_{\R^d}\nabla_y G_\lambda^\omega(x,y)\left(\frac12 (\tilde{a}(y,\omega)-\tilde{a}(y,\omega_k))\xi+\frac12(\tilde{a}(y,\omega)-\tilde{a}(y,\omega_k))\nabla \tilde{\phi}_{\lambda,\xi}(y,\omega_k)\right) \, dy,
\end{aligned}
\end{equation}
since $\xi \cdot \td{b} = \frac12 \nabla \cdot (\td{a} \xi)$. By the assumptions on $a$, $\tilde{a}(y,\omega)-\tilde{a}(y,\omega_k)=0$ when $|y-k|\geq C$ for some constant $C$, so
\begin{equation}
|\tilde{\phi}_{\lambda,\xi}(x,\omega)-\tilde{\phi}_{\lambda,\xi}(x,\omega_k)|\les \int_{|y-k|\leq C}|\nabla_y G_\lambda^\omega(x,y)|(|\xi|+|\nabla \tilde{\phi}_{\lambda,\xi}(y,\omega_k)|) \, dy,
\end{equation}
which implies
\begin{equation}
\label{pfcorCo}
\begin{aligned}
\E\{|\tilde{\phi}_{\lambda,\xi}(x,\omega)-\tilde{\phi}_{\lambda,\xi}(x,\omega_k)|^2\}\les & |\xi|^2\int_{|y-k|\leq C}\E\{|\nabla_y G_\lambda^\omega(x,y)|^2\} \, dy\\
& + \int_{|y-k|\leq C}\sqrt{\E\{|\nabla_y G_\lambda^\omega(x,y)|^4\}}\sqrt{\E\{|\nabla\tilde{\phi}_{\lambda,\xi}(y,\omega_k)|^4\}} \, dy.
\end{aligned}
\end{equation}
By Theorem~\ref{t:bdCo} and the fact that $\tilde{\phi}_{\lambda,\xi}$ is linear in $\xi$, we first observe that
\begin{equation}
\sqrt{\E\{|\nabla \tilde{\phi}_{\lambda,\xi}(y,\omega_k)|^4\}}\les |\xi|^2,
\end{equation}
then we apply Theorem~\ref{t:bdGr} on the r.h.s.\ of \eqref{pfcorCo} to derive 
\begin{equation}
\sqrt{\E\{|\phi_{\lambda,\xi}(\tau_{-x}\omega)-\phi_{\lambda,\xi}(\tau_{-x}\omega_k)|^2\}}\les  |\xi|( 1\wedge \frac{1}{|x-k|^{d-1}}).
\end{equation}

Now we have
\begin{equation}
|\E\{\phi_{\lambda,\xi}(\tau_0\omega)\phi_{\lambda,\xi}(\tau_{-x}\omega)\}|\les |\xi|^2\sum_{k\in \Z^d} (1\wedge \frac{1}{|k|^{d-1}})(1\wedge \frac{1}{|x-k|^{d-1}})\les \frac{|\xi|^2}{|x|^{d-2}},
\end{equation}
where the last inequality comes from Lemma~\ref{l:conL}. The proof is complete.
\end{proof}

Define 
\begin{equation}
\label{defpsi}
\begin{aligned}
\psi_\xi:=&(\xi+D\phi_\xi)^Ta(\xi+D\phi_\xi)-\xi^T\bar{A}\xi\\
=&\sum_{i,j=1}^d \xi_i\xi_j\left( (e_i+D\phi_{e_i})^Ta(e_j+D \phi_{e_j})-\bar{A}_{ij}\right),
\end{aligned}
\end{equation}
by the definition of the homogenized matrix $\bar{A}$ in \eqref{hoMa}, $\psi$ has mean zero and we can write it as $\psi_\xi=\sum_{i,j=1}^d\xi_i\xi_j\psi_{ij}$ with 
\begin{equation}
\psi_{ij}:=(e_i+D\phi_{e_i})^Ta(e_j+D \phi_{e_j})-\bar{A}_{ij}.
\end{equation}

The following is an estimate of the decorrelation rate of $\psi_\xi$.
\begin{prop}
\label{p:corQu}
$|\E\{\psi_\xi(\tau_0\omega)\psi_\xi(\tau_{-x}\omega)\}|\les |\xi|^4( 1\wedge \frac{\log(2+|x|)}{|x|^d})$.
\end{prop}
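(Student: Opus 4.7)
The plan is to mimic closely the proof of Proposition~\ref{p:corCo}, replacing the $\phi_\xi$ sensitivities by $\psi_\xi$ sensitivities. First, as before, I work with the regularized object
$$
\psi_{\lambda,\xi}:=(\xi+D\phi_{\lambda,\xi})^T a (\xi+D\phi_{\lambda,\xi})-\E\{(\xi+D\phi_{\lambda,\xi})^T a (\xi+D\phi_{\lambda,\xi})\},
$$
obtain the estimate with constants independent of $\lambda$, and pass to the limit $\lambda \to 0$ using the $H^1(\Omega)$ convergence and uniform moment bounds from Theorem~\ref{t:bdCo}. Only $|x|$ large needs attention. Apply \eqref{covEs2} to $f = \psi_{\lambda,\xi}(\tau_0 \omega)$, $g = \psi_{\lambda,\xi}(\tau_{-x}\omega)$, so the problem reduces to bounding $\E\{|\psi_{\lambda,\xi}(\tau_{-x}\omega) - \psi_{\lambda,\xi}(\tau_{-x}\omega_k)|^2\}$ by a function of $|x-k|$.

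For this sensitivity, observe that
$$
\psi_{\lambda,\xi}(\tau_{-x}\omega) - \psi_{\lambda,\xi}(\tau_{-x}\omega_k)
$$
is a polynomial expression built from the three ``elementary'' differences $\td{a}(x,\omega)-\td{a}(x,\omega_k)$, $\nabla\td\phi_{\lambda,\xi}(x,\omega)-\nabla\td\phi_{\lambda,\xi}(x,\omega_k)$, and the values of $\td a$ and $\nabla\td\phi_{\lambda,\xi}$ themselves (the latter being controlled in $L^p$ for all $p$ by Theorem~\ref{t:bdCo} and the regularity remark following it). By the local dependence assumption on $a$, the first difference vanishes as soon as $|x-k|>C$. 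For the second difference, I differentiate \eqref{e:diff-phi} in $x$ to get
$$
\nabla_x\td\phi_{\lambda,\xi}(x,\omega)-\nabla_x\td\phi_{\lambda,\xi}(x,\omega_k)
=-\int_{|y-k|\le C}\nabla_x\nabla_y G^\omega_\lambda(x,y)\,h(y,\omega,\omega_k)\,\d y,
$$
where $h$ is a bounded expression involving $\td{a}(y,\omega)-\td{a}(y,\omega_k)$, $\xi$, and $\nabla\td\phi_{\lambda,\xi}(y,\omega_k)$. Using the mixed-gradient bound $\E\{|\nabla_x\nabla_y G^\omega_\lambda(x,y)|^p\}^{1/p}\lesssim |x-y|^{-d}$ from Theorem~\ref{t:bdGr}, together with the $L^p$ bounds on $\nabla\td\phi_{\lambda,\xi}$, Cauchy--Schwarz (or rather Hölder with a large exponent to absorb the remaining factors) yields
$$
\sqrt{\E\{|\psi_{\lambda,\xi}(\tau_{-x}\omega) - \psi_{\lambda,\xi}(\tau_{-x}\omega_k)|^2\}}\lesssim |\xi|^2\left(1\wedge\frac{1}{|x-k|^d}\right).
$$

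Inserting this into \eqref{covEs2} gives
$$
|\E\{\psi_{\lambda,\xi}(\tau_0\omega)\psi_{\lambda,\xi}(\tau_{-x}\omega)\}|
\lesssim |\xi|^4\sum_{k\in\Z^d}\left(1\wedge\frac{1}{|k|^d}\right)\left(1\wedge\frac{1}{|x-k|^d}\right),
$$
and the announced bound $|\xi|^4(1\wedge\log(2+|x|)/|x|^d)$ follows from the borderline convolution estimate (the same Lemma~\ref{l:conL} used in Proposition~\ref{p:corCo}, now with exponent $d$ instead of $d-1$, which is exactly the case that produces the logarithmic correction in dimension $d$).

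The main obstacle is the quadratic (non-linear) dependence of $\psi_\xi$ on $\xi+D\phi_\xi$: unlike for $\phi_\xi$, a single Green-function representation does not suffice, and one has to linearize the difference $\psi_\xi(\omega)-\psi_\xi(\omega_k)$ and use Hölder's inequality in combination with the high-moment bounds on the correctors from Theorem~\ref{t:bdCo}. The second slightly delicate point is that one must use the mixed second derivative estimate on the Green function from Theorem~\ref{t:bdGr}, which then produces the critical decay $|x-k|^{-d}$ and forces the appearance of the logarithm after summation.
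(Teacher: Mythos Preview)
Your proposal is correct and follows essentially the same approach as the paper's proof: regularize to $\psi_{\lambda,\xi}$, apply the covariance estimate \eqref{covEs2}, linearize the quadratic expression to reduce the sensitivity $\psi_{\lambda,\xi}(\tau_{-x}\omega)-\psi_{\lambda,\xi}(\tau_{-x}\omega_k)$ to the local term $\td a(x,\omega)-\td a(x,\omega_k)$ plus the gradient-corrector difference, and control the latter via the differentiated representation \eqref{e:diff-phi} and the mixed-derivative Green function bound of Theorem~\ref{t:bdGr}, yielding the $|x-k|^{-d}$ decay and hence the logarithmic convolution of Lemma~\ref{l:conL}. The only cosmetic difference is that the paper writes out the trilinear identity $x_1^TA_1y_1-x_2^TA_2y_2$ explicitly and uses fourth moments, whereas you invoke H\"older with a large exponent; both are equivalent given the uniform $L^p$ bounds of Theorem~\ref{t:bdCo}.
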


\begin{proof}
First we define $\psi_{\lambda,\xi}:=(\xi+D\phi_{\lambda,\xi})^Ta(\xi+D\phi_{\lambda,\xi})-\xi^T\bar{A}_\lambda\xi$, where $\bar{A}_\lambda$ is chosen so that $\psi_{\lambda,\xi}$ has zero mean. By Theorem~\ref{t:bdCo}, $\psi_{\lambda,\xi}\to \psi_\xi$ in $L^2(\Omega)$, so we only need to  consider $\psi_{\lambda,\xi}$ and show that the estimate holds uniformly in $\lambda$.  

Similarly, we apply \eqref{covEs2} to obtain
\begin{equation}
\begin{aligned}
&|\E\{\psi_{\lambda,\xi}(\tau_0\omega)\psi_{\lambda,\xi}(\tau_{-x}\omega)\}|\\
\leq &\sum_{k\in \Z^d}\sqrt{\E\{|\psi_{\lambda,\xi}(\tau_0\omega)-\psi_{\lambda,\xi}(\tau_0\omega_k)|^2\}}\sqrt{\E\{|\psi_{\lambda,\xi}(\tau_{-x}\omega)-\psi_{\lambda,\xi}(\tau_{-x}\omega_k)|^2\}},
\end{aligned}
\end{equation}
with $\omega_k$ the perturbation of $\omega$ at $k$.

For any vector $x_i,y_i\in \R^d$ and matrix $A_i\in \R^{d\times d}$, $i=1,2$, we have
\begin{equation}
|x_1^TA_1y_1-x_2^TA_2y_2|\leq |x_1-x_2|\cdot|y_1|\cdot \|A_1\|+|x_2|\cdot |y_1|\cdot \|A_1-A_2\|+|x_2|\cdot |y_1-y_2|\cdot \|A_2\|,
\end{equation}
with $\|.\|$ denoting the matrix norm here, so by the moment bounds of $D\phi_{\lambda,\xi}$, we derive
\begin{equation*}
\begin{aligned}
\E\{|\psi_{\lambda,\xi}(\tau_{-x}\omega)-\psi_{\lambda,\xi}(\tau_{-x}\omega_k)|^2\}\les |\xi|^4 &\sqrt{\E\{\|a(\tau_{-x}\omega)-a(\tau_{-x}\omega_k)\|^4\}}\\
&+|\xi|^2\sqrt{\E\{|D\phi_{\lambda,\xi}(\tau_{-x}\omega)-D\phi_{\lambda,\xi}(\tau_{-x}\omega_k)|^4\}}.
\end{aligned}
\end{equation*} 

First, $\sqrt{\E\{\|a(\tau_{-x}\omega)-a(\tau_{-x}\omega_k)\|^4\}}\les 1_{|x-k|\leq C}$ by the local dependence of $a$ on $\omega$. 

Secondly, recalling \eqref{e:diff-phi}, 
\begin{equation*}
\begin{aligned}
&\partial_{x_i}\tilde{\phi}_{\lambda,\xi}(x,\omega)-\partial_{x_i}\tilde{\phi}_{\lambda,\xi}(x,\omega_k)\\
=&-\int_{\R^d}\nabla_y \partial_{x_i}G_\lambda^\omega(x,y)\left(\frac12 (\tilde{a}(y,\omega)-\tilde{a}(y,\omega_k))\xi+\frac12(\tilde{a}(y,\omega)-\tilde{a}(y,\omega_k))\nabla \tilde{\phi}_{\lambda,\xi}(y,\omega_k)\right) \, dy.
\end{aligned}
\end{equation*}
By the same discussion as in the proof of Proposition~\ref{p:corCo}, we obtain 
\begin{equation}
\sqrt{\E\{|\nabla\tilde{\phi}_{\lambda,\xi}(x,\omega)-\nabla\tilde{\phi}_{\lambda,\xi}(x,\omega_k)|^4\}}\les |\xi|^2(1\wedge \frac{1}{|x-k|^{2d}}).
\end{equation}

To summarize, since $D\phi_{\lambda,\xi}(\tau_{-x}\omega)=\nabla\tilde{\phi}_{\lambda,\xi}(x,\omega)$, we have 
\begin{equation*}
\E\{|\psi_{\lambda,\xi}(\tau_{-x}\omega)-\psi_{\lambda,\xi}(\tau_{-x}\omega_k)|^2\}\les |\xi|^4( 1_{|x-k|\leq C}+1\wedge \frac{1}{|x-k|^{2d}}),
\end{equation*} so
\begin{equation*}
|\E\{\psi_{\lambda,\xi}(\tau_0\omega)\psi_{\lambda,\xi}(\tau_{-x}\omega)\}|\les |\xi|^4\sum_{k\in \Z^d}(1\wedge \frac{1}{|k|^d})(1\wedge \frac{1}{|x-k|^d})\les |\xi|^4\frac{\log(2+|x|)}{|x|^d},
\end{equation*}
where the last inequality comes from Lemma~\ref{l:conL}. The proof is complete.
\end{proof}

For any $f\in L^2(\Omega)$ with $\E\{f\}=0$, we are interested in the variance decay of 
\begin{equation}
f_t:=\E_B\{f(\omega_t)\}.
\end{equation}
Since $\omega_t=\tau_{-X_t^\omega}\omega$ and $X_t^\omega$ is driven by the generator $L_1^\omega=\frac12\nabla\cdot (\tilde{a}(x,\omega)\nabla)$ with $\tilde{a}$ being strictly positive definite, heuristically $X_t^\omega$ should spread at least as fast as a Brownian motion with a sufficiently small diffusion constant. In other words, letting $f_t^o:=\E_B\{f(\omega_t^o)\}$ with $\omega_t^o=\tau_{-B_s}\omega$, we expect the decay to $0$ of $f_t$ to be at least as fast as that of $f_t^o$ (up to rescaling the time by a suitable constant). The following result is a precise statement of this idea (see \cite[Lemma~5.1]{mourrat2011variance} for a classical proof).
\begin{prop}
\label{p:conRe}
For any $\lambda\ge0$, 
\begin{equation*}
\int_0^\infty e^{-\lambda t}\E\{|f_{t}|^2\} \, dt\leq C\int_0^\infty e^{-\lambda t}\E\{|f_{t}^o|^2\} \, dt.
\end{equation*}
The constant $C>0$ only depends on the ellipticity constant in \eqref{uniEll}.
\end{prop}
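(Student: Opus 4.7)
The plan is to reduce the inequality to a comparison of two resolvents on $L^2(\Omega)$ via a Dirichlet-form variational argument.

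First I would set up the reduction. Writing $P_t$ for the Markov semigroup of $(\omega_t)$, one has $f_t = P_t f$, and by Lemma~\ref{lem:proEn} the operator $P_t$ is self-adjoint on $L^2(\Omega)$. Hence $\E\{|f_t|^2\} = \la P_t f, P_t f\ra = \la f, P_{2t} f\ra$, so
$$
\int_0^\infty e^{-\lambda t}\E\{|f_t|^2\}\,dt = \frac12 \la f, (\lambda/2 - L)^{-1} f\ra.
$$
The process $(\omega_t^o) = (\tau_{-B_t}\omega)$ is likewise a reversible Markov process on $\Omega$, with generator $L^o := \frac12\sum_{i=1}^d D_i^2$ (the $\td a\equiv I$ analogue of Lemma~\ref{lem:proEn}), so the same identity holds with $L^o$ in place of $L$. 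It is therefore enough to show that for every $\mu > 0$,
$$
\la f, (\mu - L)^{-1} f\ra \le C\,\la f, (\mu - L^o)^{-1} f\ra,
$$
with the case $\lambda = 0$ following by monotone convergence as $\mu\downarrow 0$.

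Next I would invoke the classical Dirichlet variational formula
$$
\la f, (\mu - L)^{-1} f\ra = \sup_{v \in H^1(\Omega)}\bigl\{ 2\la f, v\ra - \mu\|v\|^2 - \cE(v,v)\bigr\},
$$
with $\cE(v,v) = \frac12\la Dv, a Dv\ra$, and its analogue for $L^o$ with $\cE^o(v,v) = \frac12\|Dv\|^2$. Uniform ellipticity~\eqref{uniEll} (enlarging $C$ so that $C\ge 1$) gives $\cE(v,v) \ge C^{-1}\cE^o(v,v)$, hence $\mu\|v\|^2 + \cE(v,v) \ge C^{-1}(\mu\|v\|^2 + \cE^o(v,v))$ and
$$
2\la f, v\ra - \mu\|v\|^2 - \cE(v,v) \le 2\la f, v\ra - C^{-1}\bigl(\mu\|v\|^2 + \cE^o(v,v)\bigr).
$$
Substituting $v = Cw$ (a bijection of $H^1(\Omega)$ onto itself) converts the right-hand side into $C\bigl(2\la f, w\ra - \mu\|w\|^2 - \cE^o(w,w)\bigr)$ by the quadratic scaling of the three terms; taking the supremum in $w$ then yields the desired resolvent comparison.

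The argument is essentially mechanical once the variational characterization is in place. The only point that may warrant care is the verification that $(\omega_t^o)$ is indeed a reversible Markov process on $\Omega$ with the stated generator; this is a direct adaptation of Lemma~\ref{lem:proEn} to the case $\td a \equiv I$ and uses the independence of $B$ from $\omega$.
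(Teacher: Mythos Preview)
Your argument is correct and is precisely the classical resolvent-comparison proof that the paper invokes (it does not reproduce the proof but cites \cite[Lemma~5.1]{mourrat2011variance}). The reduction $\int_0^\infty e^{-\lambda t}\E\{|f_t|^2\}\,dt = \tfrac12\la f,(\lambda/2-L)^{-1}f\ra$, the Dirichlet variational characterization of the resolvent, and the ellipticity-based comparison $\cE \ge C^{-1}\cE^o$ followed by the rescaling $v=Cw$ are exactly the ingredients of that reference, so nothing further is needed.
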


For $f=\phi_\xi$ or $\psi_\xi$, the following results holds.

\begin{prop}
\label{p:vdphi}
\begin{equation*}
\E\{|\E_B\{\phi_\xi(\omega_{t})\}|^2\}\les |\xi|^2
\Ll|
\begin{array}{ll}
t^{-\frac12} & \text{if } d =  3, \\
t^{-1}\log(2+t) & \text{if } d =  4, \\
t^{-1} & \text{if } d \ge 5.
\end{array}
\Rr.
\end{equation*}
\end{prop}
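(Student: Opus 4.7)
The plan is to use Proposition~\ref{p:conRe} to compare $f_t := \E_B\{\phi_\xi(\omega_t)\}$ with its analogue $f_t^o := \E_B\{\phi_\xi(\tau_{-B_t}\omega)\}$ driven by an independent Brownian motion, then estimate $\E\{|f_t^o|^2\}$ explicitly using the decorrelation bound of Proposition~\ref{p:corCo}, and finally pass from a Laplace-transform bound back to a pointwise one via a monotonicity argument. Since $\E\{\phi_\xi\} = 0$, Proposition~\ref{p:conRe} applies directly.

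To handle $f_t^o$, I would introduce an independent copy $B'$ of the Brownian motion and use the stationarity of $\P$ to write
\begin{equation*}
\E\{|f_t^o|^2\} = \E_B \E_{B'} \E\Ll\{\phi_\xi(\tau_{-B_t}\omega)\phi_\xi(\tau_{-B'_t}\omega)\Rr\} = \int_{\R^d} q_{2t}(z) \, \E\Ll\{\phi_\xi(\tau_0\omega)\phi_\xi(\tau_{-z}\omega)\Rr\} \, dz,
\end{equation*}
since $B'_t - B_t \sim N(0,2tI_d)$. By Proposition~\ref{p:corCo} the integrand is bounded by $|\xi|^2(1\wedge|z|^{-(d-2)})$. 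The remaining integral against the Gaussian kernel is a routine calculation: splitting into $|z|\le 1$ and $|z|>1$ and rescaling by $\sqrt{t}$ yields, for every $d\ge 3$, the bound
\begin{equation*}
\E\{|f_t^o|^2\} \lesssim |\xi|^2 \, \Ll(1 \wedge t^{-(d-2)/2}\Rr).
\end{equation*}

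Inserting this into Proposition~\ref{p:conRe} with $\lambda = 1/t$ and splitting the resulting integral at $s=t$ gives
\begin{equation*}
\int_0^\infty e^{-s/t}\, \E\{|f_s|^2\}\, ds \lesssim |\xi|^2 \, I_d(t),
\end{equation*}
where $I_d(t)$ equals $t^{1/2}$, $\log(2+t)$, and a constant, for $d=3$, $d=4$, $d\ge 5$ respectively. To convert this into a pointwise statement I would invoke monotonicity: by Lemma~\ref{lem:proEn} the environment process is reversible, so its semigroup is self-adjoint on $L^2(\Omega)$ and $s \mapsto \E\{|f_s|^2\} = \|P_s\phi_\xi\|^2$ is nonincreasing. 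Consequently
\begin{equation*}
\E\{|f_t|^2\} \le \frac{1}{\int_t^\infty e^{-s/t}\, ds}\int_t^\infty e^{-s/t}\, \E\{|f_s|^2\}\, ds \le \frac{e}{t}\int_0^\infty e^{-s/t}\, \E\{|f_s|^2\}\, ds \lesssim \frac{|\xi|^2 I_d(t)}{t},
\end{equation*}
which is exactly the claimed rate. The main obstacle is making sure the convolution of the Gaussian kernel with the correlation tail from Proposition~\ref{p:corCo} is sharp enough; in particular the logarithm in dimension $4$ is not visible in $\E\{|f_t^o|^2\}$ itself but emerges only after the Laplace-transform computation, which is why the two-step scheme (decay for $f_t^o$, then resolvent comparison) is necessary.
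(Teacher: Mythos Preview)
Your approach is essentially identical to the paper's: compute $\E\{|f_t^o|^2\}$ via the covariance bound of Proposition~\ref{p:corCo}, feed it into the resolvent comparison of Proposition~\ref{p:conRe}, and undo the Laplace transform using monotonicity of $s\mapsto \|P_s\phi_\xi\|^2$.

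There is, however, a sign slip in your monotonicity step. Since $s\mapsto \E\{|f_s|^2\}$ is \emph{nonincreasing}, on the interval $[t,\infty)$ you have $\E\{|f_s|^2\}\le \E\{|f_t|^2\}$, so
\[
\int_t^\infty e^{-s/t}\,\E\{|f_s|^2\}\,ds \ \le\ \E\{|f_t|^2\}\int_t^\infty e^{-s/t}\,ds,
\]
which is the reverse of what you wrote. The correct move is to integrate over $[0,t]$, where monotonicity gives $\E\{|f_s|^2\}\ge \E\{|f_t|^2\}$ and hence
\[
\E\{|f_t|^2\}\ \le\ \frac{1}{\int_0^t e^{-s/t}\,ds}\int_0^t e^{-s/t}\,\E\{|f_s|^2\}\,ds\ \le\ \frac{1}{t(1-e^{-1})}\int_0^\infty e^{-s/t}\,\E\{|f_s|^2\}\,ds.
\]
This is exactly what the paper does (their factor $\lambda/(1-e^{-\lambda t})$ with $\lambda=1/t$). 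With this one-line fix your argument goes through and matches the paper's proof.
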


\begin{proof}
First, for any $f$ we have
\begin{equation*}
\E\{|f_{t/2}^o|^2\}=\E\{|\E_B\{f(\tau_{-B_{t/2}}\omega)\}|^2\}=\E\{\E_{B^1,B^2}\{f(\tau_{-B^1_{t/2}}\omega)f(\tau_{-B^2_{t/2}}\omega)\}\},
\end{equation*}
where $B^1,B^2$ are two independent Brownian motions and $\E_{B^1,B^2}$ denotes the average with respect to them. 

Next let $f=\phi_\xi$ and $R_{\phi_\xi}$ be the covariance function of $\phi_\xi$(and recalling that $q_t$ is the density of the law $N(0,t)$), we obtain 
\begin{equation*}
\begin{aligned}
\E\{|f_{t/2}^o|^2\}=&\E_{B^1,B^2}\{R_{\phi_\xi}(B^1_{t/2}-B^2_{t/2})\}=\int_{\R^d}R_{\phi_\xi}(x)q_t(x) \, dx\\
=&\int_{\R^d}R_{\phi_\xi}(\sqrt{t}x)q_1(x) \, dx
\les |\xi|^2\int_{\R^d} 1\wedge \frac{1}{|\sqrt{t}x|^{d-2}} q_1(x) \, dx\les |\xi|^2(1\wedge \frac{1}{t^{\frac{d}{2}-1}}),
\end{aligned}
\end{equation*}
where we used the result $|R_{\phi_\xi}(x)|\les |\xi|^2(1\wedge |x|^{2-d})$ given by Proposition~\ref{p:corCo}.

Since $\E\{|f_{t/2}|^2\}$ decreases in $t$, from Proposition~\ref{p:conRe} we have
\begin{equation}
\E\{|f_{t/2}|^2\}\leq \frac{C\lambda \int_0^\infty e^{-\lambda s}\E\{|f_{s/2}^o|^2\} \, ds}{1-e^{-\lambda t}}\les \frac{C\lambda |\xi|^2\int_0^\infty e^{-\lambda s}(1\wedge s^{-\frac{d}{2}+1}) \, ds}{1-e^{-\lambda t}}
\end{equation}
for any $\lambda>0$. We can choose $\lambda=1/t$ on the r.h.s.\ of the above display and derive
\begin{equation}
\E\{|f_{t/2}|^2\}\les|\xi|^2
\Ll|
\begin{array}{ll}
t^{-\frac12} & \text{if } d =  3, \\
t^{-1}\log(2+t) & \text{if } d =  4, \\
t^{-1} & \text{if } d \ge 5.
\end{array}
\Rr.
\end{equation}
The proof is complete.
\end{proof}

\begin{prop}
\label{p:vdpsi}
$\int_0^\infty \E\{|\E_B\{\psi_\xi(\omega_t)\}|^2\} \, dt\les |\xi|^4$.
\end{prop}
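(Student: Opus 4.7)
The plan is to mimic the proof of Proposition~\ref{p:vdphi} with $\psi_\xi$ in place of $\phi_\xi$, now exploiting the sharper covariance decay supplied by Proposition~\ref{p:corQu}. After the customary approximation by $\psi_{\lambda,\xi}$ and passage to the limit (justified by Theorem~\ref{t:bdCo}), I would set $f=\psi_\xi$, which has zero mean by construction of $\bar A$. Applying Proposition~\ref{p:conRe} with $\lambda=0$ reduces the problem to showing
$$
\int_0^\infty \E\{|f_t^o|^2\}\, dt \les |\xi|^4.
$$

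Next, using two independent Brownian motions $B^1,B^2$ and the stationarity of~$\psi_\xi$, exactly as in the proof of Proposition~\ref{p:vdphi}, one obtains
$$
\E\{|f_{t/2}^o|^2\}=\int_{\R^d} R_{\psi_\xi}(x)\, q_t(x)\, dx,
$$
where $R_{\psi_\xi}$ is the covariance function of $\psi_\xi$. Proposition~\ref{p:corQu} then yields
$$
\E\{|f_{t/2}^o|^2\} \les |\xi|^4 \int_{\R^d}\Ll(1\wedge \frac{\log(2+|x|)}{|x|^d}\Rr) q_t(x)\, dx.
$$

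The remaining task is a deterministic estimate on the integral on the right. Rescaling $x=\sqrt{t}\, y$ and splitting the domain into $\{|y|\le 1/\sqrt{t}\}$, $\{1/\sqrt{t}\le |y|\le 1\}$, and $\{|y|\ge 1\}$ shows that this integral is $\les t^{-d/2}(\log(2+t))^2$ for $t\ge 1$, and it is trivially $\les 1$ for $t\le 1$. Since $d\ge 3$ ensures $d/2>1$, the function $t\mapsto \min(1, t^{-d/2}(\log(2+t))^2)$ is integrable on $(0,\infty)$, so a change of time variable yields the displayed bound on $\int_0^\infty \E\{|f_t^o|^2\}\, dt$ and hence the proposition.

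The only point really worth checking is the finiteness of the time integral at $+\infty$; this is precisely the gain afforded by the near-$|x|^{-d}$ decay of $R_{\psi_\xi}$ as opposed to the $|x|^{-(d-2)}$ decay of $R_{\phi_\xi}$. Convolving against $q_t$ improves the time exponent by $t^{-1}$, which is exactly what is needed to upgrade the pointwise-in-$t$ estimate of Proposition~\ref{p:vdphi} to a bound that is integrable over $(0,\infty)$ in every dimension $d\ge 3$.
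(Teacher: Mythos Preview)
Your proof is correct and follows the same strategy as the paper: reduce via Proposition~\ref{p:conRe} to the Brownian-shift process, express $\E\{|f_t^o|^2\}$ through the covariance function $R_{\psi_\xi}$, and invoke Proposition~\ref{p:corQu}. The only cosmetic difference is the order of integration at the end: the paper applies Fubini and integrates in $t$ first, using $\int_0^\infty q_{2t}(x)\,dt\lesssim |x|^{-(d-2)}$ to reduce the problem to $\int_{\R^d}|R_{\psi_\xi}(x)|\,|x|^{-(d-2)}\,dx<\infty$, whereas you bound the spatial integral pointwise in $t$ and then integrate in $t$; both routes work and yield the same conclusion.
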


\begin{proof}
Let $f=\psi_\xi$, by Proposition~\ref{p:conRe} we have
\begin{equation}
\int_0^\infty \E\{|f_{t}|^2\} \, dt\leq C\int_0^\infty \E\{|f_{t}^o|^2\} \, dt,
\end{equation}
so we only need to prove that $\int_0^\infty \E\{|f_{t}^o|^2\} \, dt\les |\xi|^4$. Let $R_{\psi_\xi}$ be the covariance function of $\psi_\xi$. By the same argument as in Proposition~\ref{p:vdphi},
\begin{equation}
\int_0^\infty \E\{|f_{t}^o|^2\} \, dt=\int_0^\infty \int_{\R^d}R_{\psi_\xi}(x)q_{2t}(x) \, dxdt.
\end{equation}
By Proposition~\ref{p:corQu}, $|R_{\psi_\xi}(x)|\les |\xi|^4(1\wedge |x|^{-d}\log(2+|x|))$, so after integrating in $t$ we obtain
\begin{equation}
\int_0^\infty \int_{\R^d}R_{\psi_\xi}(x)q_{2t}(x) \, dxdt\les \int_{\R^d}\frac{|R_{\psi_\xi}(x)|}{|x|^{d-2}} \, dx\les |\xi|^4
\end{equation}
since $d\geq 3$. The proof is complete.
\end{proof}

Before presenting the proof of the main theorem, we decompose the error as
\begin{equation}
\begin{aligned}
u_\eps(t,x,\omega)-u_{\hom}(t,x)=&\frac{1}{(2\pi)^d}\int_{\R^d} \hat{f}(\xi)e^{i\xi \cdot x}\E_B\{e^{i R_t^\eps(\lambda)} e^{iM_t^\eps(\lambda)}\} \, d\xi\\
&-\frac{1}{(2\pi)^d}\int_{\R^d}\hat{f}(\xi)e^{i\xi \cdot x}e^{-\frac12\xi^T\bar{A}\xi t} \, d\xi.
\end{aligned}
\end{equation}
Since $u_\eps-u_{\hom}$ does not depend on $\lambda$, we can send $\lambda\to 0$ on the r.h.s.\ of the above display. By Theorem~\ref{t:bdCo}, $R_t^\eps(\lambda)\to R_t^\eps$ and $M_t^\eps(\lambda)\to M_t^\eps$ in $L^2(\Omega\times \Sigma)$, where
\begin{eqnarray}
R_t^\eps:&=&-\eps\phi_{\xi}(\omega_{t/\eps^2})+\eps\phi_{\xi}(\omega_0),\\
M_t^\eps:&=&\sum_{j=1}^d \eps\int_0^{t/\eps^2}\sum_{i=1}^d(D_i\phi_{\xi}(\omega_s)+\xi_i)\sigma_{ij}(\omega_s) \, dB_s^j.
\end{eqnarray}
Therefore, the error can be rewritten as
\begin{equation}
\label{e:decomp-error}
\begin{aligned}
u_\eps(t,x,\omega)-u_{\hom}(t,x)=&\frac{1}{(2\pi)^d}\int_{\R^d} \hat{f}(\xi)e^{i\xi \cdot x}\E_B\{(e^{i R_t^\eps}-1)e^{iM_t^\eps}\} \, d\xi\\
&+\frac{1}{(2\pi)^d}\int_{\R^d} \hat{f}(\xi)e^{i\xi \cdot x}(\E_B\{e^{iM_t^\eps}\}-e^{-\frac12\xi^T\bar{A}\xi t}) \, d\xi.
\end{aligned}
\end{equation}
The first part measures how small the remainder $R_t^\eps$ is, and the second part measures how close the martingale $M_t^\eps$ is to a Brownian motion. It turns out that the error coming from the remainder generates the random, centered fluctuation, while the error coming from the martingale is of lower order. We will analyze them separately in the following two sections.

\section{An analysis of the remainder}
\label{s:r}

We define the error coming from the remainder in \eqref{e:decomp-error} as
\begin{equation}
\cE_1:=\frac{1}{(2\pi)^d}\int_{\R^d} \hat{f}(\xi)e^{i\xi \cdot x}\E_B\{(e^{i R_t^\eps}-1)e^{iM_t^\eps}\} \, d\xi.
\end{equation}
Let $\phi=(\phi_{e_1},\ldots,\phi_{e_d})$. The goal of this section is to show
\begin{prop}
\label{p:conR}
\begin{equation*}
\E\{|\cE_1-\eps \nabla u_{\hom}(t,x)\cdot \phi(\tau_{-x/\eps}\omega)|\}\leq C(1+t^\frac12) \Ll|
\begin{array}{ll}
\eps^{\frac43} & \text{if } d =  3, \\
\eps^{\frac32}|\log\eps|^\frac12 & \text{if } d =  4, \\
\eps^\frac32 & \text{if } d \ge 5,
\end{array}
\Rr.
\end{equation*}
 where $C$ is some constant.
\end{prop}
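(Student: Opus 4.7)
The plan is to Taylor-expand $e^{iR_t^\eps}-1=iR_t^\eps+O((R_t^\eps)^2)$ inside the definition of $\cE_1$ and then split $R_t^\eps=\eps\phi_\xi(\omega_0)-\eps\phi_\xi(\omega_{t/\eps^2})$. By Theorem~\ref{t:bdCo} together with the $\P$-stationarity of the environment process, $\E\E_B\{(R_t^\eps)^2\}\les\eps^2|\xi|^2$, so the quadratic linearization error contributes at most
\[
\frac{1}{(2\pi)^d}\int|\hat f(\xi)|\cdot\tfrac12\,\E\E_B\{(R_t^\eps)^2\}\,d\xi\les\eps^2,
\]
which is absorbed in the claimed bound (using that $\hat f$ is Schwartz so $\int|\xi|^2|\hat f(\xi)|\,d\xi<\infty$).

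The initial-point piece $i\eps\phi_\xi(\omega_0)e^{iM_t^\eps}$ factors out of $\E_B$ because $\phi_\xi(\omega_0)=\xi\cdot\phi(\tau_{-x/\eps}\omega)$ is $B$-independent and linear in $\xi$. Replacing $\E_B\{e^{iM_t^\eps}\}$ by the Gaussian $e^{-\frac12\xi^T\bar A\xi t}$ and Fourier-inverting against $\nabla u_{\hom}(t,x)=\frac{1}{(2\pi)^d}\int i\xi\,\hat f(\xi)e^{i\xi\cdot x-\frac12\xi^T\bar A\xi t}\,d\xi$ produces exactly the target $\eps\nabla u_{\hom}(t,x)\cdot\phi(\tau_{-x/\eps}\omega)$. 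The induced error, bounded in $L^1(\Omega)$ by Cauchy--Schwarz in $\omega$ through $\eps\|\phi\|_{L^2}$ times $\|\E_B\{e^{iM_t^\eps}\}-e^{-\frac12\xi^T\bar A\xi t}\|_{L^2(\Omega)}$ (integrated against $|\xi||\hat f(\xi)|\,d\xi$), is controlled by the quantitative martingale CLT that Section~\ref{s:m} will establish from Proposition~\ref{p:vdpsi}; this is the origin of the $(1+t^{1/2})$ prefactor. For the terminal piece $-i\eps\phi_\xi(\omega_{t/\eps^2})e^{iM_t^\eps}$ I split $e^{iM_t^\eps}=e^{-\frac12\xi^T\bar A\xi t}+(e^{iM_t^\eps}-e^{-\frac12\xi^T\bar A\xi t})$: the first summand contributes $e^{-\frac12\xi^T\bar A\xi t}\,\E_B\{\phi_\xi(\omega_{t/\eps^2})\}$, whose $L^1(\Omega)$-norm is bounded by $\|\E_B\{\phi_\xi(\omega_{t/\eps^2})\}\|_{L^2(\Omega)}$, which Proposition~\ref{p:vdphi} applied at time $s=t/\eps^2$ controls with precisely the dimension-dependent rates appearing in the statement.

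The main obstacle is the residual cross term $\E_B\{\phi_\xi(\omega_{t/\eps^2})(e^{iM_t^\eps}-e^{-\frac12\xi^T\bar A\xi t})\}$. Since $|e^{iM_t^\eps}|=1$, a direct Cauchy--Schwarz in $B$ is useless: one has $\E_B\{|e^{iM_t^\eps}-e^{-\frac12\xi^T\bar A\xi t}|^2\}=1-e^{-\xi^T\bar A\xi t}$, which does not shrink with $\eps$. The natural route is to square the $L^1$-norm in $\omega$ and duplicate the Brownian motion, rewriting the squared quantity as
\[
\E\,\E_{B^1,B^2}\Bigl\{\phi_\xi(\omega_{t/\eps^2}^1)\,\phi_\xi(\omega_{t/\eps^2}^2)\bigl(e^{iM^1}-e^{-\frac12\xi^T\bar A\xi t}\bigr)\bigl(e^{-iM^2}-e^{-\frac12\xi^T\bar A\xi t}\bigr)\Bigr\},
\]
with $M^j=M_t^\eps(B^j)$ and $\omega_{t/\eps^2}^j=\tau_{-X_{t/\eps^2}^{\omega,j}}\omega$. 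One can then exploit the fact that, with high probability, the two diffusions $X^{\omega,1},X^{\omega,2}$ have separated by a macroscopic distance of order $\eps^{-1}$ at time $t/\eps^2$, so the two endpoint correctors are essentially decorrelated via Proposition~\ref{p:corCo}. Optimizing this decorrelation against the linearization error and the martingale-CLT error is what is expected to produce the dimension-dependent rates in the statement, in particular the $\eps^{4/3}$ in $d=3$ and the logarithmic correction in $d=4$.
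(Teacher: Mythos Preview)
Your linearization step and the treatment of the initial-point piece $\eps\phi_\xi(\omega_0)$ essentially match the paper's argument (the paper uses fourth moments for the linearization, yielding $\E\{|\cE_1-\cE_2|^2\}\les\eps^4$, but this is a minor variant). The treatment of the terminal piece $-\eps\phi_\xi(\omega_{t/\eps^2})e^{iM_t^\eps}$, however, has a genuine gap.

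First, a factual correction: applying Proposition~\ref{p:vdphi} at time $s=t/\eps^2$ to the summand $e^{-\frac12\xi^T\bar A\xi t}\,\E_B\{\phi_\xi(\omega_{t/\eps^2})\}$ does \emph{not} give ``precisely the dimension-dependent rates'' in the statement. For $d=3$ it gives $\eps\cdot(\eps^2/t)^{1/4}=\eps^{3/2}t^{-1/4}$, strictly better than $\eps^{4/3}$ for fixed $t>0$; for $d\ge5$ one gets $\eps^2 t^{-1/2}$, strictly better than $\eps^{3/2}$. The exponents $\eps^{4/3},\ \eps^{3/2}|\log\eps|^{1/2},\ \eps^{3/2}$ must therefore be produced by the cross term, which you leave as an unexecuted sketch.

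Your proposed duplication argument for the cross term is not carried out, and it is not clear how it would yield the specific exponents. The difficulty is that the martingale factor $e^{iM^j}$ depends on the entire path of $B^j$ on $[0,t/\eps^2]$, while the corrector factor $\phi_\xi(\omega^j_{t/\eps^2})$ depends on the environment near the endpoint $X^{\omega,j}_{t/\eps^2}$; these are coupled through $\omega$ along the whole trajectory, so the spatial decorrelation of Proposition~\ref{p:corCo} cannot be invoked directly once the expectation over $\omega$ is taken inside. Turning this into a quantitative bound with the right rate would require substantial additional work that you do not indicate.

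The paper handles the terminal piece differently and more directly. It does \emph{not} split $e^{iM_t^\eps}$ into Gaussian plus fluctuation. Instead, for an auxiliary time $u\in(0,t)$ it writes
\[
\E_B\{\phi_\xi(\omega_{t/\eps^2})e^{iM_t^\eps}\}
=\E_B\{\phi_\xi(\omega_{t/\eps^2})e^{iM_u^\eps}\}
+\E_B\{\phi_\xi(\omega_{t/\eps^2})(e^{iM_t^\eps}-e^{iM_u^\eps})\}.
\]
In the first term, $e^{iM_u^\eps}$ is $\mathcal{F}_{u/\eps^2}$-measurable, so conditioning and the Markov property together with stationarity give
\[
\E\bigl\{|\E_B\{\phi_\xi(\omega_{t/\eps^2})e^{iM_u^\eps}\}|^2\bigr\}
\le\E\bigl\{|\E_B\{\phi_\xi(\omega_{(t-u)/\eps^2})\}|^2\bigr\},
\]
to which Proposition~\ref{p:vdphi} applies. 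In the second term, $|e^{iM_t^\eps}-e^{iM_u^\eps}|\le|M_t^\eps-M_u^\eps|$, and a martingale fourth-moment bound on the quadratic variation yields an $O(t-u)$ contribution. Optimizing over $u$ (for instance $t-u\sim\eps^{2/3}$ when $d=3$, balancing $t-u$ against $(\eps^2/(t-u))^{1/2}$) produces exactly the claimed exponents. This time-freezing trick is the missing idea in your proposal.
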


Recall that $$R_t^\eps=-\eps\phi_{\xi}(\omega_{t/\eps^2})+\eps\phi_{\xi}(\omega_0).$$ 
By Theorem~\ref{t:bdCo} and the stationarity of $\omega_s$, we obtain that
\begin{equation}
\E\E_B\{|R_t^\eps|^4\}\les |\xi|^4\eps^4.
\end{equation}
Using the fact that $|e^{ix}-1-ix|\leq x^2$ and $\hat{f}(\xi)|\xi|^2\in L^1(\R^d)$, we derive
\begin{equation}
\label{esR1}
\E\{|\cE_1-\cE_2|^2\}\les \eps^4,
\end{equation}
where
\begin{equation}
\cE_2:=\frac{1}{(2\pi)^d}\int_{\R^d} \hat{f}(\xi)e^{i\xi \cdot x}\E_B\{iR_t^\eps e^{iM_t^\eps}\} \, d\xi.
\end{equation}

Now we only need to analyze $\cE_2$. The two terms in $R_t^\eps$ are analyzed separately. For $-\eps\phi_\xi(\omega_{t/\eps^2})$, we can use the variance decay of $\E_B\{\phi_\xi(\omega_t)\}$ when $t$ is large. For $\eps \phi_\xi(\omega_0)$, since it is independent of the Brownian path, we expect that $e^{iM_t^\eps}$ averages itself. This will be proved by applying a special case of a quantitative martingale central limit theorem, which we present as the following proposition.

\begin{prop} \cite[Theorem~3.2]{mourrat2012kantorovich}
\label{p:2ndQM}
If $M_t$ is a continuous martingale and $\langle M\rangle_t$ is its predictable quadratic variation, $W_t$ is a standard Brownian motion, then
\begin{equation}
\label{e:2ndQM1}
d_{1,k}(M_t,\sigma W_t)\leq (k\vee 1)\E\{ |\langle M\rangle_t-\sigma^2t|\},
\end{equation}
with the distance $d_{k}$ defined as
\begin{equation}
\label{e:2ndQM2}
d_{1,k}(X,Y)=\sup \{|\E\{f(X)-f(Y)\}|: f\in C_b^2(\R), \|f'\|\leq 1, \|f''\|_\infty\leq k\}.
\end{equation}
\begin{rem}
	\label{r:improved}
	In fact, the argument in \cite{mourrat2012kantorovich} simplifies when we assume (as we do here) that the martingale $M_t$ is continuous. In this case, the multiplicative constant $(k \vee 1)$ in \eqref{e:2ndQM1} can be replaced by $k$, and the condition $\|f'\|\le 1$ in \eqref{e:2ndQM2} can be dropped.
\end{rem}
\end{prop}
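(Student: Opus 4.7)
The plan is to invoke the Dambis--Dubins--Schwarz (DDS) time change, which collapses the problem to evaluating a single Brownian motion at two stopping times and then applying Itô's formula. Since $M$ is a continuous martingale with $M_0=0$, DDS yields a standard Brownian motion $B$ (on a possibly enlarged probability space, if $\langle M\rangle_\infty<\infty$) such that $M_t = B_{\langle M\rangle_t}$ for every $t\ge 0$; moreover, $\sigma W_t$ has the same law as $B_{\sigma^2 t}$, so by coupling we may work on a single probability space with $\sigma W_t = B_{\sigma^2 t}$. The random time $\tau := \langle M\rangle_t$ is a stopping time for the (augmented) filtration of $B$, since its right-inverse $u\mapsto \inf\{s : \langle M\rangle_s > u\}$ is a stopping time for $M$.

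For any $f\in C_b^2(\R)$ with $\|f'\|_\infty\le 1$ and $\|f''\|_\infty\le k$, Itô's formula applied to $f(B_u)$ on $[0,\tau]$ reads
\begin{equation*}
f(B_\tau) - f(0) = \int_0^\tau f'(B_s)\,dB_s + \tfrac{1}{2}\int_0^\tau f''(B_s)\,ds.
\end{equation*}
Since $\|f'\|_\infty\le 1$ and $\E[\tau]<\infty$, the stochastic integral is a uniformly integrable martingale with vanishing expectation, so
\begin{equation*}
\E f(B_\tau) \;=\; f(0) + \tfrac{1}{2}\,\E\int_0^\tau f''(B_s)\,ds.
\end{equation*}
Applying this identity once with $\tau=\langle M\rangle_t$ (assumed integrable) and once with $\tau=\sigma^2 t$ (deterministic), and subtracting, one obtains the key identity
\begin{equation*}
\E f(M_t) - \E f(\sigma W_t) \;=\; \tfrac{1}{2}\,\E\int_0^\infty f''(B_s)\,\bigl(\mathbf{1}_{s<\langle M\rangle_t} - \mathbf{1}_{s<\sigma^2 t}\bigr)\,ds.
\end{equation*}

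The pointwise bound $|f''|\le k$, combined with the elementary identity $\int_0^\infty |\mathbf{1}_{s<\langle M\rangle_t} - \mathbf{1}_{s<\sigma^2 t}|\,ds = |\langle M\rangle_t - \sigma^2 t|$, immediately yields $|\E f(M_t) - \E f(\sigma W_t)|\le \tfrac{k}{2}\,\E|\langle M\rangle_t - \sigma^2 t|$. Taking the supremum over admissible $f$ delivers \eqref{e:2ndQM1}, in fact with the improved constant $k/2$; notably, the argument never uses the precise value of $\|f'\|_\infty$ beyond finiteness, which transparently explains the two simplifications promised in Remark~\ref{r:improved}.

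The main obstacles are of bookkeeping rather than substance: (i) invoking DDS cleanly when $\langle M\rangle_\infty$ may be finite, handled by enlarging the probability space with an auxiliary Brownian motion; (ii) verifying that $\langle M\rangle_t$ is a stopping time in $B$'s augmented filtration, which follows from the stopping-time property of its right-inverse; and (iii) ensuring $\E\langle M\rangle_t<\infty$ so that the Itô integral has zero mean, which is already implicit since the bound is otherwise vacuous. The qualitative point is that continuity of $M$ is precisely what permits the DDS representation, reducing the quantitative martingale CLT to a pointwise identity for Brownian motion and sidestepping any Kantorovich-type or coupling machinery.
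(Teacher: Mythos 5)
Your proof is correct. The paper itself does not reproduce a proof of Proposition~\ref{p:2ndQM}—it cites \cite[Theorem~3.2]{mourrat2012kantorovich}—but the DDS time-change plus Itô argument you give is precisely the simplification that Remark~\ref{r:improved} alludes to for continuous martingales. The cited reference has to work around jumps (hence the factor $k \vee 1$ and the auxiliary condition $\|f'\|_\infty \le 1$); your argument isolates why continuity collapses everything to the identity
\begin{equation*}
\E f(M_t)-\E f(\sigma W_t)=\tfrac12\,\E\!\int_0^\infty f''(B_s)\bigl(\mathbf{1}_{s<\langle M\rangle_t}-\mathbf{1}_{s<\sigma^2 t}\bigr)\,ds,
\end{equation*}
and all three technical points you flag (enlarging the space when $\langle M\rangle_\infty<\infty$, $\langle M\rangle_t$ being a stopping time for the time-changed filtration, and $\E\langle M\rangle_t<\infty$ to kill the stochastic integral) are handled correctly. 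Note that you in fact obtain the constant $k/2$, which is sharper than both the $(k\vee 1)$ in the proposition and the $k$ claimed by the remark—this is not an error, just a small improvement over what the paper records.
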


We also need the following second moment estimate of additive functionals of $\omega_s$.

\begin{lem}
\label{l:vbdd}
For any $f\in L^2(\Omega)$, we have
\begin{equation*}
\E\E_B\{(\int_0^t f(\omega_s) \, ds)^2\}
\leq 2t\int_0^t\E\{|\E_B\{f(\omega_{s/2})\}|^2\} \, ds.
\end{equation*}
\end{lem}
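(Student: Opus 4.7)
The plan is to expand the square, apply the Markov property of $(\omega_s)$ to reduce the two-time correlation $\E\E_B\{f(\omega_s) f(\omega_r)\}$ to a one-point expression involving the semigroup generated by $L$, and then exploit reversibility to rewrite this expression as a squared norm.

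By Fubini and symmetry in $r,s$,
$$
\E\E_B\Ll\{\Ll(\int_0^t f(\omega_s)\,ds\Rr)^2\Rr\} = 2\int_0^t\int_0^s \E\E_B\{f(\omega_s)f(\omega_r)\}\,dr\,ds.
$$
Let $(P_\tau)_{\tau\ge 0}$ be the Markov semigroup on $L^2(\Omega)$ generated by $L$ from Lemma~\ref{lem:proEn}; by definition $P_\tau g(\omega) = \E_B\{g(\omega_\tau)\}$ when the process is started from $\omega_0 = \omega$. For $r\le s$, conditioning on the trajectory up to time $r$ and using the Markov property give $\E_B\{f(\omega_s)\mid \omega_r\} = P_{s-r}f(\omega_r)$; taking expectation over $\P$ and invoking the invariance of $\P$ under the process then yields
$$
\E\E_B\{f(\omega_s)f(\omega_r)\} = \la f,\, P_{s-r}f\ra.
$$

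Since the process is reversible under $\P$ (Lemma~\ref{lem:proEn}), $P_\tau$ is self-adjoint on $L^2(\Omega)$; factoring $P_{s-r} = P_{(s-r)/2}\circ P_{(s-r)/2}$ then gives
$$
\la f,\,P_{s-r}f\ra = \la P_{(s-r)/2}f,\,P_{(s-r)/2}f\ra = \E\{|\E_B\{f(\omega_{(s-r)/2})\}|^2\}.
$$
Substituting $u = s-r$ and swapping the order of integration turns the double integral into $\int_0^t (t-u)\,\E\{|\E_B\{f(\omega_{u/2})\}|^2\}\,du \le t\int_0^t \E\{|\E_B\{f(\omega_{u/2})\}|^2\}\,du$, and together with the factor $2$ from the symmetrization this is exactly the claimed bound.

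The only point that requires any care is the Markov identification $\E_B\{f(\omega_s)\mid \omega_r\} = P_{s-r}f(\omega_r)$, which is not entirely immediate because $\omega_s = \tau_{-X_s^\omega}\omega$ depends both on the full environment $\omega$ and on the Brownian path. To justify it I would write $\omega_s = \tau_{-(X_s^\omega-X_r^\omega)}\omega_r$ and observe that, conditionally on $\omega_r$, the increment $X_s^\omega - X_r^\omega$ is distributed as the diffusion with generator $L_1^{\omega_r}$ started at the origin, so its conditional distribution depends on the past only through $\omega_r$. Once this is in place, everything else is routine.
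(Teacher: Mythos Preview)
Your proof is correct and essentially identical to the paper's: both expand the square, use stationarity/Markov property to reduce to the two-point function $\langle f, P_{s-r} f\rangle$, factor via reversibility as $\|P_{(s-r)/2}f\|^2$, and then bound $(t-u)\le t$ after the change of variables. The only difference is cosmetic---you phrase things in semigroup notation while the paper writes the same quantities directly as $\E\E_B\{f(\omega_0)f(\omega_{u-s})\}$---and your closing remark about justifying the Markov identification is a point the paper simply takes for granted under the label ``stationarity.''
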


\begin{proof}
The proof is a standard calculation. First, by stationarity we have 
\begin{equation}
\begin{aligned}
\E\E_B\{(\int_0^t f(\omega_s) \, ds)^2\}=&2\int_{0\leq s\leq u\leq t} \E\E_B\{f(\omega_s)f(\omega_u)\} \, dsdu\\
=&2\int_{0\leq s\leq u\leq t}\E\E_B\{f(\omega_0)f(\omega_{u-s})\} \, dsdu.
\end{aligned}
\end{equation}
Secondly, we change variable $s\mapsto u-s$ and integrate in $u$ to obtain
\begin{equation}
2\int_{0\leq s\leq u\leq t}\E\E_B\{f(\omega_0)f(\omega_{u-s})\} \, dsdu
=2\int_0^t(t-s)\E\E_B\{f(\omega_0)f(\omega_s)\} \, ds.
\end{equation}
By reversibility we further derive
\begin{equation}
\begin{aligned}
2\int_0^t(t-s)\E\E_B\{f(\omega_0)f(\omega_s)\} \, ds
=&2\int_0^t(t-s)\E\{|\E_B\{f(\omega_{s/2})\}|^2\} \, ds\\
\leq &2t\int_0^t\E\{|\E_B\{f(\omega_{s/2})\}|^2\} \, ds.
\end{aligned}
\end{equation}
The proof is complete.
\end{proof}

Now we can combine \eqref{esR1} with the following Lemmas~\ref{l:esR11} and \ref{l:esR12} to complete the proof of Proposition~\ref{p:conR}.

\begin{lem}
\label{l:esR11}
\begin{equation*}
\E\{(\int_{\R^d}|\hat{f}(\xi)| |\E_B\{\phi_\xi(\omega_{t/\eps^2})e^{iM_t^\eps}\}| \, d\xi)^2\}\les 
\Ll|
\begin{array}{ll}
\eps^{\frac23} & \text{if } d =  3, \\
\eps|\log\eps| & \text{if } d =  4, \\
\eps & \text{if } d \ge 5.
\end{array}
\Rr.
\end{equation*}
\end{lem}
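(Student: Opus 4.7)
The strategy is to reduce the claim to a pointwise-in-$\xi$ bound via Cauchy--Schwarz on the Fourier integral, and then to leverage Proposition~\ref{p:vdphi} through the Markov property of $(\omega_s)_{s\ge0}$, with a trivial bound handling the short-time regime. Writing $Y_\xi:=\E_B\{\phi_\xi(\omega_{t/\eps^2})e^{iM_t^\eps}\}$, Cauchy--Schwarz gives
\[
\E\Bigl\{\Bigl(\int_{\R^d}|\hat f(\xi)||Y_\xi|\,d\xi\Bigr)^2\Bigr\}\le \|\hat f\|_{L^1}\int |\hat f(\xi)|\,\E\{|Y_\xi|^2\}\,d\xi,
\]
so the task reduces to a good pointwise-in-$\xi$ bound on $\E\{|Y_\xi|^2\}$; thanks to the smoothness of $f$, the weighted moments of $|\hat f(\xi)|$ against polynomials in $|\xi|$ are finite. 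The trivial bound $|e^{iM_t^\eps}|=1$ combined with Cauchy--Schwarz in $B$ and the moment estimate of Theorem~\ref{t:bdCo} yields $\E\{|Y_\xi|^2\}\les|\xi|^2$, which will be used for $t$ below a dimension-dependent threshold.

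The quantitative improvement for larger $t$ comes from exposing the structure of $\phi_\xi(\omega_{t/\eps^2})$ via the Markov property. Let $\mathcal G_\tau$ denote the $\sigma$-algebra generated by $(B_s)_{s\le\tau}$ for some intermediate $\tau\in(0,t/\eps^2)$. Then $\E_B\{\phi_\xi(\omega_{t/\eps^2})\mid\mathcal G_\tau\}=h(\omega_\tau)$, where $h(\omega):=\E_B\{\phi_\xi(\omega_{t/\eps^2-\tau})\}$; Proposition~\ref{p:vdphi} gives $\E\{h^2\}\les|\xi|^2\,r_d(t/\eps^2-\tau)$, with $r_d$ the dimension-dependent variance-decay rate. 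Decomposing $\phi_\xi(\omega_{t/\eps^2})=h(\omega_\tau)+\Delta$, the piece $\E_B\{h(\omega_\tau)e^{iM_t^\eps}\}$ is bounded in modulus by $\E_B\{|h(\omega_\tau)|\}$, and its $L^2(\P)$-norm inherits the decay of $h$ by stationarity of $\omega_\cdot$. The remaining martingale-difference piece $\E_B\{\Delta\,e^{iM_t^\eps}\}$ is handled by subtracting the $\mathcal G_\tau$-conditional expectation of $e^{iM_t^\eps}$ (which, by $\E_B\{\Delta\mid\mathcal G_\tau\}=0$, does not change the integral) and applying Cauchy--Schwarz; the resulting $\mathcal G_\tau$-conditional variance of $e^{iM_t^\eps}$ is of size $\min(1,|\xi|^2 t)$ after comparing $M_t^\eps-M_\tau^\eps$ with a Gaussian via the quantitative martingale CLT (Proposition~\ref{p:2ndQM}).

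Choosing $\tau$ to balance these contributions (for instance $\tau\sim t/(2\eps^2)$ in the long-time regime, and falling back on the trivial bound when $t$ is too small for the variance decay to kick in) and integrating the resulting estimate against $|\hat f(\xi)|$ using the smoothness of $f$ produces the claimed rates $\eps^{2/3}$, $\eps|\log\eps|$, $\eps$ in dimensions $3$, $4$, $\ge 5$. The main obstacle I anticipate is controlling the martingale-difference piece: a naïve bound only recovers the unconditional size $|\xi|^2$ of the corrector with no $\eps$-smallness, so one must genuinely combine the decay from Proposition~\ref{p:vdphi} with the quantitative CLT from Proposition~\ref{p:2ndQM} and use the smoothness of $f$ to absorb the extra factor of $|\xi|^2 t$ during integration in $\xi$.
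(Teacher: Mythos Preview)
Your overall strategy---condition at an intermediate time and exploit the variance decay of Proposition~\ref{p:vdphi} on one piece, while controlling the other piece by the smallness of a martingale increment---is exactly the one the paper uses. However, your specific implementation has a genuine gap.

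With your suggested choice $\tau\sim t/(2\eps^2)$, the ``martingale-difference'' piece $\E_B\{\Delta\,e^{iM_t^\eps}\}$ carries no $\eps$-smallness at all. Indeed, $\E\E_B\{|\Delta|^2\}\les|\xi|^2$ (the corrector has no reason to be small), and the $\mathcal G_\tau$-conditional variance of $e^{iM_t^\eps}$ is at most the quadratic variation increment $\eps^2\int_\tau^{t/\eps^2}(\cdot)^2\,ds$, which is of order $|\xi|^2 t$ in macroscopic units---precisely the $\min(1,|\xi|^2 t)$ you wrote, with no factor of $\eps$. So after Cauchy--Schwarz this piece contributes $|\xi|^4 t$ to the $L^2(\P)$ bound, and after $\xi$-integration you recover only an $O(1)$ estimate. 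Invoking Proposition~\ref{p:2ndQM} does not help: the quantitative CLT controls the distance of $\E_B\{e^{i(M_t^\eps-M_u^\eps)}\mid\mathcal G_\tau\}$ to a Gaussian characteristic function, but what you need is the \emph{conditional variance} of $e^{iM_t^\eps}$, and that is governed directly by the quadratic variation increment, which is order one here.

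The paper's fix is to choose the intermediate time differently. In the paper's notation, one writes $e^{iM_t^\eps}=e^{iM_u^\eps}+(e^{iM_t^\eps}-e^{iM_u^\eps})$ for a macroscopic time $u\in(0,t)$ \emph{close to}~$t$. The first term, after conditioning on $\mathcal F_{u/\eps^2}$, is controlled by $\E\{|\E_B\{\phi_\xi(\omega_{(t-u)/\eps^2})\}|^2\}\les|\xi|^2 r_d((t-u)/\eps^2)$ via Proposition~\ref{p:vdphi}. The second term is bounded by the elementary inequality $|e^{ix}-e^{iy}|\le|x-y|$, giving a contribution $|\xi|^4(t-u)$ after moment bounds on $\phi_\xi$ and on the martingale increment (no CLT needed). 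One then balances $(t-u)$ against $r_d((t-u)/\eps^2)$: for $d=3$ this yields $t-u\sim\eps^{2/3}$ and the rate $\eps^{2/3}$; for $d\ge5$ it yields $t-u\sim\eps$ and the rate $\eps$. In your framework this corresponds to taking $\tau=u/\eps^2$ with $t/\eps^2-\tau$ of order $\eps^{-4/3}$ (for $d=3$), not $\tau\sim t/(2\eps^2)$.
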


\begin{proof}
First, we have for any $u\in (0,t)$ that 
\begin{equation}
\E_B\{\phi_\xi(\omega_{t/\eps^2})e^{iM_u^\eps}\}=\E_B\{\E_B\{\phi_\xi(\omega_{t/\eps^2})|\mathcal{F}_{u/\eps^2}\}e^{iM_u^\eps}\},
\end{equation}
where $\mathcal{F}_s$ is the natural filtration associated with $B_s$. By the stationarity of $\omega_s$, we obtain
\begin{equation}
\begin{aligned}
\E\{|\E_B\{\phi_\xi(\omega_{t/\eps^2})e^{iM_u^\eps}\}|^2\}\leq& \E\E_B\{|\E_B\{\phi_\xi(\omega_{t/\eps^2})|\mathcal{F}_{u/\eps^2}\}|^2\}\\
=&
\E\{|\E_B\{\phi_\xi(\omega_{(t-u)/\eps^2})\}|^2\}.
\end{aligned}
\end{equation}


Secondly, we have
\begin{equation}
\begin{aligned}
&\int_{\R^d}|\hat{f}(\xi)|\E\E_B\{|\phi_\xi(\omega_{t/\eps^2})(e^{iM_t^\eps}-e^{iM_u^\eps})|^2\} \, d\xi 
\\
\leq  &\int_{\R^d}|\hat{f}(\xi)|\E\E_B\{|\phi_\xi(\omega_{t/\eps^2})|^2|M_t^\eps-M_u^\eps|^2\} \, d\xi
\\
\leq & \int_{\R^d}|\hat{f}(\xi)|\sqrt{\E\E_B\{|\phi_\xi(\omega_{t/\eps^2})|^4\}}\sqrt{\E\E_B\{|M_t^\eps-M_u^\eps|^4\}} \, d\xi.
\end{aligned}
\end{equation}
By moment bounds of $\phi_\xi$, the first factor $\sqrt{\E\E_B\{|\phi_\xi(\omega_{t/\eps^2})|^4\}}\les |\xi|^2$. For the second factor, we apply moment inequalities of martingales to derive
\begin{equation}
\sqrt{\E\E_B\{|M_t^\eps-M_u^\eps|^4\}}\les \sqrt{\E\E_B\{|\langle M^\eps\rangle_t-\langle M^\eps\rangle_u|^2\}},
\end{equation}
with $\langle M^\eps\rangle_t$ the quadratic variation of $M_t^\eps$:
\begin{equation}
\begin{aligned}
\langle M^\eps\rangle_t=&\sum_{j=1}^d \eps^2\int_0^{t/\eps^2}\left(\sum_{i=1}^d (D_i\phi_\xi(\omega_s)+\xi_i)\sigma_{ij}(\omega_s)\right)^2 \,ds\\
=&\eps^2\int_0^{t/\eps^2}(\xi+D\phi_{\xi}(\omega_s))^Ta(\omega_s)(\xi+D\phi_\xi(\omega_s)) \,ds.
\end{aligned}
\end{equation}
By moment bounds of $D\phi_{\xi}$, we have $\sqrt{\E\E_B\{|M_t^\eps-M_u^\eps|^4\}}\les (t-u)|\xi|^2$. Therefore, we have obtained
\begin{equation}
\int_{\R^d}|\hat{f}(\xi)|\E\E_B\{|\phi_\xi(\omega_{t/\eps^2})(e^{iM_t^\eps}-e^{iM_u^\eps})|^2\} \, d\xi \les t-u.
\end{equation}

Now we can write 
\begin{equation}
\E_B\{\phi_\xi(\omega_{t/\eps^2})e^{iM_t^\eps}\}=\E_B\{\phi_\xi(\omega_{t/\eps^2})(e^{iM_t^\eps}-e^{iM_u^\eps})\}+\E_B\{\phi_\xi(\omega_{t/\eps^2})e^{iM_u^\eps}\},
\end{equation} and derive
\begin{equation}
\int_{\R^d}|\hat{f}(\xi)|\E\{|\E_B\{\phi_\xi(\omega_{t/\eps^2})e^{iM_t^\eps}\}|^2\} \, d\xi\les t-u+\int_{\R^d}|\hat{f}(\xi)|\E\{|\E_B\{\phi_\xi(\omega_{(t-u)/\eps^2})\}|^2\} \, d\xi.
\end{equation}
By Proposition~\ref{p:vdphi},
\begin{equation}
\int_{\R^d}|\hat{f}(\xi)|\E\{|\E_B\{\phi_\xi(\omega_{t/\eps^2})e^{iM_t^\eps}\}|^2\} \, d\xi\les t-u+\Ll|
\begin{array}{ll}
(\frac{\eps^2}{t-u})^{\frac12} & \text{if } d =  3, \\
\frac{\eps^2}{t-u}\log(2+\frac{t-u}{\eps^2}) & \text{if } d =  4, \\
\frac{\eps^2}{t-u} & \text{if } d \ge 5.
\end{array}
\Rr.
\end{equation}
After optimizing with respect to $u$ on the r.h.s.\ of the above display, we complete the proof.
\end{proof}

\begin{lem}
\label{l:esR12}
\begin{equation*}
\E\{|\frac{1}{(2\pi)^d}\int_{\R^d}\hat{f}(\xi)e^{i\xi\cdot x}i\eps \phi_\xi(\omega_0)\E_B\{e^{iM_t^\eps}\} \, d\xi-\eps \nabla u_{\hom}(t,x)\cdot \phi(\tau_{-x/\eps}\omega)|\}\les \eps^2\sqrt{t}.
\end{equation*}
\end{lem}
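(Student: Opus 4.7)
The plan is to exploit the linearity of the corrector: by linearity of \eqref{corEqP} and passage to the limit (Theorem~\ref{t:bdCo}) we have $\phi_\xi = \xi\cdot \phi$, and since $X_0^\omega = x/\eps$, $\phi_\xi(\omega_0) = \xi\cdot \phi(\tau_{-x/\eps}\omega)$, so the vector $\phi(\tau_{-x/\eps}\omega)$ factors out of the Fourier integral. Writing $\sigma_\xi^2:=\xi^T\bar A\xi$ and recalling the Fourier representation
$$
\nabla u_{\hom}(t,x)=\frac{1}{(2\pi)^d}\int_{\R^d}\hat f(\xi)\,i\xi\,e^{i\xi\cdot x}e^{-\sigma_\xi^2 t/2}\,d\xi,
$$
the quantity inside the absolute value in the statement becomes $\eps\,\phi(\tau_{-x/\eps}\omega)\cdot Y$, where
$$
Y:=\frac{1}{(2\pi)^d}\int_{\R^d}\hat f(\xi)\,i\xi\,e^{i\xi\cdot x}\Ll(\E_B\{e^{iM_t^\eps}\}-e^{-\sigma_\xi^2 t/2}\Rr)d\xi.
$$

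By the Cauchy-Schwarz inequality and the uniform $L^2(\Omega)$ bound on $\phi$ from Theorem~\ref{t:bdCo}, $\E|\phi\cdot Y|\les \sqrt{\E|Y|^2}$, and Minkowski's integral inequality yields
$$
\sqrt{\E|Y|^2}\les \int_{\R^d}|\hat f(\xi)||\xi|\sqrt{\E\bigl|\E_B\{e^{iM_t^\eps}\}-e^{-\sigma_\xi^2 t/2}\bigr|^2}\,d\xi.
$$
It therefore suffices to prove the pointwise-in-$\xi$ estimate $\E|\E_B\{e^{iM_t^\eps}\}-e^{-\sigma_\xi^2 t/2}|^2\les \eps^2 t|\xi|^4$: the $\xi$-integration (the moments $\int|\hat f(\xi)||\xi|^k\,d\xi$ are finite for all $k$ since $f$ is Schwartz) then gives $\sqrt{\E|Y|^2}\les \eps\sqrt t$, and the prefactor $\eps$ produces the claimed $\eps^2\sqrt t$.

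To get that $L^2$ bound, I apply the quantitative martingale CLT (Proposition~\ref{p:2ndQM} with the continuous-martingale improvement of Remark~\ref{r:improved}) separately to $\cos$ and $\sin$, each of which has $\|\cdot''\|_\infty\le 1$. Conditionally on $\omega$, $M_t^\eps$ is a continuous martingale and $\E_B\{e^{i\sigma_\xi W_t}\}=e^{-\sigma_\xi^2 t/2}$, so $\P$-a.s.,
$$
|\E_B\{e^{iM_t^\eps}\}-e^{-\sigma_\xi^2 t/2}|\les \E_B|\langle M^\eps\rangle_t-\sigma_\xi^2 t|.
$$
Squaring, using Jensen's inequality inside $\E_B$, and substituting $\langle M^\eps\rangle_t-\sigma_\xi^2 t=\eps^2\int_0^{t/\eps^2}\psi_\xi(\omega_s)\,ds$ gives
$$
\E|\E_B\{e^{iM_t^\eps}\}-e^{-\sigma_\xi^2 t/2}|^2\les \eps^4\,\E\E_B\Ll|\int_0^{t/\eps^2}\psi_\xi(\omega_s)\,ds\Rr|^2.
$$
Lemma~\ref{l:vbdd} bounds the double expectation by $2(t/\eps^2)\int_0^{t/\eps^2}\E|\E_B\{\psi_\xi(\omega_{s/2})\}|^2\,ds$, which in turn is $\les |\xi|^4$ by Proposition~\ref{p:vdpsi}; altogether this contributes the desired $\eps^2 t|\xi|^4$.

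The only subtle point—the reason the argument is not completely routine—is the choice to use the $L^2$ (rather than $L^1$) form of the martingale-CLT error. Had one taken $\E|\E_B\{e^{iM_t^\eps}\}-e^{-\sigma_\xi^2 t/2}|\les \E\E_B|\langle M^\eps\rangle_t-\sigma_\xi^2 t|\les \eps\sqrt t|\xi|^2$ and then extracted $\phi$ by Cauchy-Schwarz, one would lose an extra $\sqrt\eps$ and obtain only $\eps^{3/2}t^{1/4}$; applying Jensen inside $\E_B$ before squaring and invoking the $L^1$-in-time bound of Proposition~\ref{p:vdpsi} is what recovers the sharp $\eps^2\sqrt t$ rate.
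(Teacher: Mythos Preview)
Your proof is correct and follows essentially the same route as the paper: both apply the quantitative martingale CLT (Proposition~\ref{p:2ndQM}) to bound $|\E_B\{e^{iM_t^\eps}\}-e^{-\sigma_\xi^2 t/2}|$ by $\E_B|\langle M^\eps\rangle_t-\sigma_\xi^2 t|$, separate the corrector via Cauchy--Schwarz and the moment bound of Theorem~\ref{t:bdCo}, and then control $\E\E_B|\langle M^\eps\rangle_t-\sigma_\xi^2 t|^2$ by Lemma~\ref{l:vbdd} and Proposition~\ref{p:vdpsi}. The only cosmetic difference is that you first factor $\phi_\xi(\omega_0)=\xi\cdot\phi(\tau_{-x/\eps}\omega)$ out of the $\xi$-integral before applying Cauchy--Schwarz, whereas the paper keeps $\phi_\xi$ inside and applies Cauchy--Schwarz pointwise in $\xi$.
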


\begin{proof}
For almost every fixed $\omega\in \Omega$ and $\eps>0$, 
\begin{equation}
M_t^\eps=\sum_{j=1}^d \eps\int_0^{t/\eps^2}\sum_{i=1}^d(D_i\phi_{\xi}(\omega_s)+\xi_i)\sigma_{ij}(\omega_s) \, dB_s^j
\end{equation}
is a continuous square integrable martingale on $(\Sigma,\mathcal{A},\P_B)$, so by Proposition~\ref{p:2ndQM}, we have
\begin{equation}
|\E_B\{e^{iM_t^\eps}\}-e^{-\frac12\sigma_\xi^2t}|\leq \E_B\{|\langle M^\eps\rangle_t-\sigma_\xi^2 t|\},
\end{equation}
where $\langle M^\eps\rangle_t$ is the quadratic variation of $M_t^\eps$:
\begin{equation}
\begin{aligned}
\langle M^\eps\rangle_t=&\sum_{j=1}^d \eps^2\int_0^{t/\eps^2}\left(\sum_{i=1}^d (D_i\phi_\xi(\omega_s)+\xi_i)\sigma_{ij}(\omega_s)\right)^2 \, ds\\
=&\eps^2\int_0^{t/\eps^2}(\xi+D\phi_{\xi}(\omega_s))^Ta(\omega_s)(\xi+D\phi_\xi(\omega_s)) \, ds,
\end{aligned}
\end{equation}
and $\sigma_\xi^2=\xi^T\bar{A}\xi$, with the homogenized matrix $\bar{A}$ given by \eqref{hoMa}. 

Thus we have derived
\begin{equation*}
\begin{aligned}
&\int_{\R^d}|\hat{f}(\xi)|\E\{|\phi_\xi(\omega_0)(\E_B\{e^{iM_t^\eps}\}-e^{-\frac12\sigma_\xi^2t})|\} \, d\xi\\
 \les &\int_{\R^d}|\hat{f}(\xi)||\xi|\sqrt{\E\E_B\{|\langle M^\eps\rangle_t-\sigma_\xi^2 t|^2\}} \, d\xi.
\end{aligned}
\end{equation*}
By recalling \eqref{defpsi}, $\langle M^\eps\rangle_t-\sigma_\xi^2 t=\eps^2\int_0^{t/\eps^2}\psi_\xi(\omega_s) \, ds$, so we apply Lemma~\ref{l:vbdd} and Proposition~\ref{p:vdpsi} to obtain
\begin{align}
\label{bdpsi}
\E\E_B\{|\langle M^\eps\rangle_t-\sigma_\xi^2 t|^2\} 
& = \E\E_B\{|\eps^2\int_0^{t/\eps^2}\psi_\xi(\omega_s) \, ds|^2\} \notag \\
& \les \eps^2t\int_0^{t/\eps^2}\E\{|\E_B\{\psi_\xi(\omega_{s/2})\}|^2\} \, ds \notag \\
& \les \eps^2t|\xi|^4.
\end{align}

To summarize, we have
\begin{equation}
\begin{aligned}
&\E\{|\frac{1}{(2\pi)^d}\int_{\R^d}\hat{f}(\xi)e^{i\xi\cdot x}i\eps \phi_\xi(\omega_0)(\E_B\{e^{iM_t^\eps}\}-e^{-\frac12\sigma_\xi^2t}) \, d\xi|\}\\
\les &\eps\int_{\R^d}|\hat{f}(\xi)||\xi|\sqrt{\E\E_B\{|\langle M^\eps\rangle_t-\sigma_\xi^2 t|^2\}} \, d\xi
\les  \eps^2\sqrt{t}.
\end{aligned}
\end{equation}
Since $\phi_\xi=\sum_{k=1}^d \xi_k\phi_{e_k}$ and $\omega_0=\tau_{-X_0^\omega}\omega=\tau_{-x/\eps}\omega$, it is straightforward to check that
\begin{equation}
\frac{1}{(2\pi)^d}\int_{\R^d}\hat{f}(\xi)e^{i\xi\cdot x}i\eps \phi_\xi(\omega_0)e^{-\frac12\sigma_\xi^2t} \, d\xi=\eps\nabla u_{\hom}(t,x)\cdot \phi(\tau_{-x/\eps}\omega).
\end{equation}
The proof is complete.
\end{proof}


\section{An analysis of the martingale}
\label{s:m}

We define the error coming from the martingale part in \eqref{e:decomp-error} as
\begin{equation}
\begin{aligned}
\cE_3:=&\frac{1}{(2\pi)^d}\int_{\R^d} \hat{f}(\xi)e^{i\xi \cdot x}(\E_B\{e^{iM_t^\eps}\}-e^{-\frac12\xi^T\bar{A}\xi t}) \, d\xi\\
=&\frac{1}{(2\pi)^d}\int_{\R^d} \hat{f}(\xi)e^{i\xi \cdot x}(\E_B\{e^{iM_t^\eps}\}-e^{-\frac12\sigma_\xi^2 t}) \, d\xi.
\end{aligned}
\end{equation}
By the estimate in \eqref{bdpsi}, we already have
\begin{equation}
\label{bd2ndM}
\begin{aligned}
\E\{|\cE_3|^2\}\les &\int_{\R^d} |\hat{f}(\xi)| \E\{|\E_B\{e^{iM_t^\eps}\}-e^{-\frac12\sigma_\xi^2 t}|^2\} \, d\xi\\
\leq &\int_{\R^d}|\hat{f}(\xi)| \E\E_B\{|\langle M^\eps\rangle_t-\sigma_\xi^2t|^2\} \, d\xi \les \eps^2t.
\end{aligned}
\end{equation}
Thus $\cE_3$ is of order at most $\eps$, and we need to refine this estimate to show that it is actually of lower order.
The following is the main result of this section.
\begin{prop}
\label{p:conM}
\begin{equation}
\label{e:conM}
\E\{|\cE_3|\}\leq \eps C_\eps(t)
\end{equation}
 with $C_\eps(t)\to 0$ as $\eps\to0$ and $C_\eps(t)\leq C(1+t)$ for some constant $C>0$.
\end{prop}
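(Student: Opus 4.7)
The $L^2$ estimate \eqref{bd2ndM} combined with Cauchy-Schwarz already gives $\E\{|\cE_3|\}\lesssim \eps\sqrt{t}$, which yields the linear bound $C_\eps(t)\le C(1+t)$. The substance is that $C_\eps(t)\to 0$ as $\eps\to 0$, i.e.\ that $\E\{|\cE_3|\}$ is genuinely $o(\eps)$ for each fixed $t$, and this requires refining the bare $L^2$ estimate.

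My plan is first to derive an exact representation of the inner difference via It\^o's formula. Writing $\td\omega_s := \omega_{s/\eps^2}$ and using $\tfrac{d}{ds}\langle M^\eps\rangle_s = \sigma_\xi^2 + \psi_\xi(\td\omega_s)$, the linear ODE satisfied by $s\mapsto \E_B\{e^{iM_s^\eps}\}$ integrates to
\begin{equation*}
\E_B\{e^{iM_t^\eps}\} - e^{-\tfrac12\sigma_\xi^2 t} = -\tfrac12\int_0^t e^{-\tfrac12\sigma_\xi^2(t-s)}\E_B\{e^{iM_s^\eps}\psi_\xi(\td\omega_s)\}\,ds,
\end{equation*}
reducing $\cE_3$ to an explicit $s$-integral that I must bound in $L^1(\Omega)$ after integration against $\hat f(\xi)e^{i\xi\cdot x}$.

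Next I would exploit the Markov property: for a lag $\tau>0$ to be chosen, conditioning at time $s-\tau$ factors the inner expectation as $\E_B\{e^{iM_{s-\tau}^\eps}G_\tau^\eps(\td\omega_{s-\tau})\}$, where $G_\tau^\eps(\omega):=\E_B^{\omega}\{e^{i\tilde M_\tau^\eps}\psi_\xi(\td\omega_\tau)\}$. Using $|e^{iM_{s-\tau}^\eps}|\le 1$ and stationarity reduces matters to $\E\{|G_\tau^\eps(\omega)|\}$. Splitting $e^{i\tilde M_\tau^\eps}=1+(e^{i\tilde M_\tau^\eps}-1)$, the ``$1$'' piece yields $(P_{\tau/\eps^2}\psi_\xi)(\omega)$ whose $L^2(\Omega)$-norm is $O(\eps|\xi|^2/\sqrt\tau)$ thanks to Proposition~\ref{p:vdpsi} and the monotonicity of $u\mapsto \|P_u\psi_\xi\|_{L^2}^2$; the oscillatory piece is $O(\sqrt\tau|\xi|^3)$ via $|e^{iy}-1|\le |y|$ combined with martingale moment bounds and Theorem~\ref{t:bdCo}.

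This direct balance alone yields only $O(\sqrt\eps)$, so a further iteration is needed. I plan to use the Kipnis-Varadhan representation $\psi_\xi=-L\chi_\xi$, well-defined since $\psi_\xi$ has mean zero and $\int_0^\infty \|P_u\psi_\xi\|_{L^2}^2\,du<\infty$ by Proposition~\ref{p:vdpsi}. Applying It\^o to $\chi_\xi(\td\omega_s)$ yields
\begin{equation*}
\psi_\xi(\td\omega_s)\,ds = -\eps^2\,d\chi_\xi(\td\omega_s) + \eps\,(D\chi_\xi(\td\omega_s))^T\sigma(\td\omega_s)\,d\td B_s,
\end{equation*}
and substituting this followed by integration by parts (tracking the cross-variation with $dM^\eps$) decomposes the integrand into $O(\eps^2)$ boundary contributions together with an $O(\eps)$-order remainder driven by $V_\xi:=(\xi+D\phi_\xi)^T a\, D\chi_\xi$, whose mean vanishes by integration by parts together with the divergence-free identity $\sum_j D_j(a_{ij}(\xi_i+D_i\phi_\xi))=0$ coming from the corrector equation. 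The main obstacle is then to show that the $L^1(\Omega)$-norm of this mean-zero $\eps$-term is truly $o(\eps)$ rather than merely $O(\eps)$ as supplied by its $L^2$-norm; I expect this requires further iteration combined with the variance-decay Propositions~\ref{p:conRe} and~\ref{p:vdpsi} applied to $V_\xi$ in place of $\psi_\xi$, together with the covariance estimates of Propositions~\ref{p:corCo}--\ref{p:corQu}.
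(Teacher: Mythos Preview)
Your Duhamel representation and the Kipnis--Varadhan substitution are correct and lead to essentially the same cross term the paper identifies: your $V_\xi=(\xi+D\phi_\xi)^T a\,D\chi_\xi$ is $2\sum_{i,j,k}\xi_i\xi_j\xi_k c_{ijk}$ in the paper's notation, and your observation that $\E\{V_\xi\}=0$ via the divergence-free identity for $a(\xi+D\phi_\xi)$ is exactly the paper's Lemma that $c_{ijk}=0$. So the two arguments agree on the heart of the matter. (One technical point: $\chi_\xi$ itself need not lie in $L^2(\Omega)$, only $D\chi_\xi$ does by Lemma~\ref{l:kv}; your ``$O(\eps^2)$ boundary terms'' are therefore only $o(\eps)$ after regularization $\chi_\xi\leadsto\Psi_{\lambda,\xi}$ with $\lambda=\eps^2$, using $\lambda\langle\Psi_{\lambda,\xi},\Psi_{\lambda,\xi}\rangle\to0$. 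This is harmless but should be made explicit.)

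The genuine gap is your last paragraph. After the substitution you are left with
\[
i\eps\int_0^t e^{-\frac12\sigma_\xi^2(t-s)}\,\E_B\{e^{iM_s^\eps}V_\xi(\td\omega_s)\}\,ds,
\]
and you need this to be $o(\eps)$. Knowing $\E\{V_\xi\}=0$ is not enough: $V_\xi(\td\omega_s)$ is not deterministic in the limit, so the integrand does not obviously vanish. Your proposed fix---``variance-decay Propositions applied to $V_\xi$''---is not available: Propositions~\ref{p:corCo}--\ref{p:corQu} and~\ref{p:vdphi}--\ref{p:vdpsi} treat $\phi_\xi$ and $\psi_\xi$, and their proofs rely on Green-function bounds for the \emph{first}-level corrector. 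To get covariance decay for $V_\xi$ you would need decorrelation estimates on $D\Psi_\xi$ (the gradient of the corrector associated with $\psi_\xi$), which are not established anywhere in the paper and would require substantial new work. Moreover, even if you had such decay, your ``further iteration'' replaces $V_\xi$ by yet another mean-zero functional built from the \emph{next}-level corrector, and there is no argument that the scheme terminates.

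The paper avoids this entirely. Instead of the Duhamel formula it uses the third-order quantitative martingale CLT (Proposition~\ref{prop:3rdQM}) to replace $\E_B\{e^{iM_t^\eps}\}-e^{-\frac12\sigma_\xi^2 t}$ by $-\tfrac12\E_B\{e^{iM_t^\eps}(\langle M^\eps\rangle_t-\sigma_\xi^2 t)\}$ up to an $O(\eps^{3/2})$ error. After writing $\langle M^\eps\rangle_t-\sigma_\xi^2 t=\eps(\mathcal{R}_t^\eps+\mathcal{M}_t^\eps)$ via Kipnis--Varadhan, the remainder $\mathcal{R}_t^\eps$ is $o(1)$ by Lemma~\ref{l:kv}, and the martingale piece is handled by the \emph{joint} martingale CLT for $(M_t^\eps,\mathcal{M}_t^\eps)$: this pair converges to a Gaussian vector $(N_1,N_2)$, so $\E_B\{e^{iM_t^\eps}\mathcal{M}_t^\eps\}\to\E\{e^{iN_1}N_2\}=ie^{-\frac12\sigma_\xi^2 t}\E\{N_1N_2\}$, and the covariance $\E\{N_1N_2\}$ is precisely $t\E\{V_\xi\}=0$. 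No iteration, no variance decay for $V_\xi$, is needed---the joint CLT computes the limit in one step. This is the missing idea in your approach.
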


The proof of Proposition~\ref{p:conM} can be decomposed into two parts. One part consists in showing that \eqref{e:conM} holds with $\cE_3$ replaced by 
$$
\cE_3-\eps t\sum_{i,j,k=1}^d c_{ijk}\partial_{x_ix_jx_k}u_{\hom}(t,x)
$$
for some constants $c_{ijk}$ defined below, see \eqref{defB}. In other words, 
$$
\eps t\sum_{i,j,k=1}^d c_{ijk}\partial_{x_ix_jx_k}u_{\hom}(t,x)
$$
is what we find to be the deterministic error at the order of $\eps$. The second part consists in observing that actually, the constants $c_{ijk}$ are all equal to zero!

We begin by defining $c_{ijk}$, and then observing that they are in fact zero. The following lemma from the proof of \cite[Theorem~1.8]{kipnis1986central} is needed, and we present a proof here for the sake of convenience.
\begin{lem}
\label{l:kv}
For any $V\in L^2(\Omega)$ with mean zero, let $\varphi_\lambda$ be the regularized corrector, i.e., $(\lambda-L)\varphi_\lambda=V$.
If 
\begin{equation*}
\E\E_B\Ll\{\left(\frac{1}{t^\frac12}\int_0^tV(\omega_s) \, ds\right)^2\Rr\}\leq C
\end{equation*}
for some constant $C>0$ independent of $t$, then $\lambda\langle \varphi_\lambda,\varphi_\lambda\rangle\to 0$ and $D_k\varphi_\lambda$ converges in $L^2(\Omega)$, $k=1,\ldots,d$.
\end{lem}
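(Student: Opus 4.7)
\emph{Proof proposal.} The plan is to use spectral calculus for the self-adjoint non-negative operator $-L$ on $L^2(\Omega)$; recall from Lemma~\ref{lem:proEn} that $(\omega_s)_{s\ge 0}$ is reversible, so $L$ is self-adjoint. Let $e_V$ denote the scalar spectral measure of $-L$ associated with $V$, so that for every bounded Borel function $g$,
\begin{equation*}
\langle g(-L)V, V\rangle = \int_0^\infty g(x)\,de_V(x).
\end{equation*}
Since $\E\{V\}=0$, $V$ is orthogonal to the kernel of $L$ (the constants), hence $e_V(\{0\})=0$.

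First I would translate the hypothesis into an integrability statement on $e_V$. By stationarity, reversibility and spectral calculus,
\begin{equation*}
\E\E_B\{V(\omega_0)V(\omega_s)\} = \langle e^{sL}V, V\rangle = \int_0^\infty e^{-sx}\,de_V(x),
\end{equation*}
and an elementary Fubini computation as in the proof of Lemma~\ref{l:vbdd} gives
\begin{equation*}
\frac{1}{t}\,\E\E_B\Ll\{\Ll(\int_0^t V(\omega_s)\,ds\Rr)^{\!2}\Rr\}
= 2\int_0^\infty \frac{tx-1+e^{-tx}}{tx^2}\,de_V(x).
\end{equation*}
The integrand is increasing in $t$ (write it as $(1-(1-e^{-tx})/(tx))/x$ and note that $(1-e^{-y})/y$ is decreasing in $y$) and converges pointwise to $1/x$ as $t\to\infty$, so the hypothesis together with monotone convergence yields $\int_0^\infty x^{-1}\,de_V(x) \le C/2<\infty$.

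Both conclusions then follow from dominated convergence on the spectral side. Since $\varphi_\lambda = (\lambda-L)^{-1}V$,
\begin{equation*}
\lambda\langle\varphi_\lambda,\varphi_\lambda\rangle = \int_0^\infty \frac{\lambda}{(\lambda+x)^2}\,de_V(x),
\end{equation*}
whose integrand tends pointwise to $0$ on $(0,\infty)$ and is dominated by $1/x$ (via $\lambda x\le(\lambda+x)^2$), so the first conclusion follows. For the gradient, I would use that the Dirichlet form of $-L$ on $H^1(\Omega)$ coincides with $\tfrac12\langle D\varphi, a\,D\varphi\rangle$; by the ellipticity bound \eqref{uniEll},
\begin{equation*}
\sum_{k=1}^d \|D_k\varphi_\lambda-D_k\varphi_\mu\|^2 \les \langle -L(\varphi_\lambda-\varphi_\mu),\varphi_\lambda-\varphi_\mu\rangle
= \int_0^\infty x\Ll(\frac{1}{\lambda+x}-\frac{1}{\mu+x}\Rr)^{\!2}\,de_V(x).
\end{equation*}
The integrand on the right tends pointwise to $0$ as $\lambda,\mu\to 0$ and is dominated by $4/x$; dominated convergence shows that $(D_k\varphi_\lambda)_{\lambda>0}$ is Cauchy in $L^2(\Omega)$ for each $k$, hence convergent.

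No genuine obstacle arises; the argument is just careful spectral bookkeeping, once the key integrability $\int_0^\infty x^{-1}\,de_V(x)<\infty$ is extracted from the hypothesis. The one point worth mentioning explicitly is the identification of the Dirichlet form with $\tfrac12\langle D\varphi, a\,D\varphi\rangle$ on $H^1(\Omega)$, which is a standard consequence of Lemma~\ref{lem:proEn} and integration by parts on $\Omega$ (see e.g.\ \cite[Chapter~9]{komorowski2012fluctuations}).
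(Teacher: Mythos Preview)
Your proof is correct and follows essentially the same route as the paper: both arguments reduce the hypothesis to the spectral condition $\int_0^\infty x^{-1}\,de_V(x)<\infty$ and then conclude via dominated convergence for $\lambda\langle\varphi_\lambda,\varphi_\lambda\rangle$ and for the Dirichlet form of $\varphi_\lambda-\varphi_\mu$. The only cosmetic difference is that the paper deduces the spectral integrability from the monotonicity of $s\mapsto\int_0^s\langle e^{uL}V,V\rangle\,du$, whereas you compute the explicit kernel $\frac{tx-1+e^{-tx}}{tx^2}$ and apply monotone convergence directly; your explicit mention of $e_V(\{0\})=0$ (via ergodicity) is a useful detail that the paper leaves implicit.
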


\begin{proof}
First, by the calculation in Lemma~\ref{l:vbdd}, we have
\begin{equation}
\E\E_B\{\left(\frac{1}{t^\frac12}\int_0^tV(\omega_s) \, ds\right)^2\}=\frac{2}{t}\int_0^t\int_0^s\langle e^{uL}V,V\rangle \, duds.
\end{equation}
Since $\int_0^s\langle e^{uL}V,V\rangle \, du$ is non-decreasing as a function of $s$, the l.h.s.\ of the above display being bounded is equivalent with $\int_0^\infty\langle e^{uL}V,V\rangle \, du<\infty$, i.e. $\langle V,(-L)^{-1}V\rangle<\infty$. Let $U(d\xi)$ be the projection valued measure associated with $-L$, i.e., $-L=\int_0^\infty \xi \, U(d\xi)$, and $\nu(d\xi)$ be the spectral measure associated with $V$, i.e. $\nu(d\xi)=\langle U(d\xi) V,V\rangle$. The fact that $\langle V,(-L)^{-1}V\rangle<\infty$ is equivalent to
\begin{equation}
\int_0^\infty \frac{1}{\xi} \, \nu(d\xi)<\infty.
\end{equation}
It follows that
\begin{equation}
\lambda\langle\varphi_\lambda,\varphi_\lambda\rangle=\int_0^\infty \frac{\lambda}{(\lambda+\xi)^2} \, \nu(d\xi)\to 0
\end{equation}
as $\lambda\to 0$ by the dominated convergence theorem. By the uniform ellipticity, we have
\begin{equation}
\langle D(\varphi_{\lambda_1}-\varphi_{\lambda_2}),D(\varphi_{\lambda_1}-\varphi_{\lambda_2})\rangle\les \langle \varphi_{\lambda_1}-\varphi_{\lambda_2},-L(\varphi_{\lambda_1}-\varphi_{\lambda_2})\rangle,
\end{equation}
and since 
\begin{equation}
\langle \varphi_{\lambda_1},-L\varphi_{\lambda_2}\rangle=\int_0^\infty \frac{\xi}{(\lambda_1+\xi)(\lambda_2+\xi)} \, \nu(d\xi)\to \int_0^\infty \frac{1}{\xi} \, \nu(d\xi)
\end{equation}
as $\lambda_1,\lambda_2\to 0$, we further obtain 
\begin{equation}
\langle D(\varphi_{\lambda_1}-\varphi_{\lambda_2}),D(\varphi_{\lambda_1}-\varphi_{\lambda_2})\rangle\to 0.
\end{equation}

The proof is complete.
\end{proof}

For $\psi_{ij}=(e_i+D\phi_{e_i})^Ta(e_j+D\phi_{e_j})-\bar{A}_{ij} $ ($ i,j=1,\ldots,d$), a polarization of the inequality in \eqref{bdpsi} ensures that
\begin{equation}
\label{fiaVar}
\E\E_B\{|\eps\int_0^{t/\eps^2}\psi_{ij}(\omega_s) \, ds|^2\}\les t,
\end{equation}
i.e., the asymptotic variance is finite, so we can apply Lemma~\ref{l:kv}: letting $\Psi_{\lambda,ij}$ be the regularized corrector associated with $\psi_{ij}$, i.e.,
\begin{equation}
(\lambda-L)\Psi_{\lambda,ij}=\psi_{ij},
\end{equation}
we have $\lambda\langle \Psi_{\lambda,ij},\Psi_{\lambda,ij}\rangle\to0$ as $\lambda\to 0$. We also have the convergence of $D_k\Psi_{\lambda,ij}$ in $L^2(\Omega)$, with the limit formally written as $D_k\Psi_{ij}:=\lim_{\lambda\to 0}D_k\Psi_{\lambda,ij}$.

Let $D\Psi_{ij}=(D_1\Psi_{ij},\ldots,D_d\Psi_{ij})$, then the constant $c_{ijk}$ for $i,j,k=1,\ldots,d$ is given by
\begin{equation}
\label{defB}
c_{ijk}:=\frac12\E\{(D\Psi_{ij})^Ta(e_k+D\phi_{e_k})\}.
\end{equation}

\begin{lem}
$c_{ijk}=0$ for $i,j,k=1,\ldots,d$.
\end{lem}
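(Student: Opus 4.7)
The plan is to test the corrector equation for $\phi_{\lambda,e_k}$ against the regularized second-order corrector $\Psi_{\lambda,ij}$, and then pass to the limit $\lambda\to0$. At the regularized level, $\phi_{\lambda,e_k}\in H^1(\Omega)$ satisfies $(\lambda-L)\phi_{\lambda,e_k}=e_k\cdot b$, and pairing this equation with $\Psi_{\lambda,ij}\in H^1(\Omega)$ in $L^2(\Omega)$ gives, after using the anti-self-adjointness of $D_j$ on $L^2(\Omega)$ (stemming from stationarity), the identity
\begin{equation*}
\lambda\,\la \phi_{\lambda,e_k},\Psi_{\lambda,ij}\ra+\tfrac12\,\E\Ll\{(D\Psi_{\lambda,ij})^Ta\,(e_k+D\phi_{\lambda,e_k})\Rr\}=0,
\end{equation*}
where the right-hand side $\la e_k\cdot b,\Psi_{\lambda,ij}\ra=\tfrac12\la\sum_j D_j a_{jk},\Psi_{\lambda,ij}\ra=-\tfrac12\,\E\{(D\Psi_{\lambda,ij})^Ta\,e_k\}$ has been absorbed into the second term.

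The second step is to show that the second term converges to $2c_{ijk}$ as $\lambda\to0$. By Theorem~\ref{t:bdCo}, $D\phi_{\lambda,e_k}\to D\phi_{e_k}$ in $L^2(\Omega)$, and by Lemma~\ref{l:kv} applied to $V=\psi_{ij}$ (whose asymptotic variance is finite by \eqref{fiaVar}), $D\Psi_{\lambda,ij}\to D\Psi_{ij}$ in $L^2(\Omega)$. Combined with the uniform bound on $a$, this convergence is enough to pass to the limit in the bilinear expectation and recover $\tfrac12\,\E\{(D\Psi_{ij})^Ta\,(e_k+D\phi_{e_k})\}=c_{ijk}$.

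The third step is to show that the first term, $\lambda\,\la\phi_{\lambda,e_k},\Psi_{\lambda,ij}\ra$, vanishes in the limit. By Cauchy--Schwarz,
\begin{equation*}
|\lambda\,\la\phi_{\lambda,e_k},\Psi_{\lambda,ij}\ra|\le \sqrt{\lambda\,\|\phi_{\lambda,e_k}\|^2}\cdot\sqrt{\lambda\,\|\Psi_{\lambda,ij}\|^2},
\end{equation*}
and both factors tend to $0$: the first because Lemma~\ref{l:kv} applies to $V=e_k\cdot b$ (whose asymptotic variance is finite, as this underlies the martingale CLT for $\eps X^\omega_{t/\eps^2}$), and the second by the statement of Lemma~\ref{l:kv} applied to $V=\psi_{ij}$. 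Combining the three steps yields $c_{ijk}=0$.

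The only genuine obstacle is that $\Psi_{\lambda,ij}$ is known to converge only modulo constants (its gradient converges in $L^2$, but $\Psi_{\lambda,ij}$ itself need not), so one cannot simply test the limiting corrector equation against a putative $\Psi_{ij}$; everything must be done at the $\lambda>0$ level, where both $\phi_{\lambda,e_k}$ and $\Psi_{\lambda,ij}$ are bona fide $L^2$ objects, and the closure is obtained from the vanishing of $\lambda\|\cdot\|^2$ for both regularized correctors supplied by Lemma~\ref{l:kv}.
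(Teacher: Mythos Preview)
Your proof is correct and is essentially the same as the paper's: both test the regularized corrector equation $(\lambda-L)\phi_{\lambda,e_k}=e_k\cdot b$ against $\Psi_{\lambda,ij}$, integrate by parts to obtain $\tfrac12\E\{(D\Psi_{\lambda,ij})^Ta(e_k+D\phi_{\lambda,e_k})\}=\pm\lambda\langle\Psi_{\lambda,ij},\phi_{\lambda,e_k}\rangle$, and then send $\lambda\to0$. The only cosmetic difference is that for the vanishing of $\lambda\langle\Psi_{\lambda,ij},\phi_{\lambda,e_k}\rangle$ you invoke Lemma~\ref{l:kv} for both factors via Cauchy--Schwarz, whereas the paper relies (implicitly) on the uniform $L^2$ bound on $\phi_{\lambda,e_k}$ from Theorem~\ref{t:bdCo} together with $\lambda\|\Psi_{\lambda,ij}\|^2\to0$; either route works.
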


\begin{proof}
By the $L^2$ convergence of $D\Psi_{\lambda,ij}\to D\Psi_{ij}$ and $D\phi_{\lambda,e_k}\to D\phi_{e_k}$, we have
\begin{equation}
c_{ijk}=\lim_{\lambda\to 0}\frac12\E\{(D\Psi_{\lambda,ij})^Ta(e_k+D\phi_{\lambda,e_k})\}.
\end{equation}
An integration by parts leads to
\begin{equation}
\frac12\E\{(D\Psi_{\lambda,ij})^Ta(e_k+D\phi_{\lambda,e_k})\}=\langle \Psi_{\lambda,ij}, \frac12\sum_{m,n=1}^d D_m(a_{mn}D_n\phi_{\lambda,e_k})+\frac12\sum_{m=1}^dD_ma_{mk}\rangle.
\end{equation}
The r.h.s.\ of the above display can be rewritten as $\langle \Psi_{\lambda,ij},L\phi_{\lambda,e_k}+e_k\cdot b\rangle$, and by recalling the equation satisfied by the regularized corrector \eqref{corEqP}, we have
\begin{equation}
\frac12\E\{(D\Psi_{\lambda,ij})^Ta(e_k+D\phi_{\lambda,e_k})\}=
\langle \Psi_{\lambda,ij},\lambda\phi_{\lambda,e_k}\rangle,
\end{equation}
which goes to zero as $\lambda\to 0$. The proof is complete.
\end{proof}

To refine the estimation of $\cE_3$, we need a more accurate estimation of $\E_B\{e^{iM_t^\eps}\}-e^{-\frac12\sigma_\xi^2 t}$ compared with the one obtained by Proposition~\ref{p:2ndQM}. This is given by the following quantitative martingale central limit theorem.

\begin{prop}\cite[Proposition~3.2]{gu2014fluctuations}
\label{prop:3rdQM}
If $M_t$ is a continuous martingale and $\langle M\rangle_t$ is its predictable quadratic variation, $W_t$ is a standard Brownian motion, then for any $f\in \mathcal{C}_b(\R)$ with up to third order bounded and continuous derivatives, we have
\begin{equation*}
|\E\{f(M_t)-f(\sigma W_t)-\frac12f''(M_\tau)(\langle M\rangle_t-\sigma^2 t)\}|\leq C\|f'''\|_\infty\E\{|\langle M\rangle_t-\sigma^2t |^{\frac32}\},
\end{equation*}
where $\tau=\sup\{s\in [0,t]|\langle M\rangle_s\leq \sigma^2t\}$, $\|f'''\|_\infty$ denotes the supreme bound of $f'''$, and $C$ is some universal constant.
\end{prop}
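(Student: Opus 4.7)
The plan is to reduce the continuous martingale $M$ to a time-changed Brownian motion via Dambis--Dubins--Schwarz (DDS), and then run an It\^o--Taylor expansion whose remainder involves only the Lipschitz modulus of $f''$, which is precisely $\|f'''\|_\infty$.

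First, after enlarging the probability space if needed, obtain a Brownian motion $(B_s)_{s\ge 0}$ with $M_t = B_{\langle M\rangle_t}$, and couple $W$ so that $\sigma W_t = B_{\sigma^2 t}$. Writing $U:=\langle M\rangle_t$ and $V:=\sigma^2 t$, the definition of $\tau$ is designed exactly so that $\langle M\rangle_\tau = U\wedge V$, hence $M_\tau = B_{U\wedge V}$. Set $U':=U\wedge V$, $V':=U\vee V$ (both stopping times of $B$'s filtration $\mathcal{G}$), and $\eps_0:=\mathrm{sgn}(U-V)$. A direct case analysis shows that the integrand factors as
\begin{equation*}
f(B_U) - f(B_V) - \tfrac12 f''(B_{U'})(U-V) = \eps_0\bigl[f(B_{V'}) - f(B_{U'}) - \tfrac12 f''(B_{U'})(V'-U')\bigr].
\end{equation*}

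Second, apply It\^o's formula to $f(B)$ between the stopping times $U'\le V'$:
\begin{equation*}
f(B_{V'})-f(B_{U'})-\tfrac12 f''(B_{U'})(V'-U') = \int_{U'}^{V'} f'(B_s)\,dB_s + \tfrac12\int_{U'}^{V'}[f''(B_s)-f''(B_{U'})]\,ds.
\end{equation*}
Multiplying by $\eps_0$ and taking expectation, the stochastic integral vanishes by optional stopping: $f'$ is bounded so $\int_0^\cdot f'(B_r)\,dB_r$ is a true $L^2$ martingale, and crucially $\eps_0\in\mathcal{G}_{U'}$, which permits pulling $\eps_0$ inside the conditional expectation $\E[\int_{U'}^{V'} f'(B_s)\,dB_s\,|\,\mathcal{G}_{U'}]=0$. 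The $\mathcal{G}_{U'}$-measurability follows from $V$ being deterministic combined with the general identity $\{U\le V\}\in\mathcal{G}_{U\wedge V}$ for stopping times; I expect this to be the only truly subtle point in the argument, and I would verify it carefully at the start. The remaining Riemann integral is bounded pathwise by $\|f'''\|_\infty\int_{U'}^{V'}|B_s-B_{U'}|\,ds$ via the mean-value theorem.

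Finally, set $T:=V'-U'=|\langle M\rangle_t-\sigma^2 t|$, which is a stopping time for the shifted Brownian motion $\tilde B_r:=B_{U'+r}-B_{U'}$ (the shifted-filtration check $\{V'\le U'+r\}\in\mathcal{G}_{U'+r}$ is standard). Then
\begin{equation*}
\E\Bigl[\int_0^T |\tilde B_r|\,dr\Bigr] \le \E\bigl[T\,\sup_{r\le T}|\tilde B_r|\bigr] \le \E[T^{3/2}]^{2/3}\,\E\bigl[\sup_{r\le T}|\tilde B_r|^3\bigr]^{1/3} \le C\,\E[T^{3/2}],
\end{equation*}
using H\"older with conjugate exponents $(3/2,3)$ together with the Burkholder--Davis--Gundy estimate $\E[\sup_{r\le T}|\tilde B_r|^3]\le C\,\E[T^{3/2}]$. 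Substituting back $T=|\langle M\rangle_t-\sigma^2 t|$ delivers the claim. Beyond the $\mathcal{G}_{U'}$-measurability of $\eps_0$, the rest is routine It\^o calculus and standard stopped-Brownian-motion moment inequalities.
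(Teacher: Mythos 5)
Your proof is correct and follows essentially the same route as the cited reference \cite[Proposition~3.2]{gu2014fluctuations}: a Dambis--Dubins--Schwarz time change writes $M_t=B_{\langle M\rangle_t}$ and $\sigma W_t=B_{\sigma^2 t}$, the definition of $\tau$ is precisely what identifies $M_\tau$ with $B_{U\wedge V}$ so that the sign $\mathrm{sgn}(U-V)$ is $\mathcal{G}_{U\wedge V}$-measurable and the stochastic-integral term dies by optional stopping, and the Riemann remainder is handled by the mean value theorem together with BDG and H\"older at the stopping time $|U-V|$. All the delicate measurability checks you flag ($\{U\le V\}\in\mathcal{G}_{U\wedge V}$, $V'-U'$ being a stopping time for the shifted filtration, the strong Markov property making $\tilde B$ a $\tilde{\mathcal{G}}$-Brownian motion) are resolved exactly as you indicate, so there is no gap.
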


\begin{rem}
\label{}
In the discrete-space setting, the corresponding martingales have jumps, and we do not know how to adapt Proposition~\ref{prop:3rdQM} and the subsequent argument to recover Theorem~\ref{t:main} in this case.
\end{rem}

By the above proposition, we have for almost every $\omega\in\Omega$ that
\begin{equation}
|\E_B\{e^{iM_t^\eps}\}-e^{-\frac12\sigma_\xi^2 t}+\frac12\E_B\{e^{iM_\tau^\eps}(\langle M^\eps\rangle_t-\sigma_\xi^2t)\}|\leq C\E_B\{|\langle M^\eps\rangle_t-\sigma_\xi^2t|^\frac32\},
\end{equation}
where 
\begin{equation}
\tau=\sup\{s\in [0,t]: \sum_{j=1}^d \eps^2\int_0^{s/\eps^2}\left(\sum_{i=1}^d (D_i\phi_\xi(\omega_u)+\xi_i)\sigma_{ij}(\omega_u)\right)^2 \, du\le \sigma_\xi^2 t\}.
\end{equation}
Combining with \eqref{bdpsi}, we obtain
\begin{equation}
\label{esM11}
\E\{|\cE_3-\cE_4|\}\les \int_{\R^d}|\hat{f}(\xi)|\E\E_B\{|\langle M^\eps\rangle_t-\sigma_\xi^2t|^\frac32\} \, d\xi \les \eps^\frac32t^\frac34
\end{equation}
for
\begin{equation}
\begin{aligned}
\cE_4:=&-\frac{1}{2(2\pi)^d}\int_{\R^d}\hat{f}(\xi)e^{i\xi\cdot x}\E_B\{e^{iM_\tau^\eps}(\langle M^\eps\rangle_t-\sigma_\xi^2t)\} \, d\xi\\
=&-\frac{1}{2(2\pi)^d}\int_{\R^d}\hat{f}(\xi)e^{i\xi\cdot x}\E_B\{e^{iM_\tau^\eps}\eps^2\int_0^{t/\eps^2}\psi_\xi(\omega_s)ds\} \, d\xi.
\end{aligned}
\end{equation}

Define 
\begin{equation}
\cE_5:=-\frac{1}{2(2\pi)^d}\int_{\R^d}\hat{f}(\xi)e^{i\xi\cdot x}\E_B\{e^{iM_t^\eps}\eps^2\int_0^{t/\eps^2}\psi_\xi(\omega_s)ds\} \, d\xi.
\end{equation}
The following Lemmas~\ref{l:esM11} and \ref{l:esM12} combine with \eqref{esM11} to complete the proof of Proposition~\ref{p:conM}.

\begin{lem}
\label{l:esM11}
$\E\{|\cE_4-\cE_5|\}\les \eps^\frac32t^\frac34$.
\end{lem}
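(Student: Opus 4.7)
The plan is to exploit the fact that $\cE_4$ and $\cE_5$ differ only by replacing $e^{iM_\tau^\eps}$ with $e^{iM_t^\eps}$, and then bound the increment $|e^{iM_t^\eps}-e^{iM_\tau^\eps}|$ by $|M_t^\eps - M_\tau^\eps|$. First I would write
\begin{equation*}
|\cE_4-\cE_5|\les\int_{\R^d}|\hat f(\xi)|\,\Bigl|\E_B\Bigl\{(e^{iM_\tau^\eps}-e^{iM_t^\eps})\,\eps^2\!\!\int_0^{t/\eps^2}\!\!\psi_\xi(\omega_s)\,ds\Bigr\}\Bigr|\,d\xi,
\end{equation*}
use $|e^{ia}-e^{ib}|\le |a-b|$, and apply the Cauchy--Schwarz inequality on $(\Sigma,\PP_B)$ and then on $(\Omega,\P)$ to obtain
\begin{equation*}
\E\{|\cE_4-\cE_5|\}\les \int_{\R^d}|\hat f(\xi)|\sqrt{\E\E_B\{|M_t^\eps-M_\tau^\eps|^2\}}\sqrt{\E\E_B\Bigl\{\bigl|\eps^2\!\!\int_0^{t/\eps^2}\!\!\psi_\xi(\omega_s)\,ds\bigr|^2\Bigr\}}\,d\xi.
\end{equation*}

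The second square root is precisely what was bounded in \eqref{bdpsi}: it is $\les \eps\sqrt{t}\,|\xi|^2$. To handle the first square root, observe that $\langle M^\eps\rangle$ is continuous (because $M^\eps$ is), so $\tau$ is in fact the stopping time defined as the first hitting time of $\sigma_\xi^2 t$ by $\langle M^\eps\rangle$ (or $t$ if this level is never reached). By the optional stopping theorem and It\^o's isometry applied to the stopped continuous martingale,
\begin{equation*}
\E_B\{|M_t^\eps-M_\tau^\eps|^2\}=\E_B\{\langle M^\eps\rangle_t-\langle M^\eps\rangle_\tau\}.
\end{equation*}
The definition of $\tau$ immediately yields $|\langle M^\eps\rangle_t-\langle M^\eps\rangle_\tau|\le |\langle M^\eps\rangle_t-\sigma_\xi^2 t|$: either $\langle M^\eps\rangle_t\le \sigma_\xi^2 t$, in which case $\tau=t$ and the left-hand side vanishes, or $\langle M^\eps\rangle_\tau=\sigma_\xi^2 t$ by continuity.

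Combining these observations and using Jensen's inequality together with \eqref{bdpsi} one more time,
\begin{equation*}
\E\E_B\{|M_t^\eps-M_\tau^\eps|^2\}\le \E\E_B\{|\langle M^\eps\rangle_t-\sigma_\xi^2 t|\}\le \sqrt{\E\E_B\{|\langle M^\eps\rangle_t-\sigma_\xi^2 t|^2\}}\les \eps\sqrt{t}\,|\xi|^2.
\end{equation*}
Plugging everything back in,
\begin{equation*}
\E\{|\cE_4-\cE_5|\}\les \int_{\R^d}|\hat f(\xi)|\,\bigl(\eps\sqrt{t}\,|\xi|^2\bigr)^{1/2}\bigl(\eps\sqrt{t}\,|\xi|^2\bigr)\,d\xi\les \eps^{3/2}t^{3/4}\int_{\R^d}|\hat f(\xi)|\,|\xi|^3\,d\xi,
\end{equation*}
and the last integral is finite since $f\in\mathcal{C}_c^\infty(\R^d)$.

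The only mildly delicate point is verifying that $\tau$ is a genuine stopping time so that the It\^o isometry applies cleanly to $M_t^\eps-M_\tau^\eps$; this follows from the continuity of $\langle M^\eps\rangle$. Everything else is a pair of Cauchy--Schwarz inequalities followed by invocations of the already-established estimate \eqref{bdpsi}.
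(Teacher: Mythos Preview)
Your proof is correct and follows essentially the same route as the paper's: Cauchy--Schwarz to separate the $\psi_\xi$ integral from the martingale increment, then \eqref{bdpsi} for each factor, with the increment controlled via $|\langle M^\eps\rangle_t-\langle M^\eps\rangle_\tau|\le|\langle M^\eps\rangle_t-\sigma_\xi^2 t|$. You are in fact more explicit than the paper in justifying the identity $\E_B\{|M_t^\eps-M_\tau^\eps|^2\}=\E_B\{\langle M^\eps\rangle_t-\langle M^\eps\rangle_\tau\}$ via the stopping-time interpretation of $\tau$ (strictly speaking $\tau$ may differ from the first hitting time on a flat stretch of $\langle M^\eps\rangle$, but $M^\eps$ is constant there, so the identity is unaffected).
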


\begin{proof}
By \eqref{bdpsi}, we know $\E\E_B\{|\eps^2\int_0^{t/\eps^2}\psi_\xi(\omega_s) \, ds|^2\}\les \eps^2t|\xi|^4$, so
\begin{equation}
\E\{|\cE_4-\cE_5|\}\les \eps t^\frac12\int_{\R^d}|\hat{f}(\xi)||\xi|^2\sqrt{\E\E_B\{|M_\tau^\eps-M_t^\eps|^2\}} \, d\xi.
\end{equation}
By the definition of $\tau$, we have 
\begin{equation}
\int_{\R^d}|\hat{f}(\xi)||\xi|^2\sqrt{\E\E_B\{|M_\tau^\eps-M_t^\eps|^2\}} \, d\xi\les  \int_{\R^d}|\hat{f}(\xi)||\xi|^2\sqrt{\E\E_B\{|\sigma_\xi^2t-\langle M^\eps\rangle_t|\}} \, d\xi\les \eps^\frac12t^\frac14,
\end{equation}
so $\E\{|\cE_4-\cE_5|\}\les \eps^\frac32t^\frac34$. The proof is complete.
\end{proof}

\begin{lem}
\label{l:esM12}
\begin{equation*}
\E\{|\cE_5-\eps t\sum_{i,j,k=1}^d c_{ijk}\partial_{x_ix_jx_k}u_{\hom}(t,x)|\}\leq \eps C_\eps(t),
\end{equation*} 
with $C_\eps(t)\to 0$ as $\eps\to 0$ and $C_\eps(t)\leq C(1+t)$ for some constant $C$.
\end{lem}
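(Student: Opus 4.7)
The plan is to introduce a second layer of correctors for $\psi_{ij}$, apply It\^o's formula twice (once for the corrector reduction, once for the cross-variation with $M^\eps$), and verify that the resulting deterministic correction is exactly the expression $\eps t\sum c_{ijk}\partial_{x_ix_jx_k}u_\hom(t,x)$, which vanishes by the preceding lemma.

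First, for each $i,j$, let $\Psi_{\lambda,ij}\in L^2(\Omega)$ solve $(\lambda-L)\Psi_{\lambda,ij}=\psi_{ij}$. It\^o's formula applied to $\td\Psi_{\lambda,ij}(X_s^\omega,\omega)$ along the diffusion $X_s^\omega$ yields
\begin{equation*}
\eps^2\int_0^{t/\eps^2}\psi_{ij}(\omega_s)\,ds = \eps^2\mathcal{N}^{(\lambda,ij)}_{t/\eps^2} + \eps^2\bigl[\Psi_{\lambda,ij}(\omega_0)-\Psi_{\lambda,ij}(\omega_{t/\eps^2})\bigr] + \lambda\eps^2\int_0^{t/\eps^2}\Psi_{\lambda,ij}(\omega_s)\,ds,
\end{equation*}
where $\mathcal{N}^{(\lambda,ij)}$ is a continuous $B$-martingale with integrand $\sum_m D_m\Psi_{\lambda,ij}(\omega_s)\sigma_{m\cdot}(\omega_s)$. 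Choosing $\lambda$ to vanish with $\eps$ (say $\lambda=\eps^{2\alpha}$ for a small $\alpha>0$), the bound $\lambda\|\Psi_{\lambda,ij}\|^2\to 0$ from Lemma \ref{l:kv} (applied to $\psi_{ij}$ via \eqref{fiaVar}) together with Cauchy-Schwarz shows that the last three terms contribute $o(\eps)$ to $\cE_5$ in $L^1(\Omega)$, with at most linear growth in $t$.

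For the martingale piece, It\^o applied to the product $e^{iM^\eps_\cdot}\cdot\eps^2\mathcal{N}^{(\lambda,ij)}_\cdot$ gives
\begin{equation*}
\E_B\{e^{iM_t^\eps}\eps^2\mathcal{N}^{(\lambda,ij)}_{t/\eps^2}\} = i\eps\int_0^t\E_B\{e^{iM_s^\eps}H_{ij,\xi}(\omega_{s/\eps^2})\}\,ds + R^\eps_{ij}(t),
\end{equation*}
where $H_{ij,\xi}(\omega):=(\xi+D\phi_\xi(\omega))^T a(\omega)\,D\Psi_{\lambda,ij}(\omega)$ arises from $d\la M^\eps,\eps^2\mathcal{N}^{(\lambda,ij)}\ra_s=\eps H_{ij,\xi}(\omega_{s/\eps^2})\,ds$ after using $\sigma\sigma^T=a$, and $R^\eps_{ij}=-\tfrac12\E_B\int_0^t e^{iM_s^\eps}\eps^2\mathcal{N}^{(\lambda,ij)}_{s/\eps^2}\,d\la M^\eps\ra_s$ is a remainder handled by bootstrapping the size estimate on $\E_B\{e^{iM_s^\eps}\eps^2\mathcal{N}^{(\lambda,ij)}_{s/\eps^2}\}$ itself. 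Splitting $H_{ij,\xi}=\bar H_{ij,\xi}+\td H_{ij,\xi}$ with $\bar H_{ij,\xi}:=\E\{H_{ij,\xi}\}=2\sum_k\xi_k c^\lambda_{ijk}$ and $c^\lambda_{ijk}\to c_{ijk}$, combining the mean contribution with Proposition \ref{p:2ndQM} (to replace $\E_B\{e^{iM_s^\eps}\}$ by $e^{-\sigma_\xi^2 s/2}$ up to an $o(1)$ error in $\eps$) identifies the deterministic part, after undoing the Fourier integral, as $\eps t\sum_{i,j,k}c_{ijk}\partial_{x_ix_jx_k}u_\hom(t,x)$ modulo $o(\eps)$; this expression is identically zero by the preceding lemma.

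It remains to bound the fluctuating contribution
\begin{equation*}
\eps\int_0^t\E\E_B\{e^{iM_s^\eps}\td H_{ij,\xi}(\omega_{s/\eps^2})\}\,ds
\end{equation*}
by $o(\eps)$. This is the main obstacle: it calls for variance-decay estimates of $\E_B\{\td H_{ij,\xi}(\omega_\cdot)\}$ in time, parallel to Propositions \ref{p:corQu} and \ref{p:vdpsi} for $\psi_\xi$. The decorrelation and moment bounds on $D\Psi_{\lambda,ij}$ required (uniformly in $\lambda\le 1$) follow from applying the spectral-gap covariance estimate \eqref{covEs2} and the Green function gradient bounds of Theorem \ref{t:bdGr} to the PDE satisfied by $\td\Psi_{\lambda,ij}$; these are technically more delicate than for $\phi_\xi$ because $\psi_{ij}$ itself involves $D\phi_{\lambda,e_k}$, producing a nested corrector structure.
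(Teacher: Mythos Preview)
Your corrector decomposition of $\eps^2\int_0^{t/\eps^2}\psi_{ij}(\omega_s)\,ds$ via $\Psi_{\lambda,ij}$ matches the paper's first step, but from there the approaches diverge, and your route leaves a genuine gap.

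The paper does \emph{not} apply a second It\^o expansion to $e^{iM^\eps}\cdot \eps^2\mathcal{N}^{(\lambda,ij)}$. Instead, after writing $\eps\int_0^{t/\eps^2}\psi_\xi(\omega_s)\,ds=\mathcal{R}_t^\eps+\mathcal{M}_t^\eps$, it handles $\E_B\{e^{iM_t^\eps}\mathcal{M}_t^\eps\}$ by a \emph{joint} martingale CLT: ergodicity of $(\omega_s)$ forces the bracket matrix of $(M^\eps,\mathcal{M}^\eps)$ to converge almost surely, so $(M_t^\eps,\mathcal{M}_t^\eps)\Rightarrow(N_1,N_2)$ jointly Gaussian, and then $\E_B\{e^{iM_t^\eps}\mathcal{M}_t^\eps\}\to \E\{e^{iN_1}N_2\}=ie^{-\sigma_\xi^2 t/2}\,t\sum_{j}\langle f_j,g_j\rangle$, which is exactly $c_\xi$. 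This argument is entirely soft: the only inputs on the second corrector are $\lambda\langle\Psi_{\lambda,\xi},\Psi_{\lambda,\xi}\rangle\to0$ and $D\Psi_{\lambda,\xi}\to D\Psi_\xi$ in $L^2(\Omega)$, both of which follow from the abstract Kipnis--Varadhan Lemma~\ref{l:kv} once the asymptotic variance of $\psi_\xi$ is finite.

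Your approach, by contrast, requires showing
\[
\int_0^t\E\bigl|\E_B\{e^{iM_s^\eps}\td H_{ij,\xi}(\omega_{s/\eps^2})\}\bigr|\,ds\to 0,
\]
and you correctly identify this as ``the main obstacle''. But you do not close it: you assert that the needed decorrelation and uniform moment bounds on $D\Psi_{\lambda,ij}$ ``follow from'' the spectral-gap estimate and Theorem~\ref{t:bdGr}. They do not follow from what is in the paper. The equation for $\td\Psi_{\lambda,ij}$ has right-hand side $\psi_{ij}$, which is \emph{not} in divergence form with bounded local coefficients (it is quadratic in $D\phi$), so the perturbation argument used for $\phi_{\lambda,\xi}$ in Proposition~\ref{p:corQu} does not transfer directly; one would need separate control on high moments of $D\Psi_{\lambda,ij}$ and on its sensitivity $D\Psi_{\lambda,ij}(\omega)-D\Psi_{\lambda,ij}(\omega_k)$, neither of which is established. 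Remark~\ref{r:exponent-improvement} is explicit that sharper decay on the second corrector ``would require more work'' and that the paper ``preferred to present a simpler argument here''. Your proposal reopens exactly the difficulty the paper's soft CLT argument was designed to avoid.

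A secondary issue: the remainder $R^\eps_{ij}=-\tfrac12\E_B\int_0^t e^{iM_s^\eps}Y_s\,d\langle M^\eps\rangle_s$ is, at leading order, of size $\eps t$ (since $Y_s$ is $O(\eps)$ and $d\langle M^\eps\rangle_s\sim\sigma_\xi^2\,ds$), i.e.\ the same order as the term you are trying to identify. ``Bootstrapping'' here really means solving the linear integral equation $g(t)=i\eps\int_0^t(\cdots)\,ds-\tfrac{\sigma_\xi^2}{2}\int_0^t g(s)\,ds+\text{errors}$ for $g(t)=\E_B\{e^{iM_t^\eps}Y_t\}$; this can be made to work, but it must be carried out, and the ``errors'' (from $d\langle M^\eps\rangle_s-\sigma_\xi^2\,ds$) again feed back into quantities you have not controlled.
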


\begin{proof}
We write
\begin{equation}
\frac{\cE_5}{\eps}=-\frac{1}{2(2\pi)^d}\int_{\R^d}\hat{f}(\xi)e^{i\xi\cdot x}\E_B\{e^{iM_t^\eps}\eps\int_0^{t/\eps^2}\psi_\xi(\omega_s) \, ds\} \, d\xi,
\end{equation}
where $\eps\int_0^{t/\eps^2}\psi_\xi(\omega_s) \, ds$ is of central limit scaling. To apply the Kipnis-Varadhan method, the only condition we need to check is the finiteness of the asymptotic variance, and this is already given by \eqref{bdpsi}, i.e. we have
\begin{equation}
\E\E_B\{|\eps\int_0^{t/\eps^2}\psi_\xi(\omega_s) \, ds|^2\}\les t|\xi|^4.
\end{equation}
Therefore, we can write $\eps\int_0^{t/\eps^2}\psi_\xi(\omega_s) \, ds=\mathcal{R}_t^\eps+\mathcal{M}_t^\eps$ with  
\begin{equation}
\begin{aligned}
\mathcal{R}_t^\eps=&\eps\int_0^{t/\eps^2}\lambda\Psi_{\lambda,\xi}(\omega_s) \, ds-\eps\Psi_{\lambda,\xi}(\omega_{t/\eps^2})+\eps\Psi_{\lambda,\xi}(\omega_0)\\
&+\sum_{j=1}^d \eps\int_0^{t/\eps^2}\sum_{i=1}^d (D_i\Psi_{\lambda,\xi}(\omega_s)-D_i\Psi_\xi(\omega_s))\sigma_{ij}(\omega_s) \, dB_s^j,
\end{aligned}
\end{equation}
and
\begin{equation}
\mathcal{M}_t^\eps=\sum_{j=1}^d \eps\int_0^{t/\eps^2}\sum_{i=1}^d D_i\Psi_\xi(\omega_s)\sigma_{ij}(\omega_s) \, dB_s^j.
\end{equation}
Recall that the formally-written random variable $D_i\Psi_\xi$ is the $L^2$-limit of $D_i\Psi_{\lambda,\xi}$ as $\lambda\to 0$, with $\Psi_{\lambda,\xi}$ solving the regularized corrector equation 
\begin{equation}
(\lambda-L)\Psi_{\lambda,\xi}=\psi_\xi.
\end{equation}
Since $\psi_\xi=\sum_{i,j=1}^d\xi_i\xi_j\psi_{ij}$, by linearity we have $\Psi_{\lambda,\xi}=\sum_{i,j=1}^d \xi_i\xi_j\Psi_{\lambda,ij}$, with $\Psi_{\lambda,ij}$ solving
\begin{equation}
(\lambda-L)\Psi_{\lambda,ij}=\psi_{ij}.
\end{equation}

Now we can write
\begin{equation}
\frac{\cE_5}{\eps}=-\frac{1}{2(2\pi)^d}\int_{\R^d}\hat{f}(\xi)e^{i\xi\cdot x}\E_B\{e^{iM_t^\eps}(\mathcal{R}_t^\eps+\mathcal{M}_t^\eps)\}d\xi.
\end{equation}

First, by choosing $\lambda=\eps^2$ and using the stationarity of $\omega_s$ we have

\begin{equation}
\E\E_B\{|\eps\int_0^{t/\eps^2}\lambda\Psi_{\lambda,\xi}(\omega_s) \, ds-\eps\Psi_{\lambda,\xi}(\omega_{t/\eps^2})+\eps\Psi_{\lambda,\xi}(\omega_0)|^2\}\les \lambda\langle\Psi_{\lambda,\xi},\Psi_{\lambda,\xi}\rangle(1+t^2).
\end{equation}
For the stochastic integral, we have
\begin{equation}
\begin{aligned}
&\E\E_B\{|\sum_{j=1}^d \eps\int_0^{t/\eps^2}\sum_{i=1}^d (D_i\Psi_{\lambda,\xi}(\omega_s)-D_i\Psi_\xi(\omega_s))\sigma_{ij}(\omega_s) \, dB_s^j|^2\}\\
=&\E\E_B\{\sum_{j=1}^d \eps^2\int_0^{t/\eps^2}(\sum_{i=1}^d (D_i\Psi_{\lambda,\xi}(\omega_s)-D_i\Psi_\xi(\omega_s))\sigma_{ij}(\omega_s))^2 \, ds\}\\
\les & \sum_{i=1}^d\langle D_i\Psi_{\lambda,\xi}-D_i\Psi_\xi,D_i\Psi_{\lambda,\xi}-D_i\Psi_\xi\rangle t.
\end{aligned}
\end{equation}
Therefore,
\begin{equation}
\begin{aligned}
\E\E_B\{|\mathcal{R}_t^\eps|^2\}\les& \lambda\langle \Psi_{\lambda,\xi},\Psi_{\lambda,\xi}\rangle(1+t^2)\\
&+\sum_{i=1}^d\langle D_i\Psi_{\lambda,\xi}-D_i\Psi_\xi,D_i\Psi_{\lambda,\xi}-D_i\Psi_\xi\rangle t.
\end{aligned}
\end{equation}
By Lemma~\ref{l:kv}, $\lambda\langle\Psi_{\lambda,\xi},\Psi_{\lambda,\xi}\rangle\to0$ as $\lambda\to 0$, and $D_i\Psi_{\lambda,\xi}\to D_i\Psi_\xi$ in $L^2(\Omega)$, so we derive
\begin{equation}
\label{conRM}
\int_{\R^d}|\hat{f}(\xi)| \E\E_B\{|\mathcal{R}_t^\eps|\} \, d\xi\leq C_\eps(1+t)
\end{equation}
with $C_\eps\to 0$ as $\eps\to 0$.

Secondly, for the martingale part $\E_B\{e^{iM_t^\eps}\mathcal{M}_t^\eps\}$, it is clear that $M_t^\eps$ and $\mathcal{M}_t^\eps$ are written as $\sum_{j=1}^d\eps\int_0^{t/\eps^2}f_j(\omega_s) \, dB_s^j$ and $\sum_{j=1}^d\eps\int_0^{t/\eps^2}g_j(\omega_s) \, dB_s^j$ for some $f_j,g_j\in L^2(\Omega)$ respectively. We claim that for fixed $\xi\in \R^d,t>0$
\begin{equation}
\label{con2M}
\E\{|\E_B\{e^{iM_t^\eps}\mathcal{M}_t^\eps\}-c_\xi|\}\to 0
\end{equation}
for some constant $c_\xi$.

Recall that $\omega_s$ depends on $\eps$ through the initial condition $\omega_0=\tau_{-x/\eps}\omega$. By stationarity we can shift the environment $\omega$ by an amount of $x/\eps$ without changing the value of $\E\{|\E_B\{e^{iM_t^\eps}\mathcal{M}_t^\eps\}-c_\xi|\}$. So we can assume $\omega_s=\tau_{X_s^\omega}\omega$ with $X_0^\omega=0$.

For almost every $\omega\in\Omega$, by ergodicity we have
\begin{eqnarray*}
\sum_{j=1}^d \eps^2\int_0^{t/\eps^2}f_j^2(\omega_s) \, ds\to t\sum_{j=1}^d \langle f_j,f_j\rangle\\
\sum_{j=1}^d \eps^2\int_0^{t/\eps^2}g_j^2(\omega_s) \, ds\to t\sum_{j=1}^d \langle g_j,g_j\rangle\\
\sum_{j=1}^d \eps^2\int_0^{t/\eps^2}f_j(\omega_s)g_j(\omega_s) \, ds\to t\sum_{j=1}^d \langle f_j,g_j\rangle
\end{eqnarray*}
almost surely in $\Sigma$. Thus by a martingale central limit theorem \cite[page 339, Theorem~1.4]{ethier2009markov}, we have that for almost every $\omega\in\Omega$,
\begin{equation}
(M_t^\eps,\mathcal{M}_t^\eps)\Rightarrow (N_1,N_2)
\end{equation}
in distribution in $\Sigma$, where $(N_1,N_2)$ is a Gaussian vector with mean zero and whose covariance matrix is determined by $\E\{N_1^2\}=t\sum_{j=1}^d \langle f_j,f_j\rangle$, $\E\{N_2^2\}=t\sum_{j=1}^d \langle g_j,g_j\rangle$, and $\E\{N_1N_2\}=t\sum_{j=1}^d \langle f_j,g_j\rangle$. 

Now let $g_K(x)=(x\wedge K)\vee (-K)$ be a continuous and bounded cutoff function for $K>0$, and $h_K(x)=x-g_K(x)$ we have 
\begin{equation}
\E_B\{e^{iM_t^\eps}\mathcal{M}_t^\eps\}=\E_B\{e^{iM_t^\eps}g_K(\mathcal{M}_t^\eps)\}+\E_B\{e^{iM_t^\eps}h_K(\mathcal{M}_t^\eps)\}
\end{equation}
It is clear that $\E\E_B\{|\mathcal{M}_t^\eps|^2\}\les t|\xi|^4$, so 
\begin{equation}
\E\E_B\{|h_K(\mathcal{M}_t^\eps)|\}\leq \E\E_B\{|\mathcal{M}_t^\eps|1_{|\mathcal{M}_t^\eps|\geq K}\}\leq \frac{1}{K}\E\E_B\{|\mathcal{M}_t^\eps|^2\}\les \frac{t|\xi|^4}{K}.
\end{equation}
Therefore, 
\begin{equation}
\begin{aligned}
\limsup_{\eps\to 0}\E\{|\E_B\{e^{iM_t^\eps}\mathcal{M}_t^\eps\}-\E\{e^{iN_1}N_2\}|\}\leq& \lim_{\eps\to 0}\E\{|\E_B\{e^{iM_t^\eps}g_K(\mathcal{M}_t^\eps)\}-\E\{e^{iN_1}g_K(N_2)\}|\}\\
&+|\E\{e^{iN_1}h_K(N_2)\}|+\E\E_B\{|e^{iM_t^\eps}h_K(\mathcal{M}_t^\eps)|\}|\\
\les&|\E\{e^{iN_1}h_K(N_2)\}|+\frac{t|\xi|^4}{K}.
\end{aligned}
\end{equation}
Letting $K\to \infty$, \eqref{con2M} is proved for $c_\xi=\E\{e^{iN_1}N_2\}$. 


For the constant $c_\xi$, we have
\begin{equation}
c_\xi=ie^{-\frac12\E\{N_1^2\}}\E\{N_1N_2\}
\end{equation}
(this can be easily seen by differentiating the formula for $\E\{e^{i N_1 + i\zeta N_2}\}$ with respect to $\zeta$).
Recall that $f_j = \sum_{i=1}^d (D_i \phi_\xi + \xi_i) \sigma_{ij}$ and $g_j = \sum_{i = 1}^d D_i \Psi_{\xi} \sigma_{ij}$. After some calculation, we obtain
\begin{equation}
\sum_{j=1}^d\langle f_j,g_j\rangle=\sum_{i,j,k=1}^d \xi_i\xi_j\xi_k\E\{(D\Psi_{ij})^Ta(e_k+D\phi_{e_k})\},
\end{equation}
so, recalling \eqref{defB},
\begin{equation}
c_\xi=2ie^{-\frac12\sigma_\xi^2t}t\sum_{i,j,k=1}^d \xi_i\xi_j\xi_k c_{ijk}.
\end{equation}

By the above expression of $c_\xi$ and the fact that $\E\E_B\{|\mathcal{M}_t^\eps|^2\}\les t|\xi|^4$, we have
\begin{equation}
\E\{|\E_B\{e^{iM_t^\eps}\mathcal{M}_t^\eps\}-c_\xi|\}\les t^\frac12|\xi|^2+t|\xi|^3,
\end{equation}
so applying the dominated convergence theorem, we conclude that for $t>0$
\begin{equation}
\label{conMM}
\int_{\R^d}|\hat{f}(\xi)|\E\{|\E_B\{e^{iM_t^\eps}\mathcal{M}_t^\eps\}-c_\xi|\} \, d\xi \to 0
\end{equation}
as $\eps \to 0$.

To summarize, by combining \eqref{conRM} and \eqref{conMM} we have proved
\begin{equation}
\E\{|\frac{\cE_5}{\eps}+\frac{1}{2(2\pi)^d}\int_{\R^d}\hat{f}(\xi)e^{i\xi\cdot x}c_\xi \, d\xi|\}\to 0
\end{equation} 
as $\eps\to 0$, and the following bound holds
\begin{equation}
\E\{|\frac{\cE_5}{\eps}+\frac{1}{2(2\pi)^d}\int_{\R^d}\hat{f}(\xi)e^{i\xi\cdot x}c_\xi \, d\xi|\}\leq C(1+t)
\end{equation}
for some constant $C>0$ independent of $(t,x)$.

Now we only need to note that
\begin{equation}
\begin{aligned}
&-\frac{1}{2(2\pi)^d}\int_{\R^d}\hat{f}(\xi)e^{i\xi\cdot x}c_\xi \, d\xi\\
=&{t}\sum_{i,j,k=1}^d c_{ijk}\frac{1}{(2\pi)^d}\int_{\R^d}\hat{f}(\xi)(-i) \xi_i\xi_j\xi_ke^{i\xi\cdot x}e^{-\frac12\sigma_\xi^2t} \, d\xi\\
=&t\sum_{i,j,k=1}^dc_{ijk}\partial_{x_ix_jx_k}u_{\hom}(t,x)
\end{aligned}
\end{equation}
to complete the proof.
\end{proof}
\begin{rem}
\label{r:exponent-improvement}
From the proof above, we see that in order to estimate the rate of convergence to $0$ of $\E\{|C_\eps|\}$ in Theorem~\ref{t:main}, the rates of convergence of $\lambda\langle\Psi_{\lambda,\xi},\Psi_{\lambda,\xi} \rangle$ to $0$ and of $D\Psi_{\lambda,\xi}$ to $D\Psi_\xi$ as $\lambda \to 0$ need to be quantified. This in turn could be obtained by reinforcing Proposition~\ref{p:vdpsi} to 
\begin{equation}
\label{e.decay}
\E\{|\E_B\{\psi_\xi(\omega_t)\}|^2\} \lesssim t^{-\gamma},
\end{equation}
for some $\gamma > 1$. More precisely, spectral computations similar to those of \cite{mourrat2011variance} show that \eqref{e.decay} implies
$$
\lambda\langle\Psi_{\lambda,\xi},\Psi_{\lambda,\xi} \rangle \lesssim \lambda^{(\gamma-1) \wedge 1},
$$
and the same estimate for $\E\{|D \Psi_{\lambda,\xi}-D \Psi_\xi|^2\}$.
It was shown in \cite[Theorem~2.1]{gloria2011optimal} that the spatial averages of $\psi_\xi$ behave as if $\psi_\xi$ was a local function of the coefficient field. If $\psi_\xi$ is replaced by a truly local function, then the methods of \cite{gloria2013quantification} show that \eqref{e.decay} holds with $\gamma = d/2$. For our actual function $\psi_\xi$, it is thus natural to expect \eqref{e.decay} to hold at least for every $\gamma < d/2$, but a proof of this stronger result would require more work, so we preferred to present a simpler argument here.
\end{rem}

\section{Results on elliptic equations}
\label{s:ell}

The solutions to elliptic equations can be written as
\begin{eqnarray}
U_\eps(x,\omega)=\int_0^\infty e^{-t}u_\eps(t,x,\omega) \, dt\\
U_{\hom}(x)=\int_0^\infty e^{-t}u_{\hom}(t,x) \, dt.
\end{eqnarray}
Recall the error decomposition for fixed $(t,x)$ in the parabolic case
\begin{equation}
u_\eps(t,x,\omega)-u_{\hom}(t,x)
=\eps \nabla u_{\hom}(t,x)\cdot \phi(\tau_{-x/\eps}\omega)+\eps C_\eps(t,x),
\end{equation}
where $C_\eps(t,x)\to 0$ in $L^1(\Omega)$. By Propositions~\ref{p:conR} and \ref{p:conM}, we actually have
\begin{equation}
\E\{|C_\eps(t,x)|\}\leq C(1+t)
\end{equation}
for some constant $C>0$, so by the dominated convergence theorem 
\begin{equation}
\int_0^\infty e^{-t}\E\{|C_\eps(t,x)|\} \, dt\to 0
\end{equation}
as $\eps\to0$. Therefore, we obtain the error decomposition for fixed $x$ in the elliptic case
\begin{equation}
\label{errEll}
U_\eps(x,\omega)-U_{\hom}(x)
=
\eps \int_0^\infty e^{-t}\nabla u_{\hom}(t,x) \, dt\cdot \phi(\tau_{-x/\eps}\omega)+\eps \tilde{C}_\eps(x)
\end{equation}
with $\tilde{C}_\eps(x)\to 0$ in $L^1(\Omega)$.

The first term on the r.h.s.\ of \eqref{errEll} gives
\begin{equation}
\eps \int_0^\infty e^{-t}\nabla u_{\hom}(t,x) \, dt\cdot \phi(\tau_{-x/\eps}\omega)=\eps\nabla U_{\hom}(x)\cdot \phi(\tau_{-x/\eps}\omega),
\end{equation}
which completes the proof of Theorem~\ref{t:mainE}.

\section{Results for periodic coefficients}
\label{s:per}

It is natural to ask whether the same result holds for periodic rather than random coefficients. To understand the first order errors in periodic homogenization is a classical problem, however the \emph{pointwise} expansion proved in this paper does not seem to be known. Our approach applies with some minor modifications, which we now briefly discuss.

The existence of a ``stationary'' corrector now becomes trivial. We assume the coefficient $\tilde{a}(x)$ is defined on the $d-$dimensional torus $\mathbb{T}$, and by the fact that $\tilde{b}=(\tilde{b}_1,\ldots,\tilde{b}_d)$ with $\tilde{b}_i=\frac12\sum_{j=1}^d \partial_{x_j}\tilde{a}_{ji}$, we have
\[
\int_{\mathbb{T}} \tilde{b}(x)dx=0.
\]
By the Fredholm alternative, the corrector equation
\[
-\frac12\nabla\cdot \tilde{a}(x)\nabla \tilde{\phi}_\xi=\xi\cdot \tilde{b}
\]
has a unique solution satisfying $\int_{\mathbb{T}} \tilde{\phi}_\xi(x)dx=0$. The same discussion applies to $\tilde{\psi}_\xi=(\xi+\nabla\tilde{\phi}_\xi)^T\tilde{a}(\xi+\nabla\tilde{\phi}_\xi)-\xi^T\bar{A}\xi$ since $\int_{\mathbb{T}} \tilde{\psi}_\xi(x)dx=0$, that is, there exists a unique $\tilde{\Psi}_\xi$ solving
\[
-\frac12\nabla\cdot \tilde{a}(x)\nabla \tilde{\Psi}_\xi=\tilde{\psi}_\xi
\]
such that $\int_{\mathbb{T}} \tilde{\Psi}_\xi(x)dx=0$. Since we assume $\tilde{a}$ to be H\"older regular, the functions $\tilde{\phi}_\xi, \nabla \tilde{\phi}_\xi$ and $\tilde{\Psi}_\xi$ are bounded in $x$ (see \cite[Theorem~3.13]{han1997elliptic}).

Our estimates of variance decay in Propositions~\ref{p:vdphi} and \ref{p:vdpsi} can be replaced by a spectral gap inequality in the periodic setting. For the diffusion on the torus given by 
\[
dX_t=\tilde{b}(X_t)dt+\tilde{\sigma}(X_t)dB_t, 
\]
the Lebegue measure on $\mathbb{T}$ is the unique invariant measure and the following estimate holds \cite[Page 373, Theorem 3.2]{papanicolau1978asymptotic}: 
\begin{equation}
\sup_{X_0\in \mathbb{T}}|\E_B\{ g(X_t)\}|\les e^{-\rho t} \sup_{x\in \mathbb{T}}|g(x)|,
\label{eq:sgper}
\end{equation}
 for some $\rho>0$, provided $\int_{\mathbb{T}} g(x)dx=0$. This enables to replace the estimates of Propositions~\ref{p:vdphi} and \ref{p:vdpsi} by exponential bounds.
%
%
 
With the above two points in mind, we apply the same arguments to derive a result similar to Theorem~\ref{t:main}: for every fixed $(t,x)$, 
\[
 u_\eps(t,x)-u_{\hom}(t,x)
=  \eps \nabla u_{\hom}(t,x)\cdot \tilde{\phi}(\frac{x}{\eps})+o(\eps)
\]
where $\tilde{\phi}=(\tilde{\phi}_{e_1},\ldots,\tilde{\phi}_{e_d})$.

\section*{Acknowledgment}  
YG would like to thank his PhD advisor Prof. Guillaume Bal for many helpful discussions on the subject.

\appendix

\section{Estimating convolution of powers}

\begin{lem}
\label{l:conL}
When $d\geq 3$, for any $x\in \R^d$, 
\begin{eqnarray}
\label{conE1}
\sum_{k\in \Z^d}(1\wedge \frac{1}{|k|^{d-1}})(1\wedge \frac{1}{|x-k|^{d-1}})\les 1\wedge \frac{1}{|x|^{d-2}},\\
\label{conE2}
\sum_{k\in \Z^d}(1\wedge \frac{1}{|k|^{d}})(1\wedge \frac{1}{|x-k|^{d}})\les 1\wedge \frac{\log(2+|x|)}{|x|^{d}}.
\end{eqnarray}
\end{lem}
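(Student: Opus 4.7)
Both inequalities are standard convolution bounds for power-type kernels; I would prove them by passing to the continuous analogue and then doing a three-region decomposition.

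The first step is to compare the sum with the associated integral. Since $y \mapsto 1 \wedge |y|^{-\alpha}$ is comparable (up to universal multiplicative constants) to a constant on each unit cube $k+[0,1)^d$ with $|k| \ge 2$, one has
\[
\sum_{k \in \Z^d} (1 \wedge |k|^{-\alpha})(1 \wedge |x-k|^{-\alpha}) \les 1 + \int_{\R^d} (1 \wedge |y|^{-\alpha})(1 \wedge |x-y|^{-\alpha}) \, dy,
\]
where the leading $1$ absorbs the $O(1)$-many terms coming from cubes that meet the unit balls around $0$ or around a lattice point nearest $x$. It therefore suffices to bound the integral; the case $|x| \le 2$ reduces to showing that the integral is finite uniformly in $x$, which is a special case of the computation below with $|x|$ replaced by a bounded constant.

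For $|x| \ge 2$ I would split $\R^d = A \cup B \cup C$ with $A = \{|y| \le |x|/2\}$, $B = \{|x-y| \le |x|/2\}$ and $C$ the complement. On $A$ the second factor is at most $|x|^{-\alpha}$, and the remaining integral reduces in polar coordinates to $\int_0^{|x|/2}(1 \wedge r^{-\alpha}) r^{d-1}\,dr$; by symmetry $B$ contributes the same. On $C$ I would split further into $\{|x|/2 \le |y| \le 2|x|\}$, where $|y| \asymp |x|$ so that after pulling out $|x|^{-\alpha}$ and changing variables $z = x - y$ the integral is of the same type as on $A$, and $\{|y| \ge 2|x|\}$, where $|x-y| \asymp |y|$ so that the integrand is dominated by $|y|^{-2\alpha}$ and is integrable at infinity precisely because $2\alpha > d$ for $d \ge 3$ and $\alpha \in \{d-1, d\}$.

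Plugging in the two values of $\alpha$ then yields the two claimed bounds. For $\alpha = d-1$ region $A$ gives $|x|^{-(d-1)} \cdot |x| \asymp |x|^{-(d-2)}$, and each piece of $C$ gives the same. For $\alpha = d$ the critical radial integral $\int_1^{|x|/2} r^{-d} r^{d-1}\,dr = \log(|x|/2)$ in region $A$ is exactly where the logarithmic correction comes from, while the subregions of $C$ contribute $|x|^{-d}$ without a logarithm. There is no genuine obstacle here; the only point needing a bit of care is the sum-to-integral comparison near the lattice points $0$ and the one closest to $x$, but those regions contribute only finitely many terms of bounded size which are absorbed by the constant on the right-hand side.
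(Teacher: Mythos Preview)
Your three-region decomposition is precisely the paper's approach; the paper simply works directly with the lattice sum (splitting $\Z^d$ into $\{|k|\le |x|,\ |k|\le |x-k|\}$, its reflection, and the far region $\{|k|>|x|,\ |x-k|>|x|\}$) rather than first passing to an integral, but the core computation is the same.

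There is, however, a real gap in your sum-to-integral step. The displayed bound
\[
\sum_{k}\ \cdots\ \les\ 1 + \int_{\R^d}\ \cdots
\]
is true but too crude for large $|x|$: after bounding the integral you only get $\text{sum}\les 1 + |x|^{-(d-2)}$, whereas the claim is $\text{sum}\les |x|^{-(d-2)}$. Saying the exceptional lattice terms are ``absorbed by the constant on the right-hand side'' does not work when that right-hand side is $1\wedge |x|^{-(d-2)}\to 0$. The fix is easy: for the finitely many $k$ with $|k|\le 2$ one has $(1\wedge|k|^{-\alpha})\le 1$ while $(1\wedge|x-k|^{-\alpha})\les |x|^{-\alpha}$, and symmetrically for the $k$ closest to $x$; hence the exceptional terms already satisfy the target bound $|x|^{-\alpha}\le |x|^{-(d-2)}$ and the additive $1$ can be replaced by an additive $|x|^{-\alpha}$. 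Alternatively, skip the integral comparison entirely and run the three-region argument directly on the sum, as the paper does --- nothing is gained by passing to the continuum.
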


\begin{proof}
The proofs of \eqref{conE1} and \eqref{conE2} are similar, so we only consider \eqref{conE1}.

First, for $|x|>100$, we divide $\Z^d$ into three regions, $(I)=\{k\in \Z^d: |k|\leq |x|,|k|\leq |x-k|\}$, $(II)=\{k\in \Z^d: |k-x|\leq |x|,|k|>|x-k|\}$, $(III)=\{k\in \Z^d:|k|>|x|,|k-x|>|x|\}$. Then it is clear that in $(I)$, we have $|x-k|\geq |x|/2$, so
\begin{equation}
\sum_{k\in (I)}(1\wedge \frac{1}{|k|^{d-1}})(1\wedge \frac{1}{|x-k|^{d-1}})\les \sum_{|k|\leq |x|}(1\wedge \frac{1}{|k|^{d-1}}) \frac{1}{|x|^{d-1}}\les \frac{1}{|x|^{d-2}}.
\end{equation}
Similarly, in $(II)$ we have $|k|\geq |x|/2$, so
\begin{equation}
\sum_{k\in (II)}(1\wedge \frac{1}{|k|^{d-1}})(1\wedge \frac{1}{|x-k|^{d-1}})\les \sum_{|k-x|\leq |x|}(1\wedge \frac{1}{|x-k|^{d-1}})\frac{1}{|x|^{d-1}}\les \frac{1}{|x|^{d-2}}.
\end{equation}
In $(III)$, $|x-k|\geq |k|/2$, so
\begin{equation}
\sum_{k\in (III)}(1\wedge \frac{1}{|k|^{d-1}})(1\wedge \frac{1}{|x-k|^{d-1}})\les \sum_{|k|\geq |x|}\frac{1}{|k|^{2d-2}}\les \frac{1}{|x|^{d-2}}.
\end{equation}

Now for $|x|\leq 100$, it is clear that the summation is bounded since $d\geq 3$, so the proof of \eqref{conE1} is complete.
\end{proof}

\def\cprime{$'$}


\begin{thebibliography}{99}



\bibitem{armstrong2} 
{\sc S.N.\ Armstrong, C.K.\ Smart}, {\em Quantitative stochastic homogenization of convex integral functionals},  Annales scientifiques de l'Ecole normale sup\'erieure, to appear.

\bibitem{armstrong3}
{\sc S.N.\ Armstrong, J.-C.\ Mourrat}, {\em Lipschitz regularity for elliptic equations with random coefficients}, Archive for Rational Mechanics and Analysis, to appear.

\bibitem{armstrong4}
{\sc S.N.\ Armstrong, T.\ Kuusi, J.-C.\ Mourrat}, {\em Mesoscopic higher regularity and subadditivity in elliptic homogenization}, Preprint, arXiv:1507.06935 (2015).

\bibitem{bal2008central}
{\sc G.~Bal}, {\em Central limits and homogenization in random media},
  Multiscale Modeling \& Simulation, 7 (2008), pp.~677--702.

\bibitem{bal2010homogenization}
{\sc G.~Bal}, {\em Homogenization with large spatial random potential},
  Multiscale Modeling \& Simulation, 8 (2010), pp.~1484--1510.

\bibitem{bal2013review}
{\sc G.~Bal and Y.~Gu}, {\em Limiting models for equation with large random
  potential; a review}, Communication in Mathematical Sciences, 13 (2015), pp.~729--748.
  
    \bibitem{papanicolau1978asymptotic}
{\sc A.~Bensoussan, , J.-L. Lions, and G.~Papanicolau}, {\em Asymptotic
  Analysis for Periodic Structures}, Elsevier, 1978.

\bibitem{biskup2014central}
{\sc M.~Biskup, M.~Salvi, and T.~Wolff}, {\em A central limit theorem for the
  effective conductance: Linear boundary data and small ellipticity contrasts},
  Communications in Mathematical Physics, 328 (2014), pp.~701--731.

\bibitem{burton19931}
{\sc R.~M. Burton, M.~Goulet and R.~Meester}, {\em On 1-dependent
  processes and $ k $-block factors}, Annals of Probability, 21 (1993),
  pp.~2157--2168.

\bibitem{caffarelli2010rates}
{\sc L.~A. Caffarelli and P.~E. Souganidis}, {\em Rates of convergence for the
  homogenization of fully nonlinear uniformly elliptic pde in random media},
  Inventiones Mathematicae, 180 (2010), pp.~301--360.

\bibitem{conlon}
{\sc J.\ Conlon and A.\ Fahim}, {\em Strong convergence to the homogenized limit of parabolic equations with random coefficients}, Transactions of the AMS, 367 (2015), pp.~3041--3093.

\bibitem{cemracs} 
{\sc A.-C.\ Egloffe, A.\ Gloria, J.-C.\ Mourrat, T.N.\ Nguyen}, {\em Random walk in random environment, corrector equation, and homogenized coefficients: from theory to numerics, back and forth}, IMA Journal of Numerical Analysis, 35 (2015), pp.~499--545.

\bibitem{ethier2009markov}
{\sc S.~N. Ethier and T.~G. Kurtz}, {\em Markov processes: characterization and
  convergence}, Wiley, 1986.

\bibitem{figari1982mean}
{\sc R.~Figari, E.~Orlandi, and G.~Papanicolaou}, {\em Mean field and gaussian
  approximation for partial differential equations with random coefficients},
  SIAM Journal on Applied Mathematics, 42 (1982), pp.~1069--1077.

\bibitem{gloria2014improved}
{\sc A.~Gloria and D.~Marahrens}, {\em Annealed estimates on the Green functions and uncertainty quantification}, Preprint, arXiv:1409.0569 (2014).

\bibitem{gloria2013quantification}
{\sc A.~Gloria, S.~Neukamm, and F.~Otto}, {\em Quantification of ergodicity in
  stochastic homogenization: optimal bounds via spectral gap on glauber
  dynamics}, Inventiones Mathematicae,  (2013), pp.~1--61.

\bibitem{gloria2014optimal}
\leavevmode\vrule height 2pt depth -1.6pt width 23pt, {\em An optimal
  quantitative two-scale expansion in stochastic homogenization of discrete
  elliptic equations}, ESAIM: Mathematical Modelling and Numerical Analysis, 48
  (2014), pp.~325--346.
  
  \bibitem{gno-reg}
\leavevmode\vrule height 2pt depth -1.6pt width 23pt, {\em A regularity theory for random elliptic operators}, Preprint, arXiv:1409.2678 (2015).

\bibitem{gloria2011optimal}
{\sc A.~Gloria and F.~Otto}, {\em An optimal variance estimate in stochastic
  homogenization of discrete elliptic equations}, Annals of Probability, 39
  (2011), pp.~779--856.

\bibitem{gloria2012optimal}
\leavevmode\vrule height 2pt depth -1.6pt width 23pt, {\em An optimal error
  estimate in stochastic homogenization of discrete elliptic equations}, The
  Annals of Applied Probability, 22 (2012), pp.~1--28.

\bibitem{gloria2014quantitative}
\leavevmode\vrule height 2pt depth -1.6pt width 23pt, {\em Quantitative results
  on the corrector equation in stochastic homogenization}, Journal of the European Mathematical Society, to appear.

\bibitem{gu2014fluctuations}
{\sc Y.~Gu and G.~Bal}, {\em Fluctuations of parabolic equations with large
  random potentials}, SPDEs: Analysis and Computations, 3 (2015), pp.~1--51.

\bibitem{gu-mourrat}
{\sc Y.~Gu and J.-C.\ Mourrat}, {\em Scaling limit of fluctuations in stochastic homogenization}, Preprint, arXiv:1503.00578 (2015).

\bibitem{han1997elliptic}
{\sc Q.~Han and F.~Lin}, {\em Elliptic partial differential equations}, New
  York, 1997.

\bibitem{kingman} 
{\sc J.F.C.\ Kingman}, {\em Poisson processes.}
Oxford University Press, New York, 1993.

\bibitem{kipnis1986central}
{\sc C.~Kipnis and S.~Varadhan}, {\em Central limit theorem for additive
  functionals of reversible markov processes and applications to simple
  exclusions}, Communications in Mathematical Physics, 104 (1986), pp.~1--19.

\bibitem{komorowski2012fluctuations}
{\sc T.~Komorowski, C.~Landim, and S.~Olla}, {\em Fluctuations in Markov
  processes: time symmetry and martingale approximation}, vol.~345, Springer,
  2012.

\bibitem{kozlov1979averaging}
{\sc S.~M. Kozlov}, {\em Averaging of random operators}, Matematicheskii
  Sbornik, 151 (1979), pp.~188--202.

\bibitem{marahrens2013annealed}
{\sc D.~Marahrens and F.~Otto}, {\em Annealed estimates on the Green's
  function}, Probability Theory and Related Fields, to appear.

\bibitem{mourrat2011variance}
{\sc J.-C. Mourrat}, {\em Variance decay for functionals of the environment
  viewed by the particle}, Annales de l'Institut H. Poincar{\'e} Probabilités et Statistiques, 47
  (2011), pp.~294--327.

\bibitem{mourrat2012berry}
\leavevmode\vrule height 2pt depth -1.6pt width 23pt, {\em A quantitative central limit theorem for the random walk among random conductances}, Electronic Journal of Probability 17, no. 97, pp.~1--17 (2012).

\bibitem{mourrat2012kantorovich}
\leavevmode\vrule height 2pt depth -1.6pt width 23pt, {\em Kantorovich distance
  in the martingale CLT and quantitative homogenization of parabolic equations
  with random coefficients}, Probability Theory Related Fields 160 (1-2), pp.~279-314 (2014).

\bibitem{mourrat2014CLT}
{\sc J.-C. Mourrat and J.~Nolen}, {\em A Scaling limit of the corrector in stochastic homogenization}, preprint, arXiv:1502.07440 (2015).

\bibitem{mourrat2014correlation}
{\sc J.-C. Mourrat and F.~Otto}, {\em Correlation structure of the corrector in
  stochastic homogenization}, Annals of Probability, to appear.

\bibitem{nolen2011normal}
{\sc J.~Nolen}, {\em Normal approximation for a random elliptic equation},
  Probability Theory and Related Fields,  (2011), pp.~1--40.

\bibitem{papanicolaou1979boundary}
{\sc G.~C. Papanicolaou and S.~R.~S. Varadhan}, {\em Boundary value problems
  with rapidly oscillating random coefficients}, in Random fields, Vol. I, II
  (Esztergom, 1979), Colloq. Math. Soc. J{\'a}nos Bolyai, 27, North Holland,
  Amsterdam, New York, 1981, pp.~835--873.

\bibitem{rossignol}
{\sc R.\ Rossignol}, {\em Noise-stability and central limit theorems for effective resistance of random electric networks}, Annals of Probability, to appear.

\bibitem{yurinskii1986averaging}
{\sc V.~Yurinskii}, {\em Averaging of symmetric diffusion in random medium},
  Siberian Mathematical Journal, 27 (1986), pp.~603--613.

\end{thebibliography}
\end{document}